\documentclass{article}
\usepackage[a4paper,top=3cm,bottom=3cm,left=2cm,right=2cm]{geometry}
\usepackage[bookmarks=true, bookmarksopen=true, bookmarksopenlevel=5]{hyperref}
\usepackage{authblk}

\usepackage[english]{babel}
\usepackage{amssymb}
\usepackage{amscd}
\usepackage{amsfonts}
\usepackage{algpseudocode}
\usepackage{algorithm} 
\usepackage{graphicx}
\usepackage{caption}
\usepackage{amsthm} 
\usepackage{amsmath}
\usepackage{microtype}
\usepackage[subrefformat=parens, labelfont=up]{subcaption} 
\usepackage{adjustbox} 
\usepackage{float} 
\usepackage{mathtools}
\usepackage{booktabs}
\usepackage{multirow}
\usepackage{pgfplotstable}
\usepackage{siunitx}
\usepackage{tikz}
\usepackage{pgfplots}
\usepackage[customcolors,norndcorners]{hf-tikz}
\pgfplotsset{compat=newest}
\usepackage{csquotes}
\usepackage[backend=bibtex,style=numeric,indexing,maxbibnames=99,sorting=none,url=false,doi=false]{biblatex} 
\numberwithin{equation}{section}

\DeclareMathOperator{\sspan}{span}
\DeclareMathOperator{\supp}{supp}

\theoremstyle{definition}
\newtheorem{definition}{Definition}
\theoremstyle{plain}

\theoremstyle{plain}

\theoremstyle{plain}

\theoremstyle{plain}
\newtheorem{theorem}{Theorem}
\theoremstyle{definition}
\newtheorem{assumption}{Assumption}
\theoremstyle{remark}
\newtheorem{remark}{Remark}
\theoremstyle{definition}

\newcommand{\ptca}{s}
\newcommand{\ptcb}{r}

\newcommand{\ndof}{N_{\mathrm{dof}}}

\newcommand{\nptc}{N_{\mathrm{patch}}}
\newcommand{\nadj}{N_{\mathrm{adj}}}
\newcommand{\nint}{N_{\mathrm{int}}}

\newcommand{\Xhath}[1]{\,\hspace{-1pt}\widehat{X}^{#1}_h\hspace{-1pt}\,}
\newcommand{\Xhathb}[1]{\,\hspace{-1pt}\widehat{X}^{#1}_{h,0}\hspace{-1pt}\,}

\newcommand{\pad}{\mathbf{P}_{\mathrm{ad}}}
\newcommand{\pdg}{\mathbf{P}_{\mathrm{dg}}}
\newcommand{\logLogSlopeTriangle}[5]
{
	
	\pgfplotsextra
	{
		\pgfkeysgetvalue{/pgfplots/xmin}{\xmin}
		\pgfkeysgetvalue{/pgfplots/xmax}{\xmax}
		\pgfkeysgetvalue{/pgfplots/ymin}{\ymin}
		\pgfkeysgetvalue{/pgfplots/ymax}{\ymax}
		
		\pgfmathsetmacro{\xArel}{#1}
		\pgfmathsetmacro{\yArel}{#3}
		\pgfmathsetmacro{\xBrel}{#1-#2}
		\pgfmathsetmacro{\yBrel}{\yArel}
		\pgfmathsetmacro{\xCrel}{\xArel}
		
		\pgfmathsetmacro{\lnxB}{\xmin*(1-(#1-#2))+\xmax*(#1-#2)} 
		\pgfmathsetmacro{\lnxA}{\xmin*(1-#1)+\xmax*#1} 
		\pgfmathsetmacro{\lnyA}{\ymin*(1-#3)+\ymax*#3} 
		\pgfmathsetmacro{\lnyC}{\lnyA+#4*(\lnxA-\lnxB)}
		\pgfmathsetmacro{\yCrel}{\lnyC-\ymin)/(\ymax-\ymin)} 
		
		\coordinate (A) at (rel axis cs:\xArel,\yArel);
		\coordinate (B) at (rel axis cs:\xBrel,\yBrel);
		\coordinate (C) at (rel axis cs:\xCrel,\yCrel);
		
		\draw[#5]   (A)-- node[pos=0.5,anchor=north] {1}
		(B)-- 
		(C)-- node[pos=0.5,anchor=west] {#4}
		cycle;
	}
}

\begin{document}
\date{}
\title{Isogeometric multi-patch preconditioner for time-dependent electromagnetic problems
}
\author[1]{Bernard Kapidani}
\author[2]{Gabriele Loli}
\author[2,3]{Giancarlo Sangalli}
\author[2,3]{Mattia Tani}
\author[3,4,5]{Rafael V\'azquez}
\affil[1]{\'Ecole Polytechnique F\'ed\'erale Lausanne, Department of Mathematics,  CH-1015 Lausanne, Switzerland}
\affil[2]{Universit\`a di Pavia, Dipartimento di Matematica ``F. Casorati'', Via A. Ferrata 1, 27100 Pavia, Italy}
\affil[3]{Istituto di Matematica Applicata e Tecnologie Informatiche ``E.\ Magenes'' del CNR, Via Ferrata 5, 27100 Pavia, Italy}
\affil[4]{Universidade de Santiago de Compostela, Department of Applied Mathematics, R\'ua Lope G\'omez de Marzoa, s/n, 15782, Santiago de Compostela, Spain}

\affil[5]{Galician Centre for Mathematical Research and Technology, R\'ua de Constantino Candeira, s/n, 15782, Santiago de Compostela, Spain}

\maketitle
\begin{abstract}
We build upon recent developments in high-order spline-based geometric methods for the three-dimensional initial boundary value problem of Maxwell's equations. Previous schemes were limited to single-patch geometries. To address the computational challenges associated with inverting mass matrices in multi-patch settings, we extend existing techniques for 0-forms mass and stiffness matrices to 1-forms mass matrices.
Our approach leverages the tensor-product structure of spline spaces to construct a spectrally equivalent preconditioner for the 1-forms mass matrices. Specifically, we design an efficient preconditioner for single-patch domains that is robust with respect to both mesh size and spline degree. In the multi-patch case, this methodology is extended by integrating the single-patch preconditioner with an additive Schwarz approach, ensuring robustness and scalability with respect to the number of patches.
We analyze the computational complexity of the proposed preconditioner and validate its robustness and efficiency through comprehensive numerical experiments.
\vskip 1mm
\noindent
\textbf{Keywords:}  Maxwell equations, Differential forms, Preconditioning, Geometric method, Splines, Isogeometric analysis, Multi-patch.
\end{abstract}
\section{Introduction} \label{sec:intro}
We are interested in the numerical solution of the initial boundary value problem for Maxwell's equations on a bounded space-time domain $\Omega\times [0, T]$, with $\Omega\subset\mathbb{R}^3$. We follow the notation of exterior calculus (see~\cite{kapidaniHighOrderGeometric2023}) and we write the continuous system of equations in terms of differential forms 
\begin{align} 
& \mathrm{d}\mathrm{H} =  \partial_t \mathrm{D} + \mathrm{J}, \label{eq:ampere}\\
& \mathrm{d}\mathrm{E} = -\partial_t \mathrm{B}, \label{eq:faraday}\\
& \mathrm{d}\mathrm{D} = \rho, \label{eq:divgaugeD} \\
& \mathrm{d}\mathrm{B} = 0, \label{eq:divgaugeB}
\end{align}
where the differential 1-forms $\mathrm{E}$ and $\mathrm{H}$ are respectively the electric and the magnetic field, $\mathrm{D}$ and $\mathrm{B}$ are respectively the electric displacement and the magnetic induction, given as differential 2-forms. Furthermore, $\mathrm{J}$ is a 2-form representing the electric current and $\rho$ is a 3-form representing the electric charge density, both of which act as source terms for the equations. Finally, $\mathrm{d}$ is the exterior derivative, for which a more precise definition will be given in Section~\ref{sec:diffforms}. 

Because the exterior derivative satisfies the nilpotency property $\mathrm{d}^2 = 0$, taking the exterior derivative of \eqref{eq:ampere} and substituting \eqref{eq:divgaugeD} yields the charge continuity equation:
\begin{equation} \label{eq:continuity}
	\partial_t \rho + \mathrm{d}\mathrm{J} = 0.
\end{equation}
This serves as a strict mathematical compatibility condition that the source terms must satisfy.

Equations \eqref{eq:ampere}--\eqref{eq:divgaugeB} have to be completed with constitutive laws, that take the form
\begin{equation} \label{eq:cont_hodge1}
	\begin{aligned}
		& \mathrm{B} = \star^1_\mu \mathrm{H},
		& \mathrm{D} = \star^1_\epsilon \mathrm{E}.
	\end{aligned}
\end{equation}
The $\star^1$ symbol represents the Hodge star operator, which maps 1--forms into 2--forms, that above incorporates the coefficients $\mu$, the magnetic permeability, and $\epsilon$, the electric permittivity, both assumed to be uniformly positive and bounded scalar fields. Again, a more precise definition will be given in Section~\ref{sec:diffforms}.

The framework of exterior calculus admits a natural discretization within isogeometric analysis (IGA), a computational methodology introduced in \cite{Hughes_Cottrell_Bazilevs,IGA-book} as an evolution of the finite element method, in which the unknown fields are represented using splines, NURBS, and their extensions, as the geometric parametrizations used in computer-aided design. A discrete de Rham complex for tensor-product B-splines was first introduced and analyzed in \cite{Buffa_Sangalli_Vazquez,BRSV11}, leading to applications in Galerkin-based discretizations of Maxwell's equations \cite{ratnani,Corno20161}, including those relevant to plasma physics \cite{kraus_kormann_morrison_sonnendrucker_2017,Holderied_2021}. Furthermore, this framework has enabled the development of pointwise divergence-free methods for incompressible fluid simulations \cite{Buffa_deFalco_Sangalli,EvHu12,EvHu12-2,EvHu12-3,Van17}.

Early efforts to incorporate exterior calculus and differential forms into B-spline settings include \cite{back2012}, which employed a dual staggered grid, and \cite{Hiemstra20141444}, where the B-spline-based de Rham complex was adapted for mimetic discretizations in the sense of Bochev and Hyman \cite{bochevPrinciplesMimeticDiscretizations2006}. More recently, the isogeometric de Rham complex has been combined with a discontinuous Galerkin (DG) approach to ensure compatibility across conforming patches in multi-patch geometries \cite{gucluBrokenFEECFramework2022,CamposPintoSchnack2025}.

A novel method leveraging isogeometric differential forms and two dual de Rham complexes was introduced in \cite{kapidaniHighOrderGeometric2022}. Unlike conventional geometric approaches, this method circumvents the explicit construction of a dual mesh by defining the dual complex via a polynomial degree transformation. This idea was initially proposed in \cite{hiemstra_isogeometric_2011} and subsequently applied to achieve stable mortar coupling in non-conforming meshes \cite{Buffa_2020aa,kapidani2021treecotree}. The high continuity of splines ensures a well-defined exterior derivative in both sequences, represented by the incidence matrices of a Cartesian grid \cite{ratnani,BSV14}, while maintaining equal dimensionality between paired spaces. In contrast to the methods in \cite{Hiemstra20141444} and \cite{gucluBrokenFEECFramework2022}, which incorporate a discrete co-derivative (the adjoint operator of the exterior derivative), this approach discretizes the Hodge star operators \cite{HIP99c}, thereby establishing connections between the primal and dual complexes.
The convergence properties of this method were analyzed in \cite{kapidaniHighOrderGeometric2022} for elliptic problems. Moreover, in \cite{kapidaniHighOrderGeometric2023}, the authors demonstrated how to leverage this structure to achieve an efficient, high-order, explicit-in-time approximation of Maxwell's system.

In this paper, we focus on the first discretization scheme introduced in \cite{kapidaniHighOrderGeometric2023}, specifically addressing the efficient solution of linear systems arising from mass matrices in spaces of 1-forms. We propose an efficient solver that preconditions these mass matrices using techniques inspired by \cite{LOLI2022245}, applicable to both single-patch and multi-patch physical domains.

The structure of the paper is as follows. In Section~\ref{sec:diffforms}, we review key concepts of spline complexes for differential forms and introduce two discrete formulations for Maxwell's equations on multi-patch domains: one based on a continuous Galerkin discretization and another on a discontinuous Galerkin-like approach. In Section~\ref{sec:precond}, we propose a preconditioning strategy for the single-patch case and extend it to the multi-patch setting for both formulations. Section~\ref{sec:num} presents numerical results assessing the robustness and efficiency of the proposed preconditioners. Finally, Section~\ref{sec:conclusions} concludes the paper with summarizing remarks.

\section{Discrete spline differential forms and formulations}\label{sec:diffforms}
From this point onward, we focus on the three-dimensional setting, i.e., $n = 3$, and consider a domain $\Omega \subset \mathbb{R}^3$. Given a non-negative integer $k$, let $\Lambda^k(\Omega)$ denote the space of smooth differential $k$-forms. To extend this notion, we introduce the Hilbert space $L^2\Lambda^k(\Omega)$, defined as the completion of $\Lambda^k(\Omega)$ with respect to the $L^2$-inner product (see \cite{AFW06}).

The exterior derivative, denoted by $\mathrm{d}^k$, maps differential $k$-forms to $(k+1)$-forms. In the Hilbert space framework, this operator is interpreted as a densely defined, closed operator from $L^2\Lambda^k(\Omega)$ to $L^2\Lambda^{k+1}(\Omega)$ (see \cite{AFW-2}). For simplicity, when the order of the form is clear from the context, we write $\mathrm{d}$ instead of $\mathrm{d}^k$, as seen in equations \eqref{eq:ampere}--\eqref{eq:divgaugeB}. A key property of this operator is that for any differential $k$-form $\omega$, the composition satisfies $\mathrm{d} \circ \mathrm{d} \omega = 0$.

Following \cite{AFW06}, we define the Sobolev spaces of differential forms for $k = 0, \dots, 3$ as:
\begin{equation*}
	H\Lambda^k(\Omega) = \left\{ \omega \in L^2\Lambda^k(\Omega) \mid \mathrm{d}^k \omega \in L^2\Lambda^{k+1}(\Omega) \right\}.
\end{equation*}
These spaces enable the formulation of the $L^2$ de Rham complex of differential forms:
\begin{equation} \label{eq:derhamcomplex}
	\begin{CD}
		H\Lambda^0(\Omega) @>\mathrm{d}^0>> H\Lambda^1(\Omega) @>\mathrm{d}^1>> H\Lambda^2(\Omega) @>\mathrm{d}^2>> H\Lambda^3(\Omega).
	\end{CD}
\end{equation}
Moreover, we consider the de Rham complex for differential forms with vanishing boundary traces. This involves compactly supported differential $k$-forms, whose spaces are denoted with a subscript zero. They form the sequence:
\begin{equation} \label{eq:derhamcomplex_homogeneous}
	\begin{CD}
		H_0\Lambda^0(\Omega) @>\mathrm{d}^0>> H_0\Lambda^1(\Omega) @>\mathrm{d}^1>> H_0\Lambda^2(\Omega) @>\mathrm{d}^2>> H_0\Lambda^3(\Omega).
	\end{CD}
\end{equation}
If the domain $\Omega$ is enclosed by a perfect electrical conductor, the tangential component of the electric field must vanish on $\partial \Omega$. Consequently, equations \eqref{eq:ampere}--\eqref{eq:divgaugeB} must be solved for 1-forms $\mathrm{E} \in H_0 \Lambda^1(\Omega)$ and $\mathrm{H} \in H \Lambda^1(\Omega)$, as well as for 2-forms $\mathrm{B} \in H_0 \Lambda^2(\Omega)$ and $\mathrm{D} \in H \Lambda^2(\Omega)$. For numerical approximation, discrete subspaces are employed: the fields $\mathrm{E}$ and $\mathrm{B}$ belong to subspaces of \eqref{eq:derhamcomplex_homogeneous}, while $\mathrm{H}$ and $\mathrm{D}$ belong to discrete subspaces of \eqref{eq:derhamcomplex}. The system is further subject to initial conditions:
\begin{equation*}
	\mathrm{D}(\boldsymbol{x},0) = \mathrm{D}_0 (\boldsymbol{x}), \quad \mathrm{B}(\boldsymbol{x},0) = \mathrm{B}_0 (\boldsymbol{x}).
\end{equation*}

We will work with spline discretizations as standard in IGA, and we
will make the assumption that the domain is connected and described by
the union of non-singular maps of cubes, i.e., we define the physical
domain $\overline{\Omega} = \bigcup \overline{\Omega}_{\ptca} \subset \mathbb{R}^3$ through
a family of parametrizations of the form $\textbf{F}_{\ptca}: \widehat
\Omega \rightarrow \Omega_{\ptca}$, $s=1,\ldots,\nptc$, where $\widehat \Omega = (0,1)^3$
is the parametric domain. We thus relate differential
$k$-forms in the parametric domain to differential $k$-forms in the
physical domain using a set of pullback operators $\iota_{\ptca}^k: H
\Lambda^k(\Omega_{\ptca}) \rightarrow H\Lambda^k(\widehat \Omega)$. In
particular we are interested in:%
\begin{equation}\label{eq:pullback}
  \iota_{\ptca}^1 ( \cdot) = (D\textbf{F}_{\ptca})^{\top} (\cdot
  \, \circ \textbf{F}_{\ptca} ),
\end{equation}
where $D\textbf{F}_{\ptca}$ denotes the Jacobian matrix of the mapping $\textbf{F}_{\ptca}$.
An important property is that the pullback commutes both with the exterior derivative and the wedge product.
These tools allow us to define the primal and dual complex of splines, as introduced in \cite{Buffa_2020aa}. In the following section we present the primal and dual complex of splines for the three-dimensional case. We refer to \cite{Buffa_2020aa} for more details, and to \cite{kapidaniHighOrderGeometric2022} for a presentation in terms of differential forms.
\subsection{Isogeometric discrete differential forms}
\subsubsection{Single patch}
In what follows, we establish notation by reviewing the fundamentals of the tensor-product B-spline complex of isogeometric discrete differential forms, see~\cite{kapidaniHighOrderGeometric2022} for details.

Consider a continuous B-spline basis~$\mathcal{B}_p(\Xi):=\{ \widehat{B}_{i,p} \}_{i=1}^{m}$ of degree~$p$ defined on a $p$-open knot vector $\Xi$ according to the Cox-De Boor recursion formula~\cite{DeBoor}. We denote by $S_p(\Xi)$ the space spanned by this basis, which consists of piecewise polynomials of degree $p$ on $(0,1)$. Given a vector of polynomial degrees ${\mathbf{p}} = (p_1, p_2, p_3)$ and a set of open knot vectors ${\boldsymbol \Xi} = \left\{ \Xi_\ell \right\}_{\ell=1}^3$, we define the trivariate B-spline basis as
\begin{equation*}
	{\mathcal B}_{\mathbf{p}}(\boldsymbol \Xi) := \{ \widehat{B}_{\mathbf{i},p}({\widehat{\boldsymbol{x}}}) = \widehat{B}_{i_1,p_1}(\widehat{x}_1) \widehat{B}_{i_2,p_2}(\widehat{x}_2) \widehat{B}_{i_3,p_3}(\widehat{x}_3) \} \; \textup{ for ${\widehat{\boldsymbol{x}}} \in (0,1)^3$},
\end{equation*}
with the corresponding space they span as
\begin{equation*}
	S_{\mathbf{p}}(\boldsymbol \Xi) := \otimes^3_{\ell=1} S_{p_\ell}\left(\Xi_\ell\right) = \sspan ({\mathcal B}_{\mathbf{p}}(\boldsymbol \Xi)).
\end{equation*}
Following the notation in \cite{evans_hierarchical_2020}, for a spline degree $p_\ell$ and a knot vector $\Xi_\ell$, we define
\begin{equation*}
	\left( \widetilde{p}_\ell, \widetilde{\Xi}_\ell \right) \equiv 	\left( \widetilde{p}_\ell(k), \widetilde{\Xi}_\ell(k) \right) := 
	\begin{cases}
		\left( p_\ell - 1, \Xi'_\ell \right) & \textup{ if } \ell = k, \\
		\left( p_\ell , \Xi_\ell \right) & \textup{ otherwise},
	\end{cases}
\end{equation*}
where $\Xi'_{\ell}$ is obtained from $\Xi_{\ell}$ by removing the first and last repeated knots.\\
For $k=1,2,3$ we introduce 
\begin{equation*}
	S_{\mathbf{p}}(\boldsymbol \Xi;k) := \otimes^3_{\ell=1} S_{\widetilde{p}_\ell}(\widetilde{\Xi}_\ell; k)=\otimes^3_{\ell=1}\sspan (\mathcal{B}_{\widetilde{p}_\ell}(\widetilde{\Xi}_\ell; k)),
\end{equation*}
where each basis $\mathcal{B}_{\widetilde{p}_\ell}(\widetilde{\Xi}_\ell; k)$ is defined as
\begin{equation*}
	\mathcal{B}_{\widetilde{p}_\ell}(\widetilde{\Xi}_\ell; k) := 
	\begin{cases}
		\mathcal{B}_{p_\ell-1}(\Xi'_\ell) & \textup{ if } \ell = k, \\
		\mathcal{B}_{p_\ell}(\Xi_\ell) & \textup{ otherwise}.
	\end{cases}
\end{equation*}
As a basis of $S_{\mathbf{p}}(\boldsymbol \Xi;k)$ we choose
\begin{equation*}
	{\mathcal B}_{\mathbf p}(\boldsymbol \Xi; k) := \{\widehat{B}_{{\mathbf i}, {\mathbf p}}(\widehat{\boldsymbol x}) = \widehat{B}_{i_1,\widetilde{p}_1}(\widehat{x}_1) \widehat{B}_{i_2,\widetilde{p}_2}(\widehat{x}_2) \widehat{B}_{i_3,\widetilde{p}_3}(\widehat{x}_3),\, \widehat{\boldsymbol x} \in (0,1)^3: \widehat{B}_{i_\ell,\widetilde{p}_\ell} \in \mathcal{B}_{\widetilde{p}_\ell}(\widetilde{\Xi}_\ell; k) \}.
\end{equation*}
The basis of B-splines differential $1$-forms is then defined as
\begin{equation}\label{eq:basis}
	{\mathcal B}^1 := \bigcup_{k=1}^{3} \left\{ \widehat{B}_{\mathbf i,\mathbf p}(\widehat{\boldsymbol x}) \mathrm{d}\widehat{x}_{k} : \widehat{B}_{\mathbf i,\mathbf p} \in {\mathcal B}_{\mathbf p}(\boldsymbol \Xi;k) \right\}.
\end{equation}
To the multi-index $\mathbf{i}=(i_1,i_2,i_3)$ in~\eqref{eq:basis} we associate a scalar index $i$, corresponding to the lexicographical ordering of the degrees-of-freedom, given by
\begin{equation*}
	i=i_1+(i_2-1)m_{k,1} + (i_3-1)m_{k,1}m_{k,2},
\end{equation*}
where $m_{k,\ell}=\dim(S_{p_\ell}(\Xi_\ell; k))$. Thus, we rewrite
\begin{equation*}
	\mathcal{B}^1 = \bigcup_{k=1}^{3} \left\{ \widehat{B}_{i,\mathbf{p}}(\widehat{\boldsymbol x}) \mathrm{d}\widehat{x}_{k} : \widehat{B}_{i,\mathbf{p}} \in {\mathcal B}_{\mathbf p}(\boldsymbol \Xi;k) \right\}.
\end{equation*}
We denote the tensor-product spline space of isogeometric discrete differential $1$-forms by
\begin{equation*}
	\Xhath{1}(\widehat{\Omega}) := \left\{ \widehat{\omega}^h = \sum_{k=1}^{3} \widehat{\omega}^h_k \mathrm{d}\widehat{x}_{k} :  \widehat{\omega}^h_{k} \in S_{\mathbf p}(\boldsymbol \Xi;k), \ k=1,2,3 \right\} = \sspan\left({\mathcal B}^1\right).
\end{equation*}
Using the isomorphism between differential forms and vector proxy fields, we have
\begin{align*}
	\Xhath{1}(\widehat{\Omega}) &\simeq S_{\mathbf p}\left(\boldsymbol \Xi; 1\right) \times S_{\mathbf p}\left(\boldsymbol \Xi; 2\right) \times S_{\mathbf p}\left(\boldsymbol \Xi; 3\right) \\
	&= S_{p_1-1,p_2,p_3}(\Xi'_1,\Xi_2,\Xi_3) \times S_{p_1,p_2-1,p_3}(\Xi_1,\Xi'_2,\Xi_3) \times S_{p_1,p_2,p_3-1}(\Xi_1,\Xi_2,\Xi'_3).
\end{align*}
To handle spaces with vanishing boundary conditions, we define $\Xhathb{1} := \Xhath{1} \cap H_0 \Lambda^1(\Omega)$. In particular, $\Xhathb{1}$ is constructed analogously to $\Xhath{1}$, except that we remove from $\mathcal{B}_{p_\ell}(\Xi_\ell)$ the two basis functions not vanishing at the boundary of $(0,1)$, while $\mathcal{B}_{p_\ell-1}(\Xi'_\ell)$ remains unchanged, i.e.
\begin{equation}\label{eq:space_bc}
	\Xhathb{1}=\sspan\left({\mathcal B}_0^1\right),
\end{equation}
where
\begin{equation*}
	\mathcal{B}_0^1 := \bigcup_{k=1}^{3} \left\{ \widehat{B}_{i,\mathbf{p}}(\widehat{\boldsymbol x}) \mathrm{d}\widehat{x}_{k} : \widehat{B}_{i,\mathbf{p}} \in {\mathcal B}_{\mathbf{p},0}(\boldsymbol \Xi;k) \right\},
\end{equation*}
\begin{equation*}
	{\mathcal B}_{\mathbf{p},0}(\boldsymbol \Xi; k) := \{\widehat{B}_{{\mathbf i}, {\mathbf p}}(\widehat{\boldsymbol x}) = \widehat{B}_{i_1,\widetilde{p}_1}(\widehat{x}_1) \widehat{B}_{i_2,\widetilde{p}_2}(\widehat{x}_2) \widehat{B}_{i_3,\widetilde{p}_3}(\widehat{x}_3),\, \widehat{\boldsymbol x} \in (0,1)^3: \widehat{B}_{i_\ell,\widetilde{p}_\ell} \in \mathcal{B}_{\widetilde{p}_\ell,0}(\widetilde{\Xi}_\ell; k) \},
\end{equation*}
\begin{equation*}
	\mathcal{B}_{\widetilde{p}_\ell,0}(\widetilde{\Xi}_\ell; k) := 
	\begin{cases}
		\mathcal{B}_{p_\ell-1}(\Xi'_\ell) & \textup{ if } \ell = k, \\
		\mathcal{B}_{p_\ell}(\Xi_\ell)\cap H^1_0(0,1) & \textup{ otherwise}.
	\end{cases}
\end{equation*}
We assume that the physical domain $\Omega \subset \mathbb{R}^3$ is parametrized by a map $\textbf{F}: \widehat{\Omega} \rightarrow \Omega$ which is a bi-Lipschitz homeomorphism, and arbitrarily smooth when restricted to every element, as in \cite{BBSV-acta}. Then, the discrete space of isogeometric differential 1-forms in $\Omega$ is defined using the pullback operator $\iota^1(\cdot) = (D\textbf{F})^{\top} (\cdot \circ \textbf{F})$, where $D\textbf{F}$ denotes the Jacobian matrix of $\textbf{F}$. Moreover, without loss of generality, we can assume that $\textbf{F}$ is orientation-preserving, meaning the determinant of its Jacobian is strictly positive, $\det(D\textbf{F}) > 0$. Specifically, we define:
\begin{equation*}
	X_h^1(\Omega) := \left\{ \omega : \iota^1(\omega) \in \Xhath{1}(\widehat{\Omega}) \right\}, \qquad
	X_{h,0}^1(\Omega) := \left\{ \omega : \iota^1(\omega) \in \Xhathb{1}(\widehat{\Omega}) \right\}.
\end{equation*}
For more details, see~\cite{BSV14}.

Finally, following~\cite{Buffa_2020aa}, we define the dual spline space of discrete differential forms $\widetilde{X}_h^1(\Omega)$. The construction is completely analogous to that of the primal spline space of discrete differential forms ${X}_h^1(\Omega)$, with the only difference that the spline degrees are reduced by one in each parametric direction. In particular, we have
\begin{equation*}
	\widetilde{X}_h^1(\Omega):= \left\{\, \omega : \iota^1(\omega) \in \widehat{\widetilde{X^1_h}}(\widehat{\Omega}) \right\},
\end{equation*}
with
\begin{equation*}
	\widehat{\widetilde{X^1_h}}(\widehat{\Omega}) \simeq  	S_{p_1-2,p_2-1,p_3-1}\!\left(\Xi''_1,\Xi'_2,\Xi'_3\right) \times S_{p_1-1,p_2-2,p_3-1}\!\left(\Xi'_1,\Xi''_2,\Xi'_3\right) \times S_{p_1-1,p_2-1,p_3-2}\!\left(\Xi'_1,\Xi'_2,\Xi''_3\right),
\end{equation*}
where the knot vector $\Xi''_i$ is obtained from $\Xi'_i$ by removing one repetition of both the first and the last knot.

\subsubsection{Multi-patch}
In the multi-patch setting, $\Omega$ is given as the union of several disjoint patches:
\begin{equation*}
	\overline{\Omega} = \bigcup_{\ptca=1}^{\nptc} \overline{\Omega}_{\ptca},
\end{equation*}
where each patch $\Omega_{\ptca}$ is parametrized from the reference domain $\widehat{\Omega}$ via a mapping $\mathbf{F}_{\ptca}$, for $\ptca = 1, \dots, \nptc$.  
As in the single-patch case, we associate to each patch $\Omega_{\ptca}$ a space of isogeometric differential $0$-forms, $X_h^0(\Omega_{\ptca})$, and a space of differential $1$-forms, $X_h^1(\Omega_{\ptca})$.
The $H \Lambda^1$-conforming spaces of isogeometric differential $1$-forms over the entire domain $\Omega$ are then defined as
\begin{equation}\label{eq:multi-patch_spaces}
	X_{h,0}^1 := \Biggl( \bigtimes_{\ptca=1}^{\nptc} X_{h}^1(\Omega_\ptca) \Biggr) \cap H_0\Lambda^1(\Omega), 
	\qquad
	\widetilde{X}_{h}^1 := \Biggl( \bigtimes_{\ptca=1}^{\nptc} \widetilde{X}_{h}^1(\Omega_\ptca) \Biggr) \cap H\Lambda^1(\Omega),
\end{equation}
where $\bigtimes$ denotes the Cartesian product.  
\begin{assumption}\label{ass:conformity}
On each non-empty interface $\Gamma_{\ptca,\ptcb} := \partial\Omega_{\ptca} \cap \partial\Omega_{\ptcb}$, with $\ptca \neq \ptcb$, we assume that the trace spaces coincide:
\begin{equation}
	\mathrm{tr}(X_h^1(\Omega_{\ptca})|_{\Gamma_{\ptca,\ptcb}} = 
	\mathrm{tr}(X_h^1(\Omega_{\ptcb})|_{\Gamma_{\ptca,\ptcb}}.
\end{equation}
\end{assumption}
Under Assumption~\ref{ass:conformity}, tangential continuity across patches can be enforced by merging coincident degrees of freedom along interfaces, as detailed in~\cite{BSV14}.  
For the space $X_h^1(\Omega)$, this merging procedure must also account for the orientation of the degrees of freedom. 

To construct spaces with vanishing boundary conditions on multi-patch domains, we proceed patch-wise following the procedure described for the single-patch case. When doing so, basis functions supported on interior patch interfaces must not be removed.

Taking both the merging procedure and the boundary constraints into account, we define the global multi-patch basis $\mathcal{B}_{\mathrm{mp}}$ of $X_{h,0}^1(\Omega)$ as
\begin{equation}
	\mathcal{B}_{\mathrm{mp}} := \left\{ \beta_i \right\}_{i=1}^{\ndof},
\end{equation}
such that, for each patch $\Omega_{\ptca}$, either $\iota^1_{\ptca}\left( \beta_i|_{\Omega_{\ptca}} \right)$ or $\iota^1_{\ptca}\left( -\beta_i|_{\Omega_{\ptca}} \right)$ belongs to the local basis ${\mathcal{B}}^1_{\ptca}$.

\subsection{Continuous Galerkin multi-patch formulation} \label{sec:cg}
We now present a continuous Galerkin formulation for the discretization of the problem in~\eqref{eq:ampere}--\eqref{eq:divgaugeB}. Since this approach relies on conforming spline spaces, we consider the spaces introduced in~\eqref{eq:multi-patch_spaces}.

By employing the two specific discrete Hodge--star operators described in~\cite{kapidaniHighOrderGeometric2023}, the continuous Galerkin semi-discrete-in-space variational formulation reads: find $\mathrm{E}_h \in X_{h,0}^1$ and $\mathrm{H}_h \in \widetilde{X}_{h}^1$ such that
\begin{equation}\label{eq:cg_formulation}
	\begin{cases}
		\begin{aligned}
			\left( \partial_t \mathrm{E}_h , \eta_h \right)_{L_\epsilon^2 \Lambda^1(\Omega)}
			&= \phantom{-}\int_\Omega \left( \mathrm{d}\mathrm{H}_h - \mathrm{J}_h \right) \wedge \eta_h,
			&& \forall\, \eta_h \in X_{h,0}^1,\\[4pt]
			\left( \partial_t \mathrm{H}_h , \xi_h \right)_{L_\mu^2 \Lambda^1(\Omega)}
			&= -\int_\Omega \mathrm{d}\mathrm{E}_h \wedge \xi_h,
			&& \forall\, \xi_h \in \widetilde{X}_{h}^1,
		\end{aligned}
	\end{cases}
\end{equation}
where $(\cdot, \cdot)_{L^2_{\gamma}} \Lambda^1(\Omega)$ is the inner product in $L^2 \Lambda^1(\Omega)$ with $\gamma$ a uniformly positive weight, see \cite{AFW06}.

We observe that, when solving~\eqref{eq:cg_formulation} with an explicit time-marching scheme, one must invert linear systems associated with the mass matrix of $X_{h,0}^1$, denoted ${\mathbf{M}}_\epsilon$, and with the mass matrix of $\widetilde{X}_{h}^1$, denoted $\widetilde{\mathbf{M}}_\mu$.
Finally, we note that energy conservation for this semi-discrete scheme follows directly, as shown in~\cite{kapidaniHighOrderGeometric2023}. For this and additional details, we refer the reader to that reference.
\subsection{Symmetric discontinuous Galerkin formulation} \label{sec:sdg}
We now introduce a symmetric discontinuous Galerkin formulation for the problem in~\eqref{eq:ampere}--\eqref{eq:divgaugeB}. In contrast to the continuous Galerkin setting, no inter-patch continuity is enforced here. Accordingly, we consider the local spline spaces on each patch and define the global discrete spaces as the Cartesian products:
\begin{equation*}
	\mathrm{E}_h \in \bigtimes_{\ptca=1}^{\nptc} X_h^1(\Omega_{\ptca}),
	\qquad
	\mathrm{H}_h \in \bigtimes_{\ptca=1}^{\nptc} \widetilde{X}_h^1(\Omega_{\ptca}).
\end{equation*}
Since continuity across patches is not imposed, the corresponding mass matrices are block diagonal.  
With a harmless abuse of notation, we denote them again by ${\mathbf{M}}_\epsilon$ and $\widetilde{\mathbf{M}}_\mu$, although they differ from the mass matrices introduced for the continuous Galerkin formulation.

The semi-discrete variational formulation then reads: find $\mathrm{E}_h \in \bigtimes_\ptca X_h^1(\Omega_\ptca)$ and $\mathrm{H}_h \in \bigtimes_\ptca \widetilde{X}_h^1(\Omega_\ptca)$ such that 
\begin{equation}\label{eq:sdg_formulation}
\begin{cases}
\begin{aligned}
	\sum\limits_{\ptca=1}^{\nptc}
	\left( \partial_t \mathrm{E}_h , \eta_h \right)_{L_\epsilon^2 \Lambda^1(\Omega_\ptca)}&=
	\sum\limits_{\ptca=1}^{\nptc} \left(\int_{\Omega_\ptca} \mathrm{H}_h \wedge \mathrm{d}\eta_h + \oint_{\partial\Omega_\ptca} \mathrm{H}_h^* \wedge \eta_h -	\int_{\Omega_\ptca} \mathrm{J}_h \wedge \eta_h \right), && \forall \eta_h \in X_h^1(\Omega_\ptca),\\
	\sum\limits_{\ptca=1}^{\nptc} \left( \partial_t \mathrm{H}_h , \xi_h \right)_{L_\mu^2 \Lambda^1(\Omega_\ptca)} &= \sum\limits_{\ptca=1}^{\nptc} \left(\int_{\Omega_\ptca} \mathrm{d}\mathrm{E}_h \wedge \xi_h +  \oint_{\partial\Omega_\ptca} \left( \mathrm{E}_h^* - \mathrm{E}_h \right) \wedge \xi_h \right), && \forall \xi_h \in \widetilde{X}_h^1(\Omega_\ptca).
\end{aligned}
\end{cases}
\end{equation}
Here, $\mathrm{E}_h^*$ and $\mathrm{H}_h^*$ denote numerical fluxes defined on the inter-patch boundaries.  
When central numerical fluxes (i.e., arithmetic averages across patch interfaces, see~\cite{hesthavenNodalDiscontinuousGalerkin2008}) are employed, the resulting semi-discrete scheme preserves the electromagnetic energy.
\section{Preconditioning}\label{sec:precond}
As stated in Section~\ref{sec:diffforms}, for both the continuous and discontinuous Galerkin formulations, we are led to solve linear systems associated with mass matrices for $X^1_{h,0}$ and $\widetilde{X}^1_h$, namely ${\mathbf M}_\epsilon$ and $\widetilde{\mathbf{M}}_\mu$. We present in this section how to exploit the tensor-product structure of the spline spaces for devising a preconditioner for those matrices.
We base our work on \cite{LOLI2022245,sangalliIsogeometricPreconditionersBased2016}, where results were shown for the 0-forms mass and stiffness matrices. We here adapt the tools introduced therein to the case of 1-forms.
For brevity, we will only discuss how to handle ${\mathbf M}_\epsilon$, as for $\widetilde{\mathbf M}_\mu$, a similar approach can be followed. 
\subsection{Single patch preconditioner}\label{sec:single_patch_prec}
Let us start by considering a single patch domain. In this case the matrix ${\mathbf{M}}_\epsilon$ has the block structure
\begin{equation}
\label{eq:mass_eps}	{\mathbf M}_\epsilon =
	\begin{bmatrix}
		\mathbf{M}_{1,1} & \mathbf{M}_{1,2} & \mathbf{M}_{1,3} \\
		\mathbf{M}^{\top}_{1,2} & \mathbf{M}_{2,2} & \mathbf{M}_{2,3} \\
		\mathbf{M}^{\top}_{1,3} & \mathbf{M}^{\top}_{2,3} & \mathbf{M}_{3,3}
	\end{bmatrix},
\end{equation}
with the blocks $\mathbf{M}_{k,\ell}$ defined by
\begin{align}
	\nonumber {[\mathbf{M}_{k,\ell}]}_{i,j} &:= \left( (\iota^1)^{-1}(\widehat{B}_{i,\mathbf{p}}\mathrm{d}\widehat{{x}}_k), (\iota^1)^{-1}(\widehat{B}_{j,\mathbf{p}}\mathrm{d}\widehat{{x}}_\ell) \right)_{L_\epsilon^2 \Lambda^1(\Omega)} \\
\label{eq:mass_eps_kl}	&= \int_{\Omega} \widehat{B}_{i,\mathbf{p}}(\textbf{F}^{-1}(\boldsymbol{x})) \widehat{B}_{j,\mathbf{p}} (\textbf{F}^{-1}(\boldsymbol{x})) \epsilon(\boldsymbol{x})[D\textbf{F}^{-1}(\textbf{F}^{-1}(\boldsymbol{x}))D\textbf{F}^{-\top}(\textbf{F}^{-1}(\boldsymbol{x}))]_{k,\ell}\, \mathrm{d}\boldsymbol{x}\\
\nonumber	&= \int_{\widehat \Omega} \widehat{B}_{i,\mathbf{p}}(\widehat{\boldsymbol{x}}) \widehat{B}_{j,\mathbf{p}} (\widehat{\boldsymbol{x}}) \mathcal{C}_{k,\ell}(\widehat{\boldsymbol{x}})\, \mathrm{d}\widehat{\boldsymbol{x}},
\end{align}
for $\widehat{B}_{i,\mathbf{p}} \in {\mathcal B}_{\mathbf p}(\boldsymbol \Xi;k)$ and $\widehat{B}_{j,\mathbf{p}} \in {\mathcal B}_{\mathbf p}(\boldsymbol \Xi;\ell)$, and using the notation $ \widehat{\boldsymbol{x}} = \textbf{F}^{-1}(\boldsymbol{x})$ and
\begin{equation*}
	\mathcal{C}_{k,\ell}(\widehat{\boldsymbol{x}})=\widehat{\epsilon}(\widehat{\boldsymbol{x}})[D\textbf{F}^{-1}(\widehat{\boldsymbol{x}})D\textbf{F}^{-\top}(\widehat{\boldsymbol{x}})]_{k,\ell}\det(D\textbf{F}(\widehat{\boldsymbol{x}})),
\end{equation*}
with $\widehat{\epsilon} = \epsilon \circ \textbf{F}$.
\begin{definition}
Let $\mathbb{S}^3_+$ denote the set of symmetric positive definite $3 \times 3$ matrices, equipped with the Euclidean norm. Given a function $\mathcal{A} \in L^{\infty}(\widehat{\Omega},\mathbb{S}^3_+)$ such that
\begin{equation*}
	\inf_{\widehat{\boldsymbol{x}} \in \widehat{\Omega}}\left[\lambda_{\mathrm{min}}\left(\mathcal{A}(\widehat{\boldsymbol{x}})\right)\right] >0,
\end{equation*}
we define the preconditioner $\mathbf{P}_{\mathcal{A}}$ as the symmetric and positive definite matrix associated to the bilinear form
\begin{equation*}
	\mathbf{u}^{\top} \mathbf{P}_{\mathcal{A}} \mathbf{v} = \sum_{k,\ell=1}^{3} \int_{\widehat{\Omega}} u_{k}(\widehat{\boldsymbol{x}}) v_{\ell}(\widehat{\boldsymbol{x}}) \mathcal{A}_{k,\ell}(\widehat{\boldsymbol{x}}) \, \mathrm{d}\widehat{\boldsymbol{x}}
\end{equation*}
where $\mathbf{u} = \begin{bmatrix} \mathbf{u}_1^{\top} & \mathbf{u}_2^{\top} & \mathbf{u}_3^{\top} \end{bmatrix}^{\top}$, $\mathbf{v} = \begin{bmatrix} \mathbf{v}_1^{\top} & \mathbf{v}_2^{\top} & \mathbf{v}_3^{\top} \end{bmatrix}^{\top}$, and for $k=1,2,3,$ $\mathbf{u}_k,\mathbf{v}_k$ are the coordinate vectors of $u_k,v_k \in \sspan(\mathcal{B}_{\mathbf{p}}(\boldsymbol \Xi;k)) $ with respect to the basis $\mathcal{B}_{\mathbf{p}}(\boldsymbol \Xi;k)$.

\end{definition}

The next theorem shows that ${\mathbf M}_\epsilon$ and $\mathbf{P}_{\mathcal{A}}$ are spectrally equivalent.

\begin{theorem}\label{thm:spectral_equiv}
There exist two positive constants $C_1$ and $C_2$, independent of $\mathbf{\Xi}$ and $\mathbf{p}$, such that  
\begin{equation}\label{eq:spectral_equiv}
	C_1 \leq \lambda_{\mathrm{min}}\left(\mathbf{P}_{\mathcal{A}}^{-\frac{1}{2}} {\mathbf M}_\epsilon \mathbf{P}_{\mathcal{A}}^{-\frac{1}{2}}\right) 
	\leq \lambda_{\mathrm{max}}\left(\mathbf{P}_{\mathcal{A}}^{-\frac{1}{2}} {\mathbf M}_\epsilon \mathbf{P}_{\mathcal{A}}^{-\frac{1}{2}}\right) 
	\leq C_2.
\end{equation}
\end{theorem}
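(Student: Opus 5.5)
The plan is to compare the two Rayleigh quotients pointwise. Since both $\mathbf{P}_{\mathcal{A}}$ and ${\mathbf M}_\epsilon$ are symmetric positive definite, the extreme eigenvalues of $\mathbf{P}_{\mathcal{A}}^{-\frac12}{\mathbf M}_\epsilon\mathbf{P}_{\mathcal{A}}^{-\frac12}$ are the extrema over $\mathbf{u}\neq\mathbf{0}$ of the generalized Rayleigh quotient $\mathbf{u}^\top{\mathbf M}_\epsilon\mathbf{u}/\mathbf{u}^\top\mathbf{P}_{\mathcal{A}}\mathbf{u}$. So it suffices to bound this ratio from above and below by constants independent of $\boldsymbol\Xi$ and $\mathbf p$.

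\textbf{Step 1: write both quadratic forms as integrals of the same function.} Take a coefficient vector $\mathbf{u}=[\mathbf{u}_1^\top\ \mathbf{u}_2^\top\ \mathbf{u}_3^\top]^\top$ and let $u_k\in S_{\mathbf p}(\boldsymbol\Xi;k)$ be the associated spline components. Set $\mathbf{u}(\widehat{\boldsymbol x})=(u_1,u_2,u_3)(\widehat{\boldsymbol x})$. By the block structure \eqref{eq:mass_eps}--\eqref{eq:mass_eps_kl},
\begin{equation*}
\mathbf{u}^\top{\mathbf M}_\epsilon\mathbf{u}=\int_{\widehat\Omega}\mathbf{u}(\widehat{\boldsymbol x})^\top\mathcal{C}(\widehat{\boldsymbol x})\,\mathbf{u}(\widehat{\boldsymbol x})\,\mathrm d\widehat{\boldsymbol x},
\end{equation*}
where $\mathcal C=\widehat\epsilon\,\det(D\mathbf F)\,D\mathbf F^{-1}D\mathbf F^{-\top}$. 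By definition of $\mathbf{P}_{\mathcal{A}}$,
\begin{equation*}
\mathbf{u}^\top\mathbf{P}_{\mathcal{A}}\mathbf{u}=\int_{\widehat\Omega}\mathbf{u}^\top\mathcal{A}\,\mathbf{u}\,\mathrm d\widehat{\boldsymbol x}.
\end{equation*}
The key point is that the same vector field $\mathbf u$ appears in both integrals. The discrete spaces, and hence $\boldsymbol\Xi$ and $\mathbf p$, enter only through which fields $\mathbf u$ are admissible, never through the integrands.

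\textbf{Step 2: show that $\mathcal C$ is uniformly bounded and uniformly positive definite.} Because $\mathbf F$ is bi-Lipschitz, $D\mathbf F$ and $D\mathbf F^{-1}$ are in $L^\infty$. Orientation preservation together with these bounds gives $0<c\le\det D\mathbf F\le C$ almost everywhere. Since $\epsilon$ is uniformly positive and bounded, there exist $0<c_{\mathcal C}\le C_{\mathcal C}$, depending only on $\mathbf F$ and $\epsilon$, such that almost everywhere
\begin{equation*}
c_{\mathcal C}|\mathbf{w}|^2\le\mathbf w^\top\mathcal C\mathbf w\le C_{\mathcal C}|\mathbf w|^2\qquad\text{for all }\mathbf w\in\mathbb R^3.
\end{equation*}
For $\mathcal A$ we already have $0<a_{\min}:=\operatorname{ess\,inf}\lambda_{\min}(\mathcal A)$ by assumption, and $a_{\max}:=\|\mathcal A\|_{L^\infty}<\infty$.

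\textbf{Step 3: compare pointwise and integrate.} Combining the bounds of Step 2 gives, almost everywhere,
\begin{equation*}
\frac{c_{\mathcal C}}{a_{\max}}\,\mathbf u^\top\mathcal A\mathbf u\le\mathbf u^\top\mathcal C\mathbf u\le\frac{C_{\mathcal C}}{a_{\min}}\,\mathbf u^\top\mathcal A\mathbf u.
\end{equation*}
A sharper version uses the extreme eigenvalues of $\mathcal A^{-1/2}\mathcal C\mathcal A^{-1/2}$ instead. Integrating over $\widehat\Omega$ yields \eqref{eq:spectral_equiv} with $C_1=c_{\mathcal C}/a_{\max}$ and $C_2=C_{\mathcal C}/a_{\min}$. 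These constants depend on $\epsilon$, $\mathbf F$ and $\mathcal A$, but not on $\boldsymbol\Xi$ or $\mathbf p$.

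I do not expect a genuine obstacle. The only care needed is in Step 2: one must verify that the pulled-back coefficient $\mathcal C$ is uniformly SPD using only the bi-Lipschitz and orientation hypotheses on $\mathbf F$. One must also note that the off-diagonal blocks $\mathbf M_{k,\ell}$ couple different spline spaces, so the comparison has to be made on the full vector field $\mathbf u$ rather than block by block.
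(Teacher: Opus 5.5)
Your proposal is correct and follows essentially the paper's own argument: a Courant--Fischer reduction to the Rayleigh quotient, writing both quadratic forms as integrals of the same field $(u_1,u_2,u_3)$, a pointwise comparison of $\mathcal{C}$ with $\mathcal{A}$, and integration over $\widehat\Omega$. The paper states the constants directly as $\inf\lambda_{\min}(\mathcal{C}\mathcal{A}^{-1})$ and $\sup\lambda_{\max}(\mathcal{C}\mathcal{A}^{-1})$, which are exactly your ``sharper version'' because $\mathcal{A}^{-1/2}\mathcal{C}\mathcal{A}^{-1/2}$ is similar to $\mathcal{C}\mathcal{A}^{-1}$.
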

\begin{proof}
A generic 1-form in $X^1_{h,0}$ can be seen as $(\iota^1)^{-1}(v)$, with $v \in \Xhathb{1}(\Omega)$ defined as
\begin{equation*}
	v =\sum_{k=1}^3 v_{k} \mathrm{d}\widehat{x}_k, \qquad v_{k} \in \sspan(\mathcal{B}_{\mathbf{p}}(\boldsymbol \Xi;k)).
\end{equation*}
For $k=1,2,3$, let $\mathbf{v}_k$ be the coordinate vector of $v_{k}$ with respect to the basis ${\mathcal B}_{\mathbf p}(\boldsymbol \Xi;k)$.
By the Courant-Fischer theorem, \eqref{eq:spectral_equiv} is equivalent to find $C_1$ and $C_2$ such that
\begin{equation*}
	C_1 \leq \frac{\mathbf{v}^{\top} {\mathbf M}_\epsilon\mathbf{v}}{\mathbf{v}^{\top} \mathbf{P}_{\mathcal{A}} \mathbf{v}} \leq C_2, \qquad \text{for any} \qquad \mathbf{v} = [\mathbf{v}_1^{\top},\mathbf{v}_2^{\top},\mathbf{v}_3^{\top}]^{\top}.
\end{equation*}
Using \eqref{eq:mass_eps} and \eqref{eq:mass_eps_kl}, we observe that 
\begin{equation*}
	\mathbf{v}^{\top} {\mathbf M}_\epsilon\mathbf{v} = \sum_{k,\ell=1}^{3} \int_{\widehat{\Omega}} v_{k}(\widehat{\boldsymbol{x}}) v_{\ell}(\widehat{\boldsymbol{x}}) \mathcal{C}_{k,\ell}(\widehat{\boldsymbol{x}}) \, \mathrm{d}\widehat{\boldsymbol{x}}.
\end{equation*}
Due to the regularity and orientation-preserving assumptions on $\textbf{F}$, and the positivity of $\epsilon$, for all $\widehat{\boldsymbol{x}} \in \widehat \Omega$, the $3 \times 3$ matrix 
\begin{equation*}
	\mathcal{C}(\widehat{\boldsymbol{x}}) := \begin{bmatrix}
		\mathcal{C}_{1,1}(\widehat{\boldsymbol{x}}) & \mathcal{C}_{1,2}(\widehat{\boldsymbol{x}}) & \mathcal{C}_{1,3}(\widehat{\boldsymbol{x}}) \\
		\mathcal{C}_{2,1}(\widehat{\boldsymbol{x}}) & \mathcal{C}_{2,2}(\widehat{\boldsymbol{x}}) & \mathcal{C}_{2,3}(\widehat{\boldsymbol{x}}) \\
		\mathcal{C}_{3,1}(\widehat{\boldsymbol{x}}) & \mathcal{C}_{3,2}(\widehat{\boldsymbol{x}}) & \mathcal{C}_{3,3}(\widehat{\boldsymbol{x}})
	\end{bmatrix}
\end{equation*}
is symmetric positive definite and we can write
\begin{align*}
	\inf_{\widehat{\boldsymbol{x}} \in \widehat{\Omega}}\left[\lambda_{\mathrm{min}}\left(\mathcal{C}(\widehat{\boldsymbol{x}}) \mathcal{A}^{-1}(\widehat{\boldsymbol{x}})\right)\right]	\mathbf{v}^{\top} \mathbf{P}_{\mathcal{A}}\mathbf{v} &\leq \sum_{k,\ell=1}^{3} \int_{\widehat{\Omega}} v_{k}(\widehat{\boldsymbol{x}}) v_{\ell}(\widehat{\boldsymbol{x}}) \mathcal{A}_{k,\ell}(\widehat{\boldsymbol{x}})  \frac{\displaystyle \sum_{k',\ell'=1}^{3} v_{k'}(\widehat{\boldsymbol{x}}) v_{\ell'}(\widehat{\boldsymbol{x}}) {\mathcal{C}}_{k',\ell'}(\widehat{\boldsymbol{x}})}{ \displaystyle \sum_{k',\ell'=1}^{3} v_{k'}(\widehat{\boldsymbol{x}}) v_{\ell'}(\widehat{\boldsymbol{x}}) \mathcal{A}_{k',\ell'}(\widehat{\boldsymbol{x}})} \, \mathrm{d}\widehat{\boldsymbol{x}} \\
	&=\mathbf{v}^{\top} {\mathbf{M}}_{\epsilon}\mathbf{v}.
\end{align*}	
Similarly, 
\begin{align*}
	\mathbf{v}^{\top} {\mathbf{M}}_{\epsilon}\mathbf{v} &= \sum_{k,\ell=1}^{3} \int_{\widehat{\Omega}} v_{k}(\widehat{\boldsymbol{x}}) v_{\ell}(\widehat{\boldsymbol{x}}) \mathcal{A}_{k,\ell}(\widehat{\boldsymbol{x}}) \frac{ \displaystyle \sum_{k',\ell'=1}^{3} v_{k'}(\widehat{\boldsymbol{x}}) v_{\ell'}(\widehat{\boldsymbol{x}}) {\mathcal{C}}_{k',\ell'}(\widehat{\boldsymbol{x}})}{ \displaystyle \sum_{k',\ell'=1}^{3} v_{k'}(\widehat{\boldsymbol{x}}) v_{\ell'}(\widehat{\boldsymbol{x}}) \mathcal{A}_{k',\ell'}(\widehat{\boldsymbol{x}})} \, \mathrm{d}\widehat{\boldsymbol{x}} \\
	&\leq \sup_{\widehat{\boldsymbol{x}} \in \widehat{\Omega}}\left[\lambda_{\mathrm{max}}\left(\mathcal{C}(\widehat{\boldsymbol{x}}) \mathcal{A}^{-1}(\widehat{\boldsymbol{x}})\right)\right]	\mathbf{v}^{\top} \mathbf{P}_{\mathcal{A}}\mathbf{v}.
\end{align*}	
Hence, the thesis holds with
\begin{equation*}
	C_1 = \inf_{\widehat{\boldsymbol{x}} \in \widehat{\Omega}}\left[\lambda_{\mathrm{min}}\left(\mathcal{C}(\widehat{\boldsymbol{x}}) \mathcal{A}^{-1}(\widehat{\boldsymbol{x}})\right)\right] \qquad
	C_2 = \sup_{\widehat{\boldsymbol{x}} \in \widehat{\Omega}}\left[\lambda_{\mathrm{max}}\left(\mathcal{C}(\widehat{\boldsymbol{x}}) \mathcal{A}^{-1}(\widehat{\boldsymbol{x}})\right)\right].
\end{equation*}
\end{proof}
\begin{remark}\label{rem:conditioning}
Theorem~\ref{thm:spectral_equiv} provides the following bound on the condition number of ${\mathbf M}_\epsilon$ under symmetric preconditioning by $\mathbf{P}_{\mathcal{A}}$:
\begin{equation*}
	\kappa_2 \left( \mathbf{P}_{\mathcal{A}}^{-\frac{1}{2}} {\mathbf M}_\epsilon \mathbf{P}_{\mathcal{A}}^{-\frac{1}{2}} \right) 
	\leq 
	\frac{\displaystyle \sup_{\widehat{\boldsymbol{x}} \in \widehat{\Omega}} \lambda_{\mathrm{max}} \left( \mathcal{C}(\widehat{\boldsymbol{x}}) \mathcal{A}^{-1}(\widehat{\boldsymbol{x}}) \right)}
	{\displaystyle \inf_{\widehat{\boldsymbol{x}} \in \widehat{\Omega}} \lambda_{\mathrm{min}} \left( \mathcal{C}(\widehat{\boldsymbol{x}}) \mathcal{A}^{-1}(\widehat{\boldsymbol{x}}) \right)},
\end{equation*}
with the right-hand side independent of both $\mathbf{\Xi}$ and $\mathbf{p}$.  
As a consequence, the number of iterations required by the preconditioned conjugate gradient (PCG) method to solve a system with ${\mathbf M}_\epsilon$ remains uniformly bounded with respect to $\mathbf{\Xi}$ and $\mathbf{p}$. Moreover, the bound becomes tighter as $\mathcal{A}(\widehat{\boldsymbol{x}})$ more closely approximates $\mathcal{C}(\widehat{\boldsymbol{x}})$.
\end{remark}

Following~\cite{LOLI2022245}, we propose applying a symmetric diagonal scaling to further enhance the performance of $\mathbf{P}_{\mathcal{A}}$, yielding:
\begin{equation*}
	\overline{\mathbf{P}}_{\mathcal{A}} := \mathbf{D}_{\mathcal{A}} \mathbf{P}_{\mathcal{A}} \mathbf{D}_{\mathcal{A}},
\end{equation*}
where $\mathbf{D}_{\mathcal{A}}$ is a diagonal matrix with entries defined as
\begin{equation*}
	[\mathbf{D}_{\mathcal{A}}]_{i,i} := \left( \frac{[{\mathbf M}_\epsilon]_{i,i}}{[\mathbf{P}_{\mathcal{A}}]_{i,i}} \right)^{\frac{1}{2}}.
\end{equation*}

In the following, we discuss two possible choices for $\mathcal{A}$ to construct robust and efficient preconditioners.
\subsubsection{Block diagonal preconditioner}\label{sec:bd_prec}
With the choice $\mathcal{A} = \mathbf{I}$, the preconditioner $\mathbf{P}_{\mathbf{I}}$ reduces to a block-diagonal matrix
\begin{equation}\label{eq:prec_sp}
	\mathbf{P}_{\mathbf{I}} :=
	\begin{bmatrix}
		\widehat{\mathbf{M}}_{1,1} & \mathbf{0} & \mathbf{0} \\
		\mathbf{0} & \widehat{\mathbf{M}}_{2,2} & \mathbf{0} \\
		\mathbf{0} & \mathbf{0} & \widehat{\mathbf{M}}_{3,3}
	\end{bmatrix},
\end{equation}
where we have set
$\widehat{\mathbf{M}}_{k,k} := \widehat{\mathbf{M}}_{3;k,k} \otimes \widehat{\mathbf{M}}_{2;k,k} \otimes \widehat{\mathbf{M}}_{1;k,k}$, for $k=1,2,3$, with
\begin{equation*}
	[\widehat{\mathbf{M}}_{d;k,k}]_{i,j} = \int_{0}^{1} \widehat{B}_{i,\widetilde{p}_d} \widehat{B}_{j,\widetilde{p}_d}\, \mathrm{d}\widehat{x}, \qquad \widehat{B}_{i,p_d}, \widehat{B}_{j,p_d} \in \mathcal{B}_{p_d}(\Xi_d; k), \ d=1,2,3.
\end{equation*}
We emphasize that linear systems associated with $\widehat{\mathbf{M}}_{k,k}$, $k=1,2,3$, can be solved efficiently by exploiting its Kronecker structure, see e.g.~\cite{LOLI2022245}. To sum up, to solve a system associated with $\overline{\mathbf{P}}_{\mathbf{I}}$ we can follow Algorithm~\ref{alg:P_I}.
\begin{algorithm}[ht]
	\caption{Solution of the block diagonal preconditioner system $\overline{\mathbf{P}}_{\mathbf{I}} \mathbf{y} = \mathbf{z}$}\label{alg:P_I}
	\begin{algorithmic}[1]
		\State Assemble the matrices $\widehat{\mathbf{M}}_{d;k,k}$, for $d,k=1,2,3,$ and compute the diagonal scaling $\mathbf{D}_{\mathbf{I}}$. 
		\State Compute $\widetilde{\mathbf{z}}= \begin{bmatrix} \widetilde{\mathbf{z}}_1^\top & \widetilde{\mathbf{z}}_2^\top & \widetilde{\mathbf{z}}_3^\top \end{bmatrix}^\top = \mathbf{D}_{\mathbf{I}}^{-1} \mathbf{z}$.
		\State Solve $\left(\widehat{\mathbf{M}}_{3;k,k} \otimes \widehat{\mathbf{M}}_{2;k,k} \otimes \widehat{\mathbf{M}}_{1;k,k}\right) \widetilde{\mathbf{y}}_k = \widetilde{\mathbf{z}}_k$, for $k=1,2,3$.
		\State Compute $\mathbf{y} = \mathbf{D}_{\mathbf{I}}^{-1} \widetilde{\mathbf{y}}$, with $\widetilde{\mathbf{y}} = \begin{bmatrix} \widetilde{\mathbf{y}}_1^\top & \widetilde{\mathbf{y}}_2^\top & \widetilde{\mathbf{y}}_3^\top \end{bmatrix}^\top$.
	\end{algorithmic}
\end{algorithm}
\begin{remark}\label{rem:trivial_F}
	In the special case where $\mathbf{F}$ is the identity map and $\epsilon = 1$, then $\mathbf{P}_{\mathbf{I}}$ coincides with ${\mathbf M}_\epsilon$.
\end{remark}
\subsubsection{Block Kronecker preconditioner}\label{sec:fd_prec}
In many interesting real-world problems, the physical domain $\Omega$ can be parametrized with a map $\textbf{F}$ verifying $\partial_i\textbf{F} \perp \partial_3\textbf{F}$ for $i=1,2$. This occurs in general in 2.5-dimensional geometries, for example, with solids obtained by extrusion, revolution, or by sweeping a two-dimensional section along a path perpendicular to the cross section.  In this case it holds $\mathcal{C}_{1,3}=\mathcal{C}_{2,3}=\mathcal{C}_{3,1}=\mathcal{C}_{3,2}=0$, and ${\mathbf M}_\epsilon$ reduces to
\begin{align}\label{eq:block_structure}
{\mathbf M}_\epsilon &=
\begin{bmatrix}
	\mathbf{M}^{}_{1,1} & \mathbf{M}^{}_{1,2} & \mathbf 0 \\
	\mathbf{M}^{\top}_{1,2} & \mathbf{M}_{2,2} & \mathbf 0 \\
	\mathbf 0 & \mathbf 0 & \mathbf{M}^{}_{3,3}
\end{bmatrix} \\
&=
\begin{bmatrix}
	\mathbf I & \mathbf 0 & \mathbf 0 \\
	\mathbf{M}^{\top}_{1,2} \mathbf{M}^{-1}_{1,1} & \mathbf I & \mathbf 0 \\
	\mathbf 0 & \mathbf 0 & \mathbf I
\end{bmatrix}
\begin{bmatrix}
	\mathbf{M}_{1,1} & \mathbf 0 & \mathbf 0 \\
	\mathbf 0 & \mathbf{S} & \mathbf 0 \\
	\mathbf 0 & \mathbf 0 & \mathbf{M}_{3,3}
\end{bmatrix}
\begin{bmatrix}
	\mathbf I & \mathbf{M}^{-1}_{1,1} \mathbf{M}_{1,2} & \mathbf 0 \\
	\mathbf 0 & \mathbf I & \mathbf 0 \\
	\mathbf 0 & \mathbf 0 & \mathbf I
\end{bmatrix},
\end{align}
where we have set
\begin{equation*}
\mathbf{S} = \mathbf{M}_{2,2} - {\mathbf{M}^{\top}_{1,2}} \mathbf{M}^{-1}_{1,1} \mathbf{M}_{1,2}.
\end{equation*}
To effectively address these cases, we propose a preconditioner $\mathbf{P}_{\overline{\mathcal{C}}}$, where $\overline{\mathcal{C}}$ has the same structure as $\mathcal{C}$, namely,
\begin{equation*}
	\overline{\mathcal{C}} = \begin{bmatrix}
		\overline{\mathcal{C}}_{1,1} & \overline{\mathcal{C}}_{1,2} & 0 \\
		\overline{\mathcal{C}}^{\top}_{1,2} & \overline{\mathcal{C}}_{2,2} & 0 \\
		0 & 0 & \overline{\mathcal{C}}_{3,3}
	\end{bmatrix},
\end{equation*}
where each entry $\overline{\mathcal{C}}_{k,\ell}(\widehat{\boldsymbol{x}})$ is a separable approximation of $\mathcal{C}_{k,\ell}(\widehat{\boldsymbol{x}})$. More precisely,
\begin{equation}\label{eq:separable_factorization}
	\mathcal{C}_{k,\ell}(\widehat{\boldsymbol{x}}) \approx \overline{\mathcal{C}}_{k,\ell}(\widehat{\boldsymbol{x}}) := \omega^{(1)}_{k,\ell}(\widehat{x}_1) \omega^{(2)}_{k,\ell}(\widehat{x}_2) \omega^{(3)}_{k,\ell}(\widehat{x}_3). 
\end{equation}
for $(k,\ell) \in \left\{ (1,1), (1,2), (2,1), (2,2), (3,3) \right\}$.
\begin{assumption}\label{ass:pos_def}
We assume that the approximate coefficient matrix $\overline{\mathcal{C}}(\widehat{\boldsymbol{x}})$ is positive definite for any $\widehat{\boldsymbol{x}} \in \widehat{\Omega}$, which corresponds to the conditions
\begin{equation} \label{pos_def_ass1}
	\overline{\mathcal{C}}_{k,k}(\widehat{\boldsymbol{x}}) = \omega^{(1)}_{k,k}(\widehat{x}_1) \omega^{(2)}_{k,k}(\widehat{x}_2) \omega^{(3)}_{k,k}(\widehat{x}_3) > 0, \qquad k =1,2,3, 	
\end{equation}
and
\begin{equation} \label{pos_def_ass2}
	\overline{\mathcal{C}}_{1,1}(\widehat{\boldsymbol{x}})	\overline{\mathcal{C}}_{2,2}(\widehat{\boldsymbol{x}}) -  \left( \overline{\mathcal{C}}_{1,2}(\widehat{\boldsymbol{x}}) \right)^2  = \prod_{d=1}^{3} \omega^{(d)}_{1,1}(\widehat{x}_d) \omega^{(d)}_{2,2}(\widehat{x}_d) - \prod_{d=1}^{3} \left( {\omega^{(d)}_{1,2}} (\widehat{x}_d)\right)^2 > 0,
\end{equation}
for any $\widehat{\boldsymbol{x}} \in  \widehat{\Omega}$
\end{assumption}
With this choice, the preconditioner $\mathbf{P}_{\overline{\mathcal{C}}}$ exhibits the following block Kronecker structure:
\begin{equation*}
\begin{aligned}
\mathbf{P}_{\overline{\mathcal{C}}} =&
\begin{bmatrix}
	\widehat{\mathbf{M}}_{1,1} & \widehat{\mathbf{M}}_{1,2} & \mathbf 0 \\
	\widehat{\mathbf{M}}^{\top}_{1,2} & \widehat{\mathbf{M}}_{2,2} & \mathbf 0 \\
	\mathbf 0 & \mathbf 0 & \widehat{\mathbf{M}}_{3,3}
\end{bmatrix} \\
=&
\begin{bmatrix}
	\mathbf I & \mathbf 0 & \mathbf 0 \\
	\widehat{\mathbf{M}}^{\top}_{1,2} \widehat{\mathbf{M}}^{-1}_{1,1} & \mathbf I & \mathbf 0 \\
	\mathbf 0 & \mathbf 0 & \mathbf I
\end{bmatrix}
\begin{bmatrix}
	\widehat{\mathbf{M}}_{1,1} & \mathbf 0 & \mathbf 0 \\
	\mathbf 0 & \widehat{\mathbf{S}} & \mathbf 0 \\
	\mathbf 0 & \mathbf 0 & \widehat{\mathbf{M}}_{3,3}
\end{bmatrix}
\begin{bmatrix}
	\mathbf I & \widehat{\mathbf{M}}^{-1}_{1,1} \widehat{\mathbf{M}}_{1,2} & \mathbf 0 \\
	\mathbf 0 & \mathbf I & \mathbf 0 \\
	\mathbf 0 & \mathbf 0 & \mathbf I
\end{bmatrix},
\end{aligned}
\end{equation*}
where, for $(k,\ell) \in \left\{ (1,1), (1,2), (2,1), (2,2), (3,3) \right\}$, we have defined:
\begin{equation*}
	\widehat{\mathbf{M}}_{k,\ell} := \widehat{\mathbf{M}}_{3;k,\ell} \otimes \widehat{\mathbf{M}}_{2;k,\ell} \otimes \widehat{\mathbf{M}}_{1;k,\ell},
\end{equation*}
with
\begin{equation*}
	[\widehat{\mathbf{M}}_{d;k,\ell}]_{i,j} := \int_{0}^{1} \widehat{B}_{i,\widetilde{p}_d} \widehat{B}_{j,\widetilde{p}_d} \omega^{(d)}_{k,\ell}\, \mathrm{d}\widehat{x},
\end{equation*}
for $d=1,2,3$, and for $\widehat{B}_{i,\widetilde{p}_d} \in \mathcal{B}_{\widetilde{p}_d}(\widetilde{\Xi}_d; k)$, and ${\widehat{B}_{j,\widetilde{p}_d} \in \mathcal{B}_{\widetilde{p}_d}(\widetilde{\Xi}_d; \ell)}$; and moreover
\begin{equation*}
	\widehat{\mathbf{S}} := \widehat{\mathbf{M}}_{3;2,2} \otimes \widehat{\mathbf{M}}_{2;2,2} \otimes \widehat{\mathbf{M}}_{1;2,2} - \widehat{\mathbf{T}}_3 \otimes \widehat{\mathbf{T}}_2 \otimes \widehat{\mathbf{T}}_1, 
\end{equation*}
with $\widehat{\mathbf{T}}_d := \widehat{\mathbf{M}}^{\top}_{d;1,2} \left( \widehat{\mathbf{M}}_{d;1,1} \right)^{-1} \widehat{\mathbf{M}}_{d;1,2} $ for $d=1,2,3$.
Note that Assumption~\ref{ass:pos_def} guarantees that $\mathbf{P}_{\overline{\mathcal{C}}}$, and thus $\widehat{\mathbf{S}}$, are positive definite.  We also emphasize that to denote the diagonal blocks of $\mathbf{P}_{\overline{\mathcal{C}}}$ and their Kronecker factors we are using the same notation already used in the case of $\mathbf{P}_{\mathbf{I}}$, even though the definition of these matrices is different. This will not create confusion in the following, since this notation will not be used anymore after the current section.
\begin{remark}
Similarly as for $\mathbf{P}_{\mathbf{I}}$, if $\mathbf{F}$ is the identity map and $\epsilon = 1$, it follows that also $\mathbf{P}_{\overline{\mathcal{C}}}$ coincides with ${\mathbf M}_\epsilon$.
\end{remark}
\begin{remark}
It is of course possible to use the preconditioner ${\mathbf{P}}_{\overline{\mathcal{C}}}$ even when $\partial_3 \mathbf{F}$ is not orthogonal to $\partial_1 \mathbf{F}$ and $\partial_2 \mathbf{F}$, i.e., when $\mathcal{C}$ has no identically zero entries. In accordance with Theorem~\ref{rem:conditioning}, $\mathbf{P}_{\overline{\mathcal{C}}}$ still provides an effective preconditioning strategy as long as the entries $\overline{\mathcal{C}}_{k,\ell}$, for $1 \leq k < \ell \leq 3$, are relatively small. One such case is considered in the numerical experiments of Section~\ref{sec:num}.
\end{remark}
Since the quantity $\omega^{(d)}_{k,\ell}$ is used solely for computing the integrals that define $\widehat{\mathbf{M}}_{d;k,\ell}$, our main focus is on obtaining the approximation~\eqref{eq:separable_factorization} at the quadrature nodes. We denote by $\mathbf{C}_{k,\ell} \in \mathbb{R}^{q^{(1)}_{k,\ell} \times q^{(2)}_{k,\ell} \times q^{(3)}_{k,\ell}}$ the three-dimensional tensor whose entries correspond to the evaluations of the function $\overline{\mathcal{C}}_{k,\ell}$ at the quadrature nodes. Here, $q^{(d)}_{k,\ell}$, for $d = 1,2,3$, represents the number of quadrature nodes in the $d$-th parametric direction.
We first consider the case $k = \ell$, for which all entries of $\mathbf{C}_{k,k}$ are strictly positive. In this scenario, the evaluations $\boldsymbol{\omega}^{(d)}_{k,k}$ at the quadrature nodes, corresponding to the separable form~\eqref{eq:separable_factorization}, can be computed via a few iterations of Algorithm~\ref{alg:sep_approx}, see~\cite{diliberto1951}. This procedure, in particular, guarantees that condition~\eqref{pos_def_ass1} is satisfied.
For the case $k \neq \ell$, we instead employ a rank-1 \emph{canonical polyadic decomposition} (CPD), see~\cite{Hitchcock1927TheEO}. It is important to note that, after computing this approximation, condition~\eqref{pos_def_ass2} must be explicitly verified, as it is not automatically ensured. Nevertheless, in all our numerical experiments, this condition was consistently satisfied.
\begin{algorithm}[ht]
	\caption{Separable approximation of $\mathbf{C}_{k,k} \in \mathbb{R}^{q^{(1)}_{k,k} \times q^{(2)}_{k,k} \times q^{(3)}_{k,k}}$~\cite{diliberto1951}}\label{alg:sep_approx}
	\begin{algorithmic}[1]
		\State Initialize $\boldsymbol{\omega}^{(d)}_{k,k}$ as the vector of length $q^{(d)}_{k,k}$ with all values equal to 1, for $d=1,2,3$. 
		\For{iter=1,\ldots,maxIter}
		\For{$d=1,2,3$}
		\State Compute $\mathcal{V}_d \in \mathbb{R}^{q^{(1)}_{k,k} \times q^{(2)}_{k,k} \times q^{(3)}_{k,k}}$ s.t.
		\begin{equation*}
			[\mathcal{V}_d]_{i_1,i_2,i_3}=\frac{[\mathbf{C}_{k,k}]_{i_1,i_2,i_3}}{\displaystyle \prod_{\substack{s=1,2,3 \\s \neq d}}\left[\boldsymbol{\omega}^{(s)}_{k,k}\right]_{i_s}}.
		\end{equation*}
		\For{$j=1,\ldots,q^{(d)}_{k,k}$}
		\State Compute $m=\min\left\{ [\mathcal{V}_d]_{i_1,i_{2},i_3}\, \big| \, i_d=j, \, i_s=1,\ldots,q^{(s)}_{k,k},\, \text{for } s\left\{ 1,2,3\right\} \ \text{and}\ s \neq d \right\}$.
		\State Compute $M=\max\left\{ [\mathcal{V}_d]_{i_1,i_{2},i_3}\, \big| \, i_d=j, \, i_s=1,\ldots,q^{(s)}_{k,k},\, \text{for } s \in \left\{ 1,2,3\right\} \ \text{and} \ s \neq d \right\}$.
		\State Update $\left[\boldsymbol{\omega}^{(d)}_{k,k}\right]_{j}= \sqrt{mM}$.
		\EndFor
		\EndFor
		\EndFor		
	\end{algorithmic}
\end{algorithm}	
\\
We observe that the inverse of $\mathbf{P}_{\overline{\mathcal{C}}}$ can be factorized as
\begin{equation*}
	\mathbf{P}_{\overline{\mathcal{C}}}^{-1} = \begin{bmatrix}
		\mathbf I & -\widehat{\mathbf{M}}^{-1}_{1,1} \widehat{\mathbf{M}}_{1,2} & \mathbf 0 \\
		\mathbf 0 & \mathbf I & \mathbf 0 \\
		\mathbf 0 & \mathbf 0 & \mathbf I
	\end{bmatrix}
	\begin{bmatrix}
		\widehat{\mathbf{M}}_{1,1}^{-1} & \mathbf 0 & \mathbf 0 \\
		\mathbf 0 & \widehat{\mathbf{S}}^{-1} & \mathbf 0 \\
		\mathbf 0 & \mathbf 0 & \widehat{\mathbf{M}}_{3,3}^{-1}
	\end{bmatrix}
	\begin{bmatrix}
		\mathbf I & \mathbf 0 & \mathbf 0 \\
		-\widehat{\mathbf{M}}^{\top}_{1,2} \widehat{\mathbf{M}}^{-1}_{1,1} & \mathbf I & \mathbf 0 \\
		\mathbf 0 & \mathbf 0 & \mathbf I
	\end{bmatrix} := \mathbf{U} \mathbf{B}^{-1} \mathbf{L}.
\end{equation*}
The application of $\overline{\mathbf{P}}^{-1}_{\overline{\mathcal{C}}}$, which entails solving a linear system associated with $\overline{\mathbf{P}}_{\overline{\mathcal{C}}}$, requires the inversion of the matrices $\widehat{\mathbf{S}}$ and $\widehat{\mathbf{M}}_{k,k}$ for $k=1,3$, as well as the computation of matrix-vector products involving $\widehat{\mathbf{M}}_{1,2}$ and $\widehat{\mathbf{M}}_{1,2}^{\top}$. The linear systems associated with $\widehat{\mathbf{M}}_{k,k}$, for $k=1,3$, can be solved using the same strategy described at the end of Section~\ref{sec:bd_prec}.
To invert $\widehat{\mathbf{S}}$, we employ the \emph{fast diagonalization} (FD) algorithm described in~\cite{sangalliIsogeometricPreconditionersBased2016}. In brief, we first compute the generalized eigendecompositions of the matrix $\widehat{\mathbf{T}}_d$ w.r.t. the scalar product induced by $\widehat{\mathbf{M}}_{d;2,2}$ for $d=1,2,3$, that is,
\begin{equation}\label{eq:gen_eigs}
	\mathbf{Q}_d^{\top} \widehat{\mathbf{T}}_d \mathbf{Q}_d = \boldsymbol{\Lambda}_d, \qquad \mathbf{Q}_d^{\top} \widehat{\mathbf{M}}_{d;2,2} \mathbf{Q}_d = \mathbf{I},
\end{equation}
where the matrix $\boldsymbol{\Lambda}_d$ is diagonal. The existence of this factorization is guaranteed, since $\widehat{\mathbf{T}}_d$ is symmetric and $\widehat{\mathbf{M}}_{d;2,2}$ is symmetric positive definite.
Subsequently, the inverse of the Schur complement $\widehat{\mathbf{S}}$ can be expressed as
\begin{equation*}
	\widehat{\mathbf{S}}^{-1} = \left( \mathbf{Q}_3 \otimes \mathbf{Q}_2 \otimes \mathbf{Q}_1 \right) \left( \mathbf{I} \otimes \mathbf{I} \otimes \mathbf{I} - \boldsymbol{\Lambda}_3 \otimes \boldsymbol{\Lambda}_2 \otimes \boldsymbol{\Lambda}_1 \right)^{-1} \left( \mathbf{Q}^{\top}_3 \otimes \mathbf{Q}^{\top}_2 \otimes \mathbf{Q}^{\top}_1 \right).
\end{equation*}
where we emphasize that the central matrix of the right-hand side is diagonal.

In summary, the procedure for solving a system associated with $\overline{\mathbf{P}}_{\overline{\mathcal{C}}}$ is outlined in Algorithm~\ref{alg:Pbk}, while the procedure for solving a system of the form $\widehat{\mathbf{S}}\mathbf{y}=\mathbf{z}$ is described in Algorithm~\ref{alg:fast_diag}.
\begin{algorithm}[ht]
	\caption{Solution of the block Kronecker preconditioner system $\overline{\mathbf{P}}_{\widetilde{\mathcal{C}}} \mathbf{y} = \mathbf{z}$}\label{alg:Pbk}
	\begin{algorithmic}[1]
		\State Compute the approximations $\boldsymbol{\omega}^{(d)}_{k,\ell}$ and assemble the matrices $\widehat{\mathbf{M}}_{d;k,\ell}$, for $d=1,2,3$ and $(k,\ell)=(1,1),(1,2),(2,2),(3,3)$. Compute the diagonal scaling $\mathbf{D}_{\widetilde{\mathfrak{C}}}$.
		\State Compute $\widetilde{\mathbf{z}} = \mathbf{D}^{-1}_{\widetilde{\mathcal{C}}} \mathbf{z}$.
		\State Compute $\widetilde{\mathbf{r}} = \mathbf{L} \widetilde{\mathbf{z}}$.
		\State Solve $\mathbf{B} \mathbf{r} = \widetilde{\mathbf{r}}$.
		\State  Compute $\widetilde{\mathbf{y}} = \mathbf{U} \mathbf{r}$.
		\State Compute $\mathbf{y} = \mathbf{D}_{\widetilde{\mathcal{C}}}^{-1} \widetilde{\mathbf{y}}$.
	\end{algorithmic}
\end{algorithm}
\begin{algorithm}[ht]
	\caption{Fast Diagonalization method to solve the Schur system $\widehat{\mathbf{S}}\mathbf{y}=\mathbf{z}$}\label{alg:fast_diag}
	\begin{algorithmic}[1]
		\State Compute the generalized eigendecompositions~\eqref{eq:gen_eigs}. 
		\State Compute $\widetilde{\mathbf{z}} = \left( \mathbf{Q}^{\top}_3 \otimes \mathbf{Q}^{\top}_2 \otimes \mathbf{Q}^{\top}_1 \right)\mathbf{z}$.
		\State Solve $ \left( \mathbf{I} \otimes \mathbf{I} \otimes \mathbf{I} - \boldsymbol{\Lambda}_3 \otimes \boldsymbol{\Lambda}_2 \otimes \boldsymbol{\Lambda}_1 \right) \widetilde{\mathbf{y}} = \widetilde{\mathbf{z}}$.
		\State Compute $\mathbf{y} = \left( \mathbf{Q}_3 \otimes \mathbf{Q}_2 \otimes \mathbf{Q}_1 \right)\ \widetilde{\mathbf{y}}$.
	\end{algorithmic}
\end{algorithm}
\subsection{Symmetric discontinuous Galerkin preconditioner}
Since the multi-patch discontinuous Galerkin formulation employs functions that are defined independently on each patch without imposing any continuity constraint on the interfaces, the mass matrix $\mathbf{M}_{\epsilon}$ exhibits a block diagonal structure, where each block corresponds to the mass matrix of a single patch, i.e.
\begin{equation*}
	{\mathbf{M}}_{\epsilon}=
	\begin{bmatrix}
		{\mathbf{M}}_{\epsilon}^{1} &        & \\
		& \ddots & \\
		&        & {\mathbf{M}}_{\epsilon}^{\nptc}
	\end{bmatrix}.
\end{equation*}
A natural choice for a preconditioner for ${\mathbf{M}}_{\epsilon}$ is given by
\begin{equation*}
	\pdg:=
	\begin{bmatrix}
		\mathbf{P}_{1} &        & \\
		& \ddots & \\
		&        & \mathbf{P}_{\nptc}
	\end{bmatrix},
\end{equation*}
where, for $\ptca=1,\ldots,\nptc$, each $\mathbf{P}_{\ptca}$ represents one of the single patch preconditioners introduced in Section~\ref{sec:single_patch_prec}, corresponding to $\Omega_{\ptca}$. Consequently, the robustness established in Theorem~\ref{thm:spectral_equiv} and Remark~\ref{rem:conditioning} extends naturally.
\subsection{Continuous Galerkin multi-patch preconditioner}\label{sec:cont_gal_prec}
To address multi-patch domains, we employ an \emph{additive Schwarz} method, as described in~\cite[Chapter~2]{toselli2006domain}, combined with the single patch preconditioners introduced in Section~\ref{sec:single_patch_prec}, following an approach similar to that in~\cite{LOLI2022245}.  
For notational consistency with~\cite{toselli2006domain}, we introduce the bilinear form associated with the mass matrix for $X_{h,0}^1(\Omega)$:
\begin{equation*}
	a : X_{h,0}^1 \times X_{h,0}^1 \rightarrow \mathbb{R},
	\qquad
	a(u,v) := (u,v)_{L^2_{\epsilon}\Lambda^1(\Omega)}
	= \mathbf{v}^{\top} \mathbf{M}_\epsilon \mathbf{u},
\end{equation*}
where $\mathbf{u}$ and $\mathbf{v}$ denote the coordinate vectors of $u$ and $v$ with respect to the basis $\mathcal{B}_{\mathrm{mp}}$.\\
We next introduce a family $\{ V_{\ptca} \}_{\ptca=1,\ldots,\nptc}$ of subspaces of $X_{h,0}^1$ such that each $V_{\ptca}$ is the span of the basis functions that do not vanish on the patch $\Omega_{\ptca}$, that is,
\begin{equation*}
	V_{\ptca} = \mathrm{span}(\mathcal{B}_{\ptca}),
	\qquad
	\mathcal{B}_{\ptca}
	:= \left\{ \beta_i \in \mathcal{B}_{\mathrm{mp}} :
	\ \mathrm{supp}(\beta_i) \cap \Omega_{\ptca} \neq \emptyset \right\}.
\end{equation*}
The support of basis functions in $\mathcal{B}_{\ptca}$ is contained in $\Omega_{\ptca}$ and in the patches that share at least one edge with it.  
For each patch $\Omega_{\ptca}$, we therefore introduce the set of \emph{adjacent patches}
\begin{equation*}
	\mathcal{N}_{\ptca}
	:= \left\{ \ptcb \in \{1,\ldots,\nptc\} :
	\ \overline{\Omega}_{\ptca} \cap \overline{\Omega}_{\ptcb}
	\ \text{contains at least one edge} \right\}
\end{equation*}
and the maximal adjacency number
\begin{equation*}
	\nadj := \max_{\ptca=1,\ldots,\nptc} |\mathcal{N}_{\ptca}|.
\end{equation*}
Because the supports of the basis functions extend into adjacent patches, two distinct subspaces $V_{\ptca}$ and $V_{\ptcb}$ can have an interaction even if $\Omega_{\ptca}$ and $\Omega_{\ptcb}$ do not share an edge. We define the set of \emph{interacting patches} for $\Omega_{\ptca}$ based on the overlap of basis function supports:
\begin{equation*}
	\mathcal{N}_{\mathrm{int}, \ptca}
	:= \left\{ \ptcb \in \{ 1,\ldots,\nptc \} :
		\ \exists \beta_i \in \mathcal{B}_{\ptca} , \ \beta_j \in \mathcal{B}_{\ptcb}
		\ \text{s.t.} \ \supp(\beta_i) \cap \supp(\beta_j) \neq \emptyset \right\}.
\end{equation*}
The actual \emph{maximal interaction number} characterizing the sparsity of the subspace decomposition is then given by:
\begin{equation*}
	\nint := \max_{\ptca=1,\ldots,\nptc} |\mathcal{N}_{\mathrm{int}, \ptca}|.
\end{equation*}
\begin{remark}	
	The indices of interacting patches $\mathcal{N}_{\mathrm{int}, \ptca}$ depend on the patch adjacency graph, but also on the element discretization. 
	Consider, for example, three aligned patches $\Omega_1$, $\Omega_2$, and $\Omega_3$, where the first and last patches are not adjacent. 
	If the mesh is very coarse (only one element along the direction of alignment) there exist functions in $\mathcal{B}_{1}$ and $\mathcal{B}_{3}$ such that their supports intersection is not empty. However, if the mesh is refined along the direction of alignment, functions in $\mathcal{B}_{1}$ and $\mathcal{B}_{3}$ would not interact anymore.
\end{remark}
For each subspace $V_{\ptca}$, we introduce a local bilinear form, called \emph{local solver}, associated with the $L^2_{\epsilon}\Lambda^1(\Omega_{\ptca})$ inner product applied to the restrictions to $\Omega_{\ptca}$ of functions in $V_{\ptca}$:
\begin{equation}\label{eq:local_solver}
	\widetilde{a}_{\ptca} : V_{\ptca} \times V_{\ptca} \rightarrow \mathbb{R},
	\qquad 
	\widetilde{a}_{\ptca}(u_{\ptca}, v_{\ptca})	:= (u_{\ptca}|_{\Omega_{\ptca}},v_{\ptca}|_{\Omega_{\ptca}})_{L^2_{\epsilon}\Lambda^1(\Omega_{\ptca})}
	= \mathbf{v}^{\top}_{\ptca} \mathbf{G}^{\top}_{\ptca} 	\mathbf{M}^{\ptca}_{\epsilon} 	\mathbf{G}_{\ptca} \mathbf{u}_{\ptca},
\end{equation}
where $\mathbf{u}_{\ptca}$ and $\mathbf{v}_{\ptca}$ are the coordinate vectors of $u_{\ptca}$ and $v_{\ptca}$ relative to the basis $\mathcal{B}_{\ptca}$, $\mathbf{M}^{\ptca}_{\epsilon}$ is the mass matrix on $\Omega_{\ptca}$, and $\mathbf{G}_{\ptca}$ is a diagonal matrix with entries $\pm 1$ that implements the orientation change from the multi-patch basis to the single patch basis.\\
We define the ideal multi-patch additive Schwarz preconditioner as
\begin{equation*}
	\pad^{-1} := \sum_{\ptca=1}^{\nptc} \mathbf{R}_{\ptca}^{\top} \mathbf{G}^{\top}_{\ptca} \left(\mathbf{M}^{\ptca}_{\epsilon}\right)^{-1} \mathbf{G}_{\ptca} \mathbf{R}_{\ptca},
\end{equation*}
where $\mathbf{R}_{\ptca} \in \mathbb{R}^{N_{\mathrm{dof},\ptca} \times \ndof}$ extracts the degrees of freedom associated with the local space $V_{\ptca}$, and $N_{\mathrm{dof},\ptca}$ is the dimension of this local space.  
The next result provides an upper bound for the condition number of the matrix $\mathbf{M}_\epsilon$, symmetrically preconditioned with $\pad$.
\begin{theorem}\label{thm:additive_schwarz}
	There exists a positive constant $C$, independent of the meshes defined by $\mathbf{\Xi}^{(\ptca)}$ for $\ptca = 1,\ldots,\nptc$, the total number of patches $\nptc$, and the parameters $\nadj$ and $\nint$, but possibly dependent on the spline degrees $\mathbf{p}$, the geometric mapping $\mathbf{F}$, and the physical parameter $\epsilon$, such that
	\begin{equation*}
		\kappa_2 \left( \pad^{-1/2} \mathbf{M}_\epsilon \, \pad^{-1/2} \right) \leq C \nadj (\nint +1).
	\end{equation*}
\end{theorem}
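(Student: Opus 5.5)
We follow the abstract additive Schwarz framework of \cite[Chapter~2]{toselli2006domain}. There, the condition number of the preconditioned operator is bounded by $C_0^2\,\omega\,(\rho(\mathcal{E})+1)$. Here $C_0^2$ is the stable decomposition constant, $\omega$ the local stability constant of the local solvers $\widetilde a_{\ptca}$ relative to $a$, and $\rho(\mathcal{E})$ the spectral radius of the strengthened Cauchy--Schwarz matrix. The plan is to estimate these three quantities separately.

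\textbf{Strengthened Cauchy--Schwarz.} Set $\mathcal{E}_{\ptca\ptcb}=1$ if $\ptcb\in\mathcal{N}_{\mathrm{int},\ptca}$ and $0$ otherwise. If $\ptcb\notin\mathcal{N}_{\mathrm{int},\ptca}$, functions in $V_{\ptca}$ and $V_{\ptcb}$ have disjoint supports up to measure zero, so $a(u_\ptca,u_\ptcb)=0$. The strengthened Cauchy--Schwarz inequality with this $\mathcal{E}$ therefore follows from the ordinary Cauchy--Schwarz inequality, and $\rho(\mathcal{E})\le\|\mathcal{E}\|_\infty\le\nint$, which produces the factor $\nint+1$.

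\textbf{Local stability.} We need $a(u_\ptca,u_\ptca)\le\omega\,\widetilde a_\ptca(u_\ptca,u_\ptca)$ for $u_\ptca\in V_\ptca$. The left side equals $\sum_{\ptcb\in\mathcal{N}_\ptca\cup\{\ptca\}}\|u_\ptca\|^2_{L^2_\epsilon\Lambda^1(\Omega_\ptcb)}$, while the right side only sees $\Omega_\ptca$. The key step is a discrete extension estimate: a function in $V_\ptca$ is determined by its coefficients on $\Omega_\ptca$, because every basis function in $\mathcal{B}_\ptca$ is nonzero on $\Omega_\ptca$. Its restriction to a neighbour $\Omega_\ptcb$ is the spline on $\Omega_\ptcb$ carrying only the interface coefficients shared with $\Omega_\ptca$, all others being zero. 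We then prove
\[
\|u_\ptca\|_{L^2(\Omega_\ptcb)}\le C\|u_\ptca\|_{L^2(\Omega_\ptca)}.
\]
To do this, we pull back to the parametric cube, where the mapping and $\epsilon$ contribute constants. We then use tensor-product B-spline stability, $\|\sum c_iB_i\|_{L^2}^2\simeq\sum c_i^2\,|\supp B_i|$ with constants depending on $p$. Finally, we compare coefficient norms on the two sides. The delicate point is that interface functions may have small supports in $\Omega_\ptca$ and large ones in $\Omega_\ptcb$ when the meshes differ. However, Assumption~\ref{ass:conformity} forces identical traces, so the element sizes tangential to the interface match. In the normal direction, the boundary B-splines are the first ones of an open knot vector, and for $\Xhathb{1}$ only a single layer of normal functions is shared across the interface. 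Their normal-direction contribution can be compared via a one-dimensional inverse/trace-type argument on the first element. I expect this comparison to be the main obstacle: it is where mesh independence must be secured, and the constant may depend on $p$ (and, if needed, on the ratio of the first normal element sizes, which we would control via the mapping). Summing over at most $\nadj$ neighbours gives $\omega\le C\,\nadj$.

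\textbf{Stable decomposition.} Given $u\in X^1_{h,0}$, we decompose it with a partition of the degrees of freedom rather than a partition of unity. Assign each global basis function $\beta_i$ to exactly one patch $\ptca$ with $\beta_i\in\mathcal{B}_\ptca$, and let $u_\ptca$ collect the corresponding terms of $u$. Then $\widetilde a_\ptca(u_\ptca,u_\ptca)=\|u_\ptca\|^2_{L^2_\epsilon(\Omega_\ptca)}$. By the same B-spline $L^2$ stability on each patch, this is bounded by $C$ times the squared coefficients of $u$ on $\Omega_\ptca$ weighted by support sizes, hence by $C\|u\|^2_{L^2_\epsilon(\Omega_\ptca)}$. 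Summing over the disjoint patches gives $\sum_\ptca\widetilde a_\ptca(u_\ptca,u_\ptca)\le C\,a(u,u)$, so $C_0^2$ is independent of $\nptc$, $\nadj$ and $\nint$.

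Combining the three estimates yields $\kappa_2\le C\,\nadj(\nint+1)$, with $C$ depending only on $\mathbf{p}$, $\mathbf{F}$ and $\epsilon$.
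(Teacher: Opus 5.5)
Your architecture is the paper's: you use the abstract bound $\kappa_2\le C_0^2\,\omega\,(\rho(\mathcal{E})+1)$ of \cite[Theorem~2.7]{toselli2006domain}, and your strengthened Cauchy--Schwarz step is exactly the paper's.

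\textbf{Stable decomposition.} Here you take a modest but genuine variant. The paper splits each coefficient as $u_i/n_i$ and bounds the resulting Rayleigh quotient by the Euclidean condition number $\kappa_2(\mathbf{M}^{\ptca}_\epsilon)$. It then invokes \cite[Lemma~1]{LOLI2022245} for $\kappa_2(\widehat{\mathbf{M}}_{k,k})$. You instead assign each degree of freedom to one patch and use support-weighted B-spline stability. Both decompositions act as diagonal contractions of the local coefficient vector, so the choice $0/1$ versus $1/n_i$ is immaterial. What your route gains is that the weighted norm is also diagonal and commutes with the contraction. Hence your $C_0$ depends only on $\mathbf{p}$, $\mathbf{F}$, $\epsilon$ for arbitrary knot vectors. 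The paper's $\kappa_2(\widehat{\mathbf{M}}_{k,k})$, by contrast, grows with the ratio of the largest to the smallest knot span within a patch.

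\textbf{Local stability.} This step cannot be closed the way you suggest. Take $\beta\in\mathcal{B}_{\ptca}\cap\mathcal{B}_{\ptcb}$, a basis function of a tangential component associated with the face $\Gamma_{\ptca,\ptcb}$. Its normal factor is the first B-spline of an open knot vector, supported on the first knot span. That span has parametric length $h_{\ptca}$ in $\Omega_{\ptca}$ and $h_{\ptcb}$ in $\Omega_{\ptcb}$. The tangential factors match by Assumption~\ref{ass:conformity}, so $a(\beta,\beta)/\widetilde a_{\ptca}(\beta,\beta)\simeq 1+h_{\ptcb}/h_{\ptca}$, up to constants depending on $\mathbf{p}$, $\mathbf{F}$, $\epsilon$.

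No inverse or trace inequality on the first element removes this ratio. Nor can $\mathbf{F}$ control it, since the ratio is fixed by the knot vectors $\mathbf{\Xi}^{(\ptca)}$ and $\mathbf{\Xi}^{(\ptcb)}$.

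The loss is also not an artifact of the proof. Define $T_{\ptca}$ by $\widetilde a_{\ptca}(T_{\ptca}u,v)=a(u,v)$ for all $v\in V_{\ptca}$. Cauchy--Schwarz in $\widetilde a_{\ptca}$ gives $a(T_{\ptca}\beta,\beta)\ge a(\beta,\beta)^2/\widetilde a_{\ptca}(\beta,\beta)$, hence $\lambda_{\max}(\pad^{-1}\mathbf{M}_\epsilon)\gtrsim 1+h_{\ptcb}/h_{\ptca}$. On the other hand $\lambda_{\min}\le 1$. To see this, test on a function of $V_{\ptca}$ supported away from $\partial\Omega_{\ptca}$; such a function exists once each patch has a few elements per direction, and the preconditioned operator leaves it fixed.

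You therefore need an explicit hypothesis: adjacent patches must have comparable mesh sizes near their common interfaces, for example quasi-uniform meshes refined uniformly as in Section~\ref{sec:num}. Under it, your support-size comparison yields $\omega\lesssim\nadj$, with a constant that also depends on the bound for $h_{\ptcb}/h_{\ptca}$.

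The paper relies on the same hypothesis tacitly, when it declares $C_{\ptca,\ptcb}=\lambda_{\max}(\mathbf{M}^{\ptcb}_\epsilon)/\lambda_{\min}(\mathbf{M}^{\ptca}_\epsilon)$ to be mesh independent. The mesh independence claimed in the theorem has to be read under that hypothesis.
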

\begin{proof}
If the \emph{stable decomposition}, \emph{strengthened Cauchy-Schwarz inequalities}, and \emph{local stability} conditions hold, see~\cite[Assumptions 2.2, 2.3, and 2.4]{toselli2006domain}, then the conclusion follows directly from~\cite[Theorem 2.7]{toselli2006domain}.
\begin{itemize}
\item \textbf{Stable decomposition.}
We have to prove that there exists $C_0>0$, independent of the meshes $\mathbf{\Xi}^{(\ptca)}$ but possibly dependent on $\mathbf{p}$, $\mathbf{F}$, and $\epsilon$, such that every 1-form $u \in X_{h,0}^1$ admits a decomposition
\begin{equation*}
u = \sum_{\ptca=1}^{\nptc} u^{(\ptca)}, \qquad u^{(\ptca)} \in V_{\ptca},
\end{equation*}
satisfying
\begin{equation}\label{eq:stab_dec}
	\sum_{\ptca=1}^{\nptc} \widetilde{a}_{\ptca}(u^{(\ptca)},u^{(\ptca)}) \leq C_0^2\, a(u,u).
\end{equation}
Any $u \in X_{h,0}^1$ can be written as
\begin{equation*}
	u = \sum_{i=1}^{\ndof} u_i \beta_i, \quad \beta_i \in \mathcal{B}_{\mathrm{mp}}.
\end{equation*}
For each patch $\ptca=1,\ldots,\nptc$, we define
\begin{equation*}
	u^{(\ptca)} := \sum_{i=1}^{\ndof} u_i^{(\ptca)} \beta_i,
	\qquad
	u_i^{(\ptca)} :=
	\begin{cases}
		0, & \text{if } \supp(\beta_i) \cap \Omega_{\ptca} = \emptyset,\\[1mm]
		\frac{u_i}{n_i}, & \text{otherwise},
	\end{cases}
\end{equation*}
where 
\begin{equation*}
	n_i := \#\{ \ptcb : \supp(\beta_i) \cap \Omega_{\ptcb} \neq \emptyset \}.
\end{equation*}
By construction, $u = \sum_{\ptca=1}^{\nptc} u^{(\ptca)}$.
The global $L^2_{\epsilon}\Lambda^1$-norm can be written as a sum over the patches:
\begin{equation*}
	a(u,u) = (u,u)_{L^2_{\epsilon}\Lambda^1(\Omega)} = \sum_{\ptca=1}^{\nptc} (u|_{\Omega_{\ptca}}, u|_{\Omega_{\ptca}})_{L^2_{\epsilon}\Lambda^1(\Omega_{\ptca})}.
\end{equation*}
Since the restriction of each $\beta_i$ to $\Omega_{\ptca}$ coincides, up to a sign, with an element of the local patch basis $\mathcal{B}_{\ptca}^1$, we have
\begin{equation*}
	(u|_{\Omega_{\ptca}}, u|_{\Omega_{\ptca}})_{L^2_{\epsilon}\Lambda^1(\Omega_{\ptca})}
	= \mathbf{u}^{\top} \mathbf{R}_{\ptca}^{\top} \mathbf{G}_{\ptca}^{\top} \mathbf{M}^{\ptca}_\epsilon \mathbf{G}_{\ptca} \mathbf{R}_{\ptca} \mathbf{u},
\end{equation*}
where $\mathbf{u}$ is the coordinate vector of $u$ with respect to $\mathcal{B}_{\mathrm{mp}}$.\\
For the local component $u^{(\ptca)}$, we similarly have
\begin{equation*}
	\widetilde{a}_{\ptca}(u^{(\ptca)}, u^{(\ptca)})
	= (u^{(\ptca)}|_{\Omega_{\ptca}}, u^{(\ptca)}|_{\Omega_{\ptca}})_{L^2_{\epsilon}\Lambda^1(\Omega_{\ptca})}
	= \mathbf{u}^{\top} \mathbf{E}^{\top} \mathbf{R}_{\ptca}^{\top} \mathbf{G}_{\ptca}^{\top} \mathbf{M}^{\ptca}_\epsilon \mathbf{G}_{\ptca} \mathbf{R}_{\ptca} \mathbf{E} \mathbf{u},
\end{equation*}
where $\mathbf{E} := \mathrm{diag}(1/n_1, \dots, 1/n_{\ndof})$. 

Recalling that the restriction matrix $\mathbf{R}_{\ptca}$ selects the degrees of freedom corresponding to $V_\ptca$, we define the local vector $\mathbf{u}_{\ptca} = [u_{i_1}, \dots, u_{i_{N_{\mathrm{dof},\ptca}}}]^\top$ as
\begin{equation*}
	\mathbf{u}_{\ptca} := \mathbf{R}_{\ptca} \mathbf{u}.
\end{equation*}
Next, we introduce the local diagonal matrix $\mathbf{E}_{\ptca} \in \mathbb{R}^{N_{\mathrm{dof},\ptca} \times N_{\mathrm{dof},\ptca}}$, given by
\begin{equation*}
	\mathbf{E}_{\ptca} := \mathrm{diag}(1/n_{i_1}, \dots, 1/n_{i_{N_{\mathrm{dof},\ptca}}}).
\end{equation*}
This allows us to write
\begin{equation*}
	\mathbf{R}_{\ptca} \mathbf{E} \mathbf{u} = \mathbf{E}_{\ptca} \mathbf{u}_{\ptca}.
\end{equation*}
Consequently, we obtain the representations
\begin{equation*}
	\begin{aligned}
		(u|_{\Omega_{\ptca}}, u|_{\Omega_{\ptca}})_{L^2_{\epsilon}\Lambda^1(\Omega_{\ptca})}
		&= \mathbf{u}_{\ptca}^{\top} \mathbf{G}_{\ptca}^{\top} \mathbf{M}^{\ptca}_\epsilon \mathbf{G}_{\ptca} \mathbf{u}_{\ptca},\\
		\widetilde{a}_{\ptca}(u^{(\ptca)}, u^{(\ptca)})
		&= \mathbf{u}_{\ptca}^{\top} \mathbf{E}_{\ptca}^{\top} \mathbf{G}_{\ptca}^{\top} \mathbf{M}^{\ptca}_\epsilon \mathbf{G}_{\ptca} \mathbf{E}_{\ptca} \mathbf{u}_{\ptca}.
	\end{aligned}
\end{equation*}
Since $\mathbf{G}_{\ptca}$ is diagonal with entries equal to $\pm 1$, it follows that
\begin{equation*}
	\frac{\mathbf{v}_{\ptca}^{\top} \mathbf{E}_{\ptca}^{\top} \mathbf{G}_{\ptca}^{\top} \mathbf{M}^{\ptca}_\epsilon \mathbf{G}_{\ptca} \mathbf{E}_{\ptca} \mathbf{v}_{\ptca}}
	{\mathbf{v}_{\ptca}^{\top} \mathbf{G}_{\ptca}^{\top} \mathbf{M}^{\ptca}_\epsilon \mathbf{G}_{\ptca} \mathbf{v}_{\ptca}}
	\leq \frac{\lambda_{\max}(\mathbf{E}_{\ptca})^2 \lambda_{\max}(\mathbf{G}_{\ptca})^2 \lambda_{\max}(\mathbf{M}^{\ptca}_\epsilon)}
	{\lambda_{\min}(\mathbf{G}_{\ptca})^2 \lambda_{\min}(\mathbf{M}^{\ptca}_\epsilon)}
	\leq \kappa_2(\mathbf{M}^{\ptca}_\epsilon).
\end{equation*}
Using arguments analogous to Theorem~\ref{thm:spectral_equiv}, we have
\begin{equation*}
	\lambda_{\max}(\mathbf{M}^{\ptca}_\epsilon) \leq \sup_{\widehat{\mathbf{x}}\in\widehat{\Omega}} \lambda_{\max}(\mathcal{C}^{(\ptca)}(\widehat{\mathbf{x}})) \max_{k=1,2,3} \lambda_{\max}(\widehat{\mathbf{M}}_{k,k}), 
	\quad
	\lambda_{\min}(\mathbf{M}^{\ptca}_\epsilon) \geq \inf_{\widehat{\mathbf{x}}\in\widehat{\Omega}} \lambda_{\min}(\mathcal{C}^{(\ptca)}(\widehat{\mathbf{x}})) \min_{k=1,2,3} \lambda_{\min}(\widehat{\mathbf{M}}_{k,k}).
\end{equation*}
Applying~\cite[Lemma~1]{LOLI2022245}, we conclude that
\begin{equation*}
	C_0^2 := \max_{\ptca=1,\dots,\nptc}  \left(\sup_{\widehat{\mathbf{x}}\in\widehat{\Omega}} \kappa_2(\mathcal{C}^{(\ptca)}(\widehat{\mathbf{x}}))\right) \max_{k=1,2,3} \kappa_2(\widehat{\mathbf{M}}_{k,k})
\end{equation*}
is a positive constant independent of the meshes $\mathbf{\Xi}^{(\ptca)}$, but possibly dependent on $\mathbf{p}$, $\mathbf{F}$, and $\epsilon$, so that inequality~\eqref{eq:stab_dec} holds.
\item \textbf{Strengthened Cauchy-Schwarz inequalities.}
We have to prove that for any $\ptca,\ptcb = 1,\ldots,\nptc$ there exists $\varepsilon_{\ptca,\ptcb} \in [0,1]$ such that
\begin{equation*}
	a(v^{(\ptca)},v^{(\ptcb)}) \leq \varepsilon_{\ptca,\ptcb} a(v^{(\ptca)},v^{(\ptca)})^{1/2} a(v^{(\ptcb)},v^{(\ptcb)})^{1/2},
	\qquad
	\forall\, v^{(\ptca)} \in V_{\ptca},\ v^{(\ptcb)} \in V_{\ptcb}.
\end{equation*}
We define
\begin{equation*}
	\varepsilon_{\ptca,\ptcb} :=
	\begin{cases}
		1, & \text{if } \ptcb \in \mathcal{N}_{\mathrm{int}, \ptca}, \\
		0, & \text{otherwise (non-interacting patches)}.
	\end{cases}
\end{equation*}
For interacting patches, the estimate follows from the Cauchy--Schwarz inequality. For non-interacting patches, the supports are disjoint and the bilinear form vanishes.\\
Let $\mathcal{E} = \{\varepsilon_{i,j}\}$ denote the resulting interaction matrix.  
Since each patch interacts with at most $\nint$ patches, every row of $\mathcal{E}$ contains at most $\nint$ nonzero entries, all equal to~1, and hence
\begin{equation*}
	\rho(\mathcal{E}) \leq \nint.
\end{equation*}
\item \textbf{Local stability.}
We have to prove that there exists a constant $\omega>0$ such that, for every $\ptca=1,\ldots,\nptc$ and for every $u^{(\ptca)} \in V_{\ptca}$,
\begin{equation} \label{eq:local_stability}
	a(u^{(\ptca)},u^{(\ptca)}) \leq \omega\, \widetilde{a}_{\ptca}(u^{(\ptca)},u^{(\ptca)}).
\end{equation}
For any $u^{(\ptca)} \in V_{\ptca}$, the support of $u^{(\ptca)}$ is contained in the patch $\Omega_{\ptca}$ and the neighboring patches indexed by $\mathcal{N}_{\ptca}$. Therefore,
\begin{equation*}
	a(u^{(\ptca)},u^{(\ptca)}) = \|u^{(\ptca)}\|^{2}_{L^{2}_{\epsilon}\Lambda^{1}(\Omega)}
	= \sum_{\ptcb \in \mathcal{N}_{\ptca}} \left\| u^{(\ptca)}\big|_{\Omega_{\ptcb}} \right\|^{2}_{L^{2}_{\epsilon}\Lambda^{1}(\Omega_{\ptcb})}.
\end{equation*}
Let $\mathbf{u}$ denote the coordinate vector of $u^{(\ptca)}|_{\Omega_{\ptca}}$ with respect to the single patch 1-form basis $\mathcal{B}^1$ defined on $\Omega_{\ptca}$. Then
\begin{equation*}
\widetilde{a}_{\ptca}(u^{(\ptca)},u^{(\ptca)}) = \left\| u^{(\ptca)}\big|_{\Omega_{\ptca}} \right\|^{2}_{L^{2}_{\epsilon}\Lambda^{1}(\Omega_{\ptca})} = \mathbf{u}^{\top}\,\mathbf{M}^{\ptca}_{\epsilon}\,\mathbf{u}.
\end{equation*}
For an adjacent patch $\Omega_{\ptcb}$, the $L^{2}_{\epsilon}\Lambda^{1}(\Omega_{\ptcb})$-norm of the restriction $u^{(\ptca)}|_{\Omega_{\ptcb}}$ can be represented as
\begin{equation*}
	\left\| u^{(\ptca)}\big|_{\Omega_{\ptcb}} \right\|^{2}_{L^{2}_{\epsilon}\Lambda^{1}(\Omega_{\ptcb})}
	= \mathbf{u}^{\top}\, \mathbf{J}_{\ptcb,\ptca}^{\top}\, \mathbf{M}^{\ptcb}_{\epsilon}\, \mathbf{J}_{\ptcb,\ptca}\, \mathbf{u},
\end{equation*}
where $\mathbf{J}_{\ptcb,\ptca} \in \mathbb{R}^{N_{\mathrm{dof},\ptcb} \times N_{\mathrm{dof},\ptca}}$ is a rectangular matrix whose only nonzero entries (equal to $\pm1$) appear in the rows and columns corresponding to the degrees of freedom on the interface between $\Omega_{\ptca}$ and $\Omega_{\ptcb}$, with at most one such entry per row and column.
Consequently, $\mathbf{J}_{\ptcb,\ptca}^{\top}\mathbf{J}_{\ptcb,\ptca}$ is diagonal with diagonal entries equal to $0$ or $\pm 1$, and thus
\begin{equation*}
	\lambda_{\max}\left(\mathbf{J}_{\ptcb,\ptca}^{\top}\mathbf{J}_{\ptcb,\ptca}\right) \leq 1.
\end{equation*}
Applying~\cite[Lemma~1]{LOLI2022245} to both patches $\Omega_{\ptca}$ and $\Omega_{\ptcb}$, we obtain
\begin{align*}
	\mathbf{u}^{\top}\mathbf{J}_{\ptcb}^{\top}\, \mathbf{M}^{\ptcb}_{\epsilon}\, \mathbf{J}_{\ptcb}\mathbf{u}
	&\leq \lambda_{\max} \left(\mathbf{J}_{\ptcb}^{\top}\mathbf{J}_{\ptcb}\right)\,
	\lambda_{\max}\left(\mathbf{M}^{\ptcb}_{\epsilon}\right)\,
	\|\mathbf{u}\|^{2}_{2} \\
	&\leq \frac{\lambda_{\max}\left(\mathbf{M}^{\ptcb}_{\epsilon}\right)}
	{\lambda_{\min}\left(\mathbf{M}^{\ptca}_{\epsilon}\right)}
	\mathbf{u}^{\top}\mathbf{M}^{\ptca}_{\epsilon}\mathbf{u} \leq C_{\ptca,\ptcb}\, \mathbf{u}^{\top}\mathbf{M}^{\ptca}_{\epsilon}\mathbf{u},
\end{align*}
for a constant $C_{\ptca,\ptcb}>0$ that is independent of the mesh $\mathbf{\Xi}$, but may depend on $\mathbf{p}$, $\mathbf{F}$, and $\epsilon$.

Summing over all adjacent patches $\ptcb \in \mathcal{N}_{\ptca}$, we obtain
\begin{equation*}
	a(u^{(\ptca)},u^{(\ptca)}) \leq \left( \sum_{\ptcb \in \mathcal{N}_{\ptca}} C_{\ptca,\ptcb} \right)
	\mathbf{u}^{\top}\mathbf{M}^{\ptca}_{\epsilon}\mathbf{u}.
\end{equation*}
Finally, using the definition
\begin{equation}\label{eq:omega}
	\omega := \nadj \max_{\ptca,\ptcb=1,\ldots,\nptc} C_{\ptca,\ptcb},
\end{equation}
we conclude that inequality~\eqref{eq:local_stability} holds for all $u^{(\ptca)} \in V_{\ptca}$.
\end{itemize}
Finally,~\cite[Theorem 2.7]{toselli2006domain} implies
\begin{equation*}
	\kappa_2 \left( \pad^{-\frac{1}{2}}{\mathbf M}_\epsilon \pad^{-\frac{1}{2}} \right) \leq \omega C_0^2 (\rho(\mathcal{E})+1) \leq C \nadj (\nint +1),
\end{equation*}
where we have set $\displaystyle C=C_0^2 \max_{\ptca,\ptcb=1,\ldots,\nptc} C_{\ptca,\ptcb}$.
\end{proof}
\begin{remark}\label{rem:additive_schwarz_performance}
As in the single-patch case, the preceding result implies that the number of iterations required for the conjugate gradient method to converge, when preconditioned with $\pad$, is independent of the mesh size. However, unlike the single-patch setting, the iteration count in the continuous Galerkin multi-patch formulation may depend on the spline degree. Furthermore, the proposed additive Schwarz preconditioner is \emph{scalable}: the condition number of the preconditioned operator does not depend on the total number of patches, but solely on the maximum number of adjacent patches.
\end{remark}
The results stated in Theorem~\ref{thm:additive_schwarz} and Remark~\ref{rem:additive_schwarz_performance} remain valid if the local solver ${\mathbf{M}}^{\ptca}_\epsilon$ is replaced by any spectrally equivalent preconditioner, such as $\overline{\mathbf{P}}_{\mathbf{I}}$ or $\overline{\mathbf{P}}_{\overline{\mathcal{C}}}$, in the definition of $\widetilde{a}_{\ptca}$ in~\eqref{eq:local_solver}. This follows directly from the spectral equivalence established in Theorem~\ref{thm:spectral_equiv}, which guarantees that the properties of \emph{stable decomposition} and \emph{local stability} remain valid.  
\subsection{Computational complexity}
The mass matrix ${\mathbf{M}}_{\epsilon}$, as well as all the preconditioners considered in this paper, is symmetric and positive definite. Therefore, the PCG method can be effectively applied to solve the resulting linear systems. We remark that both the additive Schwarz preconditioner for the continuous Galerkin formulation and the block diagonal preconditioner for the symmetric discontinuous Galerkin formulation apply single patch preconditioners independently to each patch. This structure supports a highly parallelizable implementation, enhancing computational efficiency.

We now analyze the computational costs associated with the single patch preconditioners $\overline{\mathbf{P}}_{\mathbf{I}}$ and $\overline{\mathbf{P}}_{\overline{\mathcal{C}}}$ described in Section~\ref{sec:single_patch_prec}. We distinguish between the \emph{setup} operations, which are executed only once before the PCG solver begins, and the \emph{application} operations, which are performed at each PCG iteration.
The costs of these operations should be compared with that of the residual computation that is performed at each iteration of PCG. 
For simplicity, in the following we consider the same degree $p$ and mesh size in all directions. Each column of the matrix ${\mathbf{M}}_{\epsilon}$, whose order is denoted with $\ndof$, contains at most $3 (2 p + 1)^2 (2p-1)$ nonzero entries, thus the number of floating-point operations (FLOPs) required to compute the residual is $\mathcal{O}(p^3 \ndof)$.

The setup of both $\overline{\mathbf{P}}_{\mathbf{I}}$ and $\overline{\mathbf{P}}_{\overline{\mathcal{C}}}$ requires assembling the mass matrices for the univariate spaces and computing the diagonal scaling. If we assume, here and in the remainder of this section, that the number of degrees of freedom is roughly the same in each direction, the combined cost of these steps is $\mathcal{O}(p^3 \ndof^{1/3} + \ndof)$ FLOPs. 
Furthermore, the setup of $\overline{\mathbf{P}}_{\overline{\mathcal{C}}}$ requires computing the separate variable approximations~\eqref{eq:separable_factorization}, whose cost is proportional to the number of quadrature nodes, i.e., $\mathcal{O}(p^3 \ndof)$ FLOPs, as well as the generalized eigendecompositions~\eqref{eq:gen_eigs} in Algorithm~\ref{alg:fast_diag}, whose cost is $\mathcal{O}(\ndof)$ FLOPs.

Regarding the application operations, both  $\overline{\mathbf{P}}_{\mathbf{I}}$ and $\overline{\mathbf{P}}_{\overline{\mathcal{C}}}$ require the solution of few linear systems with Kronecker matrices, whose Kronecker factors are banded matrices of order roughly $\frac{1}{9}\ndof^{1/3}$ and bandwidth $p$. This step has a computational cost of 
$\mathcal{O}(p \ndof)$ FLOPs, see e.g.~\cite{LOLI2022245} for details. Furthermore, the application of $\overline{\mathbf{P}}_{\overline{\mathcal{C}}}$ requires to compute matrix-vector products with the off-diagonal block $\widehat{\mathbf{M}}_{1,2}$ and its transpose, whose cost is analogous to the previous one, as well as with the dense eigenvector matrix $\mathbf{Q}_3 \otimes \mathbf{Q}_2 \otimes \mathbf{Q}_1$ and its transpose, in the application of Algorithm~\ref{alg:fast_diag}. Although the theoretical cost of the latter operation is $\mathcal{O}(\ndof^{4/3})$, previous observations in~\cite{loli20202586} and our numerical experiments in Section~\ref{sec:num} indicate that this operation is very fast in practice.

The overall computational complexities of the proposed single patch preconditioners are summarized in Table~\ref{tab:compCost}.
\begin{table}[htbp]
\centering
\begin{tabular}{rcc}
	\toprule
	& $\overline{\mathbf{P}}_{\mathbf{I}}$ & $\overline{\mathbf{P}}_{\overline{\mathcal{C}}}$ \\
	\midrule
	Setup & $\mathcal{O}(p^3 \ndof^{1/3} + \ndof)$ & $\mathcal{O}(p^3 \ndof)$ \\
	Application & $\mathcal{O}(p \ndof)$ & $\mathcal{O}(\ndof^{4/3})$ \\
	\bottomrule
\end{tabular}
\caption{Summary of the computational complexity of the proposed single patch preconditioner.}
\label{tab:compCost}
\end{table}
\begin{remark}
In a time-marching scheme, at each time step, we need to solve a linear system associated with the same matrices, $\mathbf{M}_{\epsilon}$ and $\widetilde{\mathbf{M}}_{\mu}$. Consequently, the same preconditioner can be used throughout the entire process. This implies that the setup procedures for the preconditioner only need to be performed once at the beginning of the time-marching scheme. As a result, the computational cost of these setup procedures becomes negligible compared to the application procedures, which must be executed at every time step and every PCG iteration.
\end{remark}
\section{Numerical results}\label{sec:num}
In this section, we present numerical experiments to assess the robustness and computational efficiency of the proposed preconditioners. Specifically, we evaluate the number of iterations required for the PCG method to converge when solving the linear systems associated with $\mathbf{M}_{\epsilon}$, setting $\epsilon = 1$. The right-hand side is randomly generated, and the initial guess is set to the zero vector.

Throughout this study, we assume uniform spline degrees across all patches, i.e., $p_1 = p_2 = p_3 = p$. Mesh refinement is performed by uniformly subdividing the three knot vectors that define the geometric spline parametrization $\mathbf{F}_{\ptca}$, resulting in $n_{\text{sub}}$ subdivisions in each parametric direction.

All numerical experiments were conducted on a computing cluster equipped with two Intel Xeon 16-Core 6130 CPUs (2.1 GHz, 22 MB cache) and 128~GB of RAM. The simulations were carried out using \textsc{MATLAB R2023a}, in combination with the GeoPDEs toolbox~\cite{VAZQUEZ2016523} and the Tensorlab package~\cite{tensorlab30} for CPD and Kronecker matrix-vector products.
Algorithm~\ref{alg:sep_approx} was executed with \texttt{maxIter}~$= 3$ iterations. The PCG method was implemented using \textsc{MATLAB}'s built-in solver, with the convergence tolerance set to $\tau = 10^{-8}$.
Although the proposed algorithms are designed to support highly parallel implementations, all experiments in this section are executed in a single-threaded environment without parallel processing or multi-threading techniques. This approach was chosen to emphasize the inherent efficiency of the proposed methods.
\subsection{Example 1: Single patch preconditioners}
In this example, we compare the performance of $\overline{\mathbf{P}}_{\mathbf{I}}$ and $\overline{\mathbf{P}}_{\overline{\mathcal{C}}}$ on the single patch curved domain depicted in Figure~\ref{fig:singlepatch}.
\begin{figure}[htbp]
	\centering
	\includegraphics[width=0.49\textwidth]{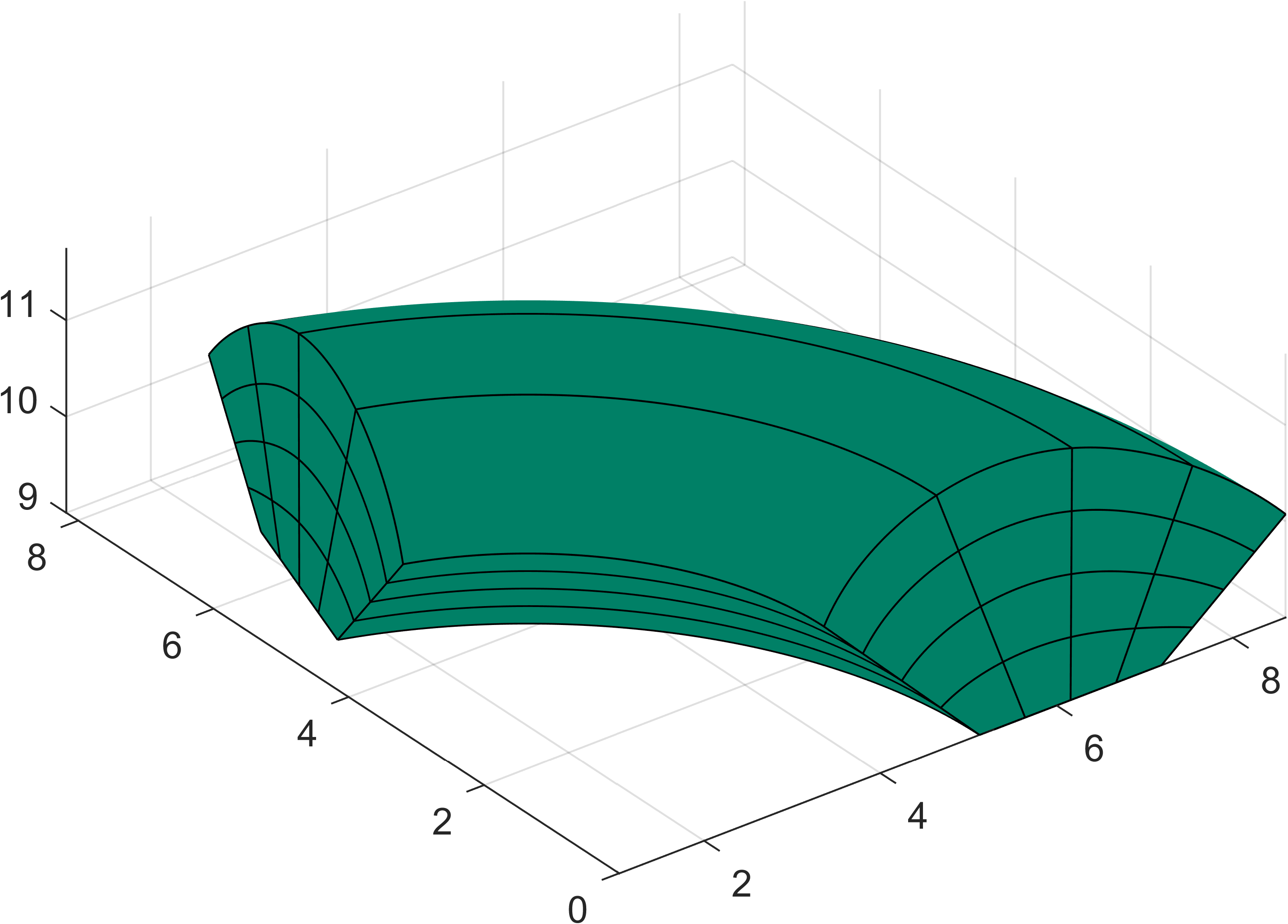}
	\caption{Single patch curved domain.
	}
	\label{fig:singlepatch}
\end{figure}

In Table~\ref{tab:ITERvolPtc2E}, we present the number of PCG iterations required to solve a system associated with $\mathbf{M}_{\epsilon}$ for various spline degrees $p$ and different numbers of subdivisions $n_{\text{sub}}$. The results indicate that both preconditioners, $\overline{\mathbf{P}}_{\mathbf{I}}$ and $\overline{\mathbf{P}}_{\overline{\mathcal{C}}}$, exhibit robustness with respect to the spline degree and the number of subdivisions. Specifically, the iteration count remains relatively stable, with the number of iterations consistently staying below $24$ for $\overline{\mathbf{P}}_{\mathbf{I}}$ and $11$ for $\overline{\mathbf{P}}_{\overline{\mathcal{C}}}$. 
Note that, on the finest discretization level, $\ndof \approx6.6$ millions.
As expected, $\overline{\mathbf{P}}_{\overline{\mathcal{C}}}$ outperforms $\overline{\mathbf{P}}_{\mathbf{I}}$, suggesting that $\overline{\mathcal{C}}_{k,\ell}$ provides a good approximation of $\mathcal{C}_{k,\ell}$. 
\pgfplotstableread[col sep=comma]{tables/bdPerfITERvolPtc2E.csv}\tabBD
\pgfplotstableread[col sep=comma]{tables/fdPerfITERvolPtc2E.csv}\tabBK

\begin{table}[htbp]
\centering
\pgfplotstabletypeset[columns={nsub,ItsEp2,ItsEp3,ItsEp4,ItsEp5}, 
column type=c, 
every head row/.style={output empty row,before row={\toprule \multirow{2}{*}{$n_{\text{sub}}$} & \multicolumn{4}{c}{$\overline{\mathbf{P}}_{\mathbf{I}}$}\\ \cmidrule(lr){2-5} & $p=2$ & $p=3$ & $p=4$ & $p=5$\\}, after row=\midrule},
every last row/.style={after row=\bottomrule},
columns/nsub/.style ={column name={}},
columns/ItsEp2/.style ={column name={},column type=r},
columns/ItsEp3/.style ={column name={},column type=r},
columns/ItsEp4/.style ={column name={},column type=r,string type},
columns/ItsEp5/.style ={column name={},column type=r,string type}
]\tabBD
\pgfplotstabletypeset[columns={ItsEp2,ItsEp3,ItsEp4,ItsEp5}, 
column type=c, 
every head row/.style={output empty row,before row={\toprule \multicolumn{4}{c}{$\overline{\mathbf{P}}_{\overline{\mathcal{C}}}$}\\ \cmidrule(lr){1-4} $p=2$ & $p=3$ & $p=4$ & $p=5$\\}, after row=\midrule},
every last row/.style={after row=\bottomrule},
columns/ItsEp2/.style ={column name={},column type=r},
columns/ItsEp3/.style ={column name={},column type=r},
columns/ItsEp4/.style ={column name={},column type=r,string type},
columns/ItsEp5/.style ={column name={},column type=r,string type}
]\tabBK
\caption{Example 1: PCG iterations for solving a system associated with ${\mathbf{M}}_{\epsilon}$ in the single patch curved domain, as shown in Figure~\ref{fig:singlepatch}. Entries marked with $*$ indicate cases where the problem exceeded the capacity of the available computational resources.}
\label{tab:ITERvolPtc2E}
\end{table}
Furthermore, Figure~\ref{fig:ITERvolPtc2E} reports the wall-clock time for both the setup and solution phases, considering both preconditioners. We emphasize that the application time of $\overline{\mathbf{P}}_{\overline{\mathcal{C}}}$ appears to scale as $\ndof$ for the considered discretization levels, despite the $\mathcal{O}(\ndof^{4/3})$ complexity of the fast diagonalization method (see Table~\ref{tab:compCost}). A similar behavior was already observed e.g. in \cite{sangalliIsogeometricPreconditionersBased2016}, and is probably related to the efficiency of level 3 BLAS routines that compute dense matrix-matrix products.

Moreover, in the setup time of $\overline{\mathbf{P}}_{\mathbf{I}}$, for all the considered discretization levels the $\mathcal{O}(p^3 \ndof^{1/3})$ cost to assemble the mass matrices for the univariate spaces appears to dominate over the $\mathcal{O}(\ndof)$ cost to compute the diagonal scaling. 
Apart from these, the other timings align well with the theoretical complexities presented in Table~\ref{tab:compCost}.
\tikzstyle{Linea1}=[thick,dashed]
\tikzstyle{Linea2}=[thick]
\pgfplotscreateplotcyclelist{Lista1rs}{%
	{Linea2,red,mark=*},
	{Linea1,red,mark=*},
	{Linea2,green,mark=triangle*},
	{Linea1,green,mark=triangle*},
	{Linea2,cyan,mark=square*},
	{Linea1,cyan,mark=square*},
	{Linea2,violet,mark=diamond*},
	{Linea1,violet,mark=diamond*},
}
\def \tabBD{tables/bdPerfITERvolPtc2E.csv}
\def \tabBK{tables/fdPerfITERvolPtc2E.csv}
\begin{figure}[htbp]
	\centering
	\hspace*{\fill}
	\begin{subfigure}[t]{0.4\linewidth}
		\centering
		\begin{tikzpicture}[
			trim axis left
			]
			\begin{loglogaxis}[
				cycle list name=Lista1rs,
				width=\linewidth,
				height=\linewidth,
				xlabel={$\ndof$},
				ymin=1e-3,
				ymax=7e2,
				xminorticks=false,
				yminorticks=false,
				ylabel={Setup time [s]},
				legend columns=1,
				legend style={at={(0.01,0.99)},anchor=north west},
				xmajorgrids=true,
				ymajorgrids=true,
				legend entries={$p=2$,,$p=3$,,$p=4$,,$p=5$,}
				]
				\addplot table [x=IDofsEp2, y=TsetEp2, col sep=comma]{\tabBD};
				\addplot table [x=IDofsEp2, y=TsetEp2, col sep=comma]{\tabBK};
				\addplot table [x=IDofsEp3, y=TsetEp3, col sep=comma]{\tabBD};
				\addplot table [x=IDofsEp3, y=TsetEp3, col sep=comma]{\tabBK};
				\addplot table [x=IDofsEp4, y=TsetEp4, col sep=comma]{\tabBD};			
				\addplot table [x=IDofsEp4, y=TsetEp4, col sep=comma]{\tabBK};
				\addplot table [x=IDofsEp5, y=TsetEp5, col sep=comma]{\tabBD};			
				\addplot table [x=IDofsEp5, y=TsetEp5, col sep=comma]{\tabBK};
				\logLogSlopeTriangle{0.85}{0.5}{0.1}{1/3}{black};
				\logLogSlopeTriangle{0.85}{0.2}{0.6}{1}{black};
			\end{loglogaxis}
		\end{tikzpicture}
	\end{subfigure}
	\hfill
	\begin{subfigure}[t]{0.4\linewidth}
		\centering
		\begin{tikzpicture}[
			trim axis right
			]
			\begin{loglogaxis}[
				cycle list name=Lista1rs,
				width=\linewidth,
				height=\linewidth,
				xlabel={$\ndof$},
				ymin=1e-3,
				ymax=7e2,
				xminorticks=false,
				yminorticks=false,
				ylabel={PCG time [s]},
				legend columns=1,
				legend style={at={(0.01,0.99)},anchor=north west},
				xmajorgrids=true,
				ymajorgrids=true,
				legend entries={$p=2$,,$p=3$,,$p=4$,,$p=5$,}
				]
				\addplot table [x=IDofsEp2, y=TprecEp2, col sep=comma] {\tabBD};
				\addplot table [x=IDofsEp2, y=TprecEp2, col sep=comma] {\tabBK};
				\addplot table [x=IDofsEp3, y=TprecEp3, col sep=comma] {\tabBD};
				\addplot table [x=IDofsEp3, y=TprecEp3, col sep=comma] {\tabBK};
				\addplot table [x=IDofsEp4, y=TprecEp4, col sep=comma] {\tabBD};				
				\addplot table [x=IDofsEp4, y=TprecEp4, col sep=comma] {\tabBK};
				\addplot table [x=IDofsEp5, y=TprecEp5, col sep=comma] {\tabBD};				
				\addplot table [x=IDofsEp5, y=TprecEp5, col sep=comma] {\tabBK};
				\logLogSlopeTriangle{0.85}{0.2}{0.5}{1}{black};
			\end{loglogaxis}
		\end{tikzpicture}
	\end{subfigure}
	\hspace*{\fill}
	\caption{Example 1: time required to solve a system associated with $\mathbf{M}_{\epsilon}^1$ in the single patch curved domain, as shown in Figure~\ref{fig:singlepatch}. Results are presented for $\overline{\mathbf{P}}_{\mathbf{I}}$ (solid lines \rule[0.5ex]{0.45cm}{0.5pt}) and $\overline{\mathbf{P}}_{\overline{\mathcal{\mathfrak}{C}}}$ (dashed lines \rule[0.5ex]{0.15cm}{0.5pt}~\rule[0.5ex]{0.15cm}{0.5pt}~\rule[0.5ex]{0.15cm}{0.4pt}).}
	\label{fig:ITERvolPtc2E}
\end{figure}
\subsection{Example 2: Continuous Galerkin multi-patch preconditioners}
To evaluate the performance differences between $\overline{\mathbf{P}}_{\mathbf{I}}$ and $\overline{\mathbf{P}}_{\overline{\mathcal{C}}}$ as local solvers within the additive Schwarz preconditioner discussed in Section~\ref{sec:cont_gal_prec}, we consider multi-patch domains inspired by the ITER tokamak, as illustrated in Figure~\ref{fig:ITER_mp}. The 2D cross-section used to generate this domain was extracted from the 3D-printable CAD files provided by the ITER Organization \cite{iter_tokamak_3d_printing}. Specifically, to analyze the scalability of the preconditioners, we examine four different geometries constructed by successively combining 2, 3, 4, and 5 sectors. Each sector is generated by rotating the two-dimensional multi-patch cross-section shown in Figure~\ref{fig:iterSec} through an angle of $2\pi/5$ radians.
\begin{figure}[htbp]
	\centering
	\begin{subfigure}{0.2\textwidth}
		\includegraphics[width=\textwidth]{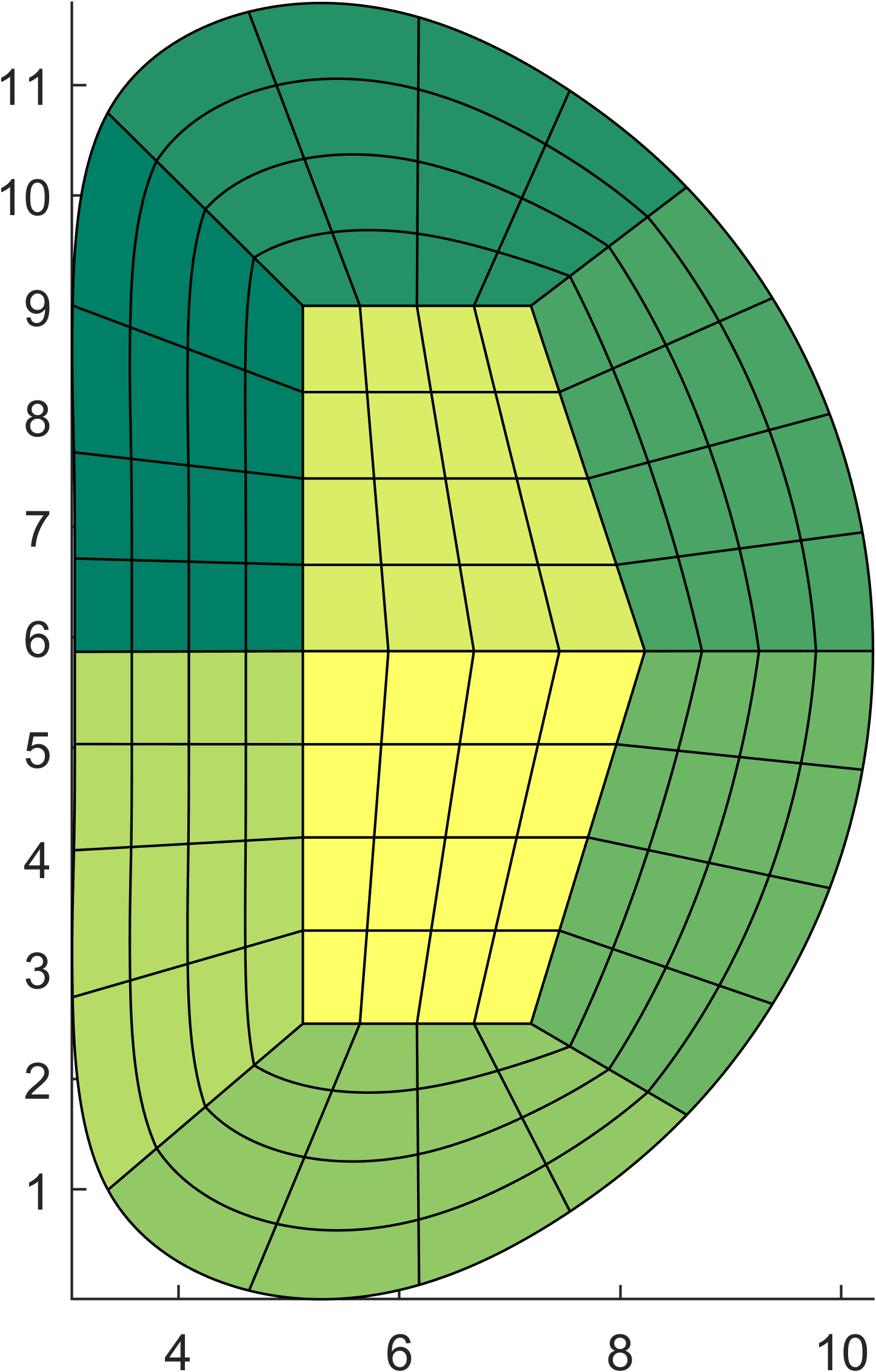}
		\caption{}
		\label{fig:iterSec}
	\end{subfigure}
	\\
	\begin{subfigure}{0.4\textwidth}
		\includegraphics[width=\textwidth]{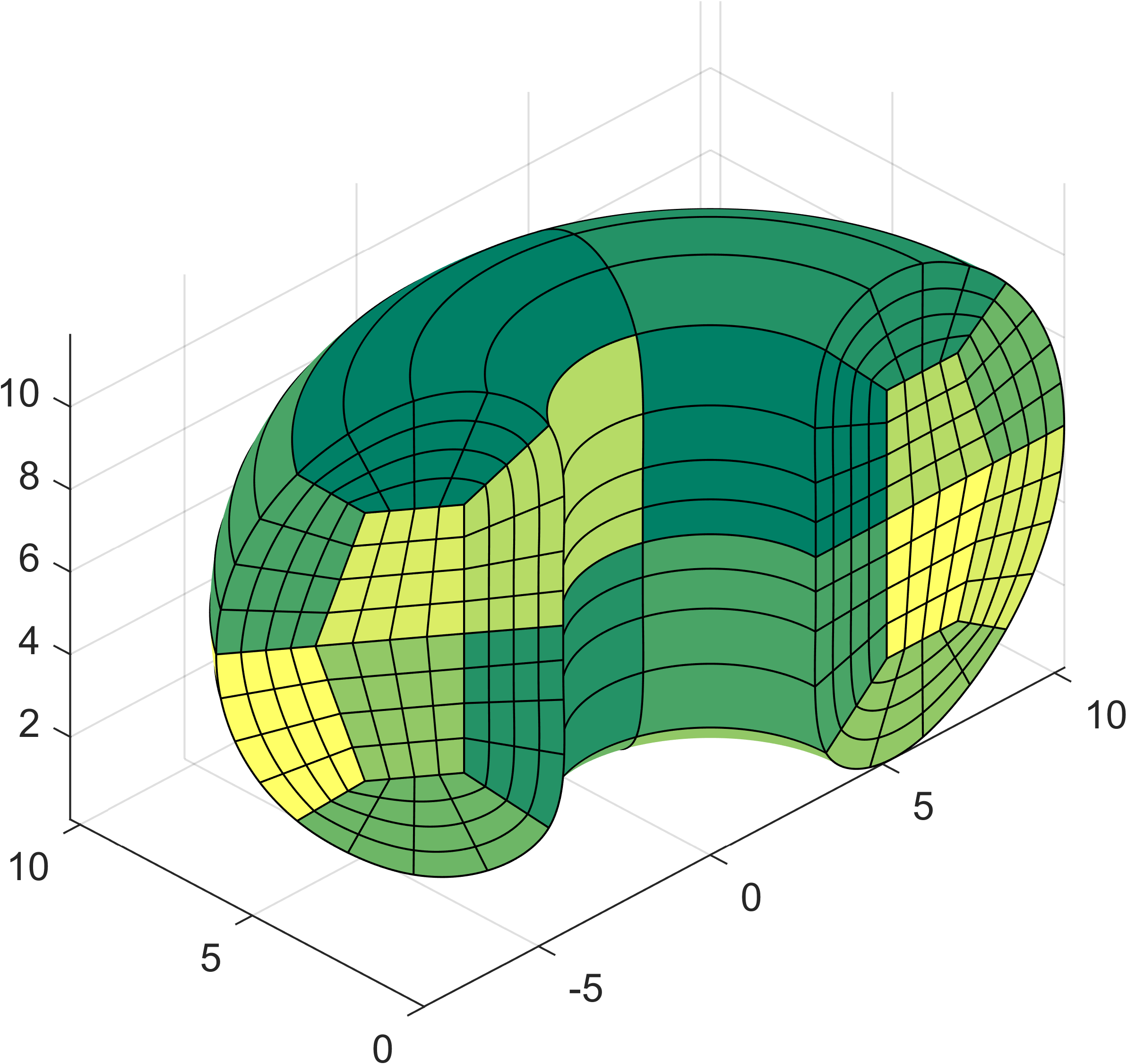}
		\caption{}
		\label{fig:iterVol2}
	\end{subfigure}
	\hfill
	\begin{subfigure}{0.4\textwidth}
		\includegraphics[width=\textwidth]{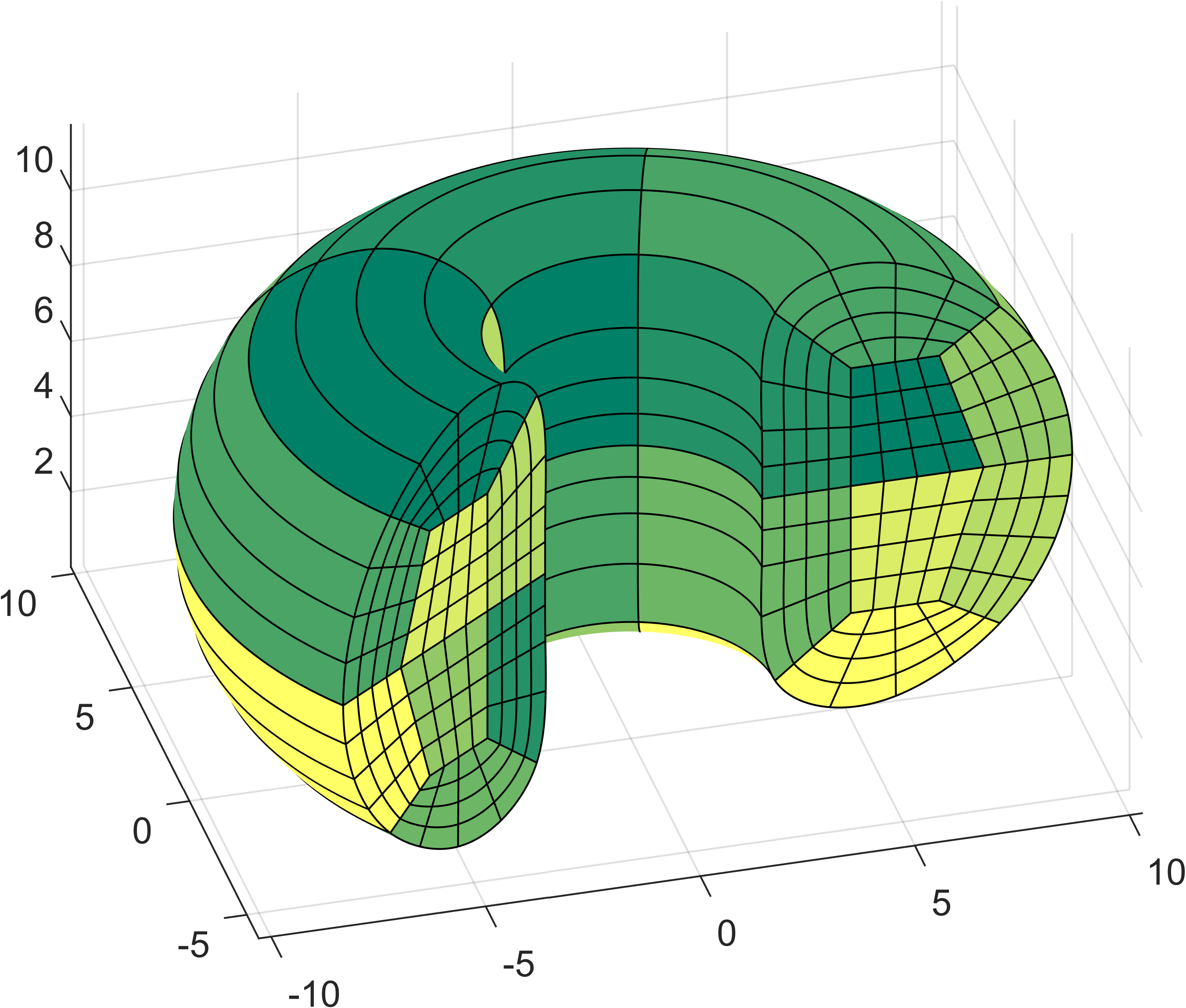}
		\caption{}
		\label{fig:iterVol3}
	\end{subfigure}
	\\
	\begin{subfigure}{0.4\textwidth}
		\includegraphics[width=\textwidth]{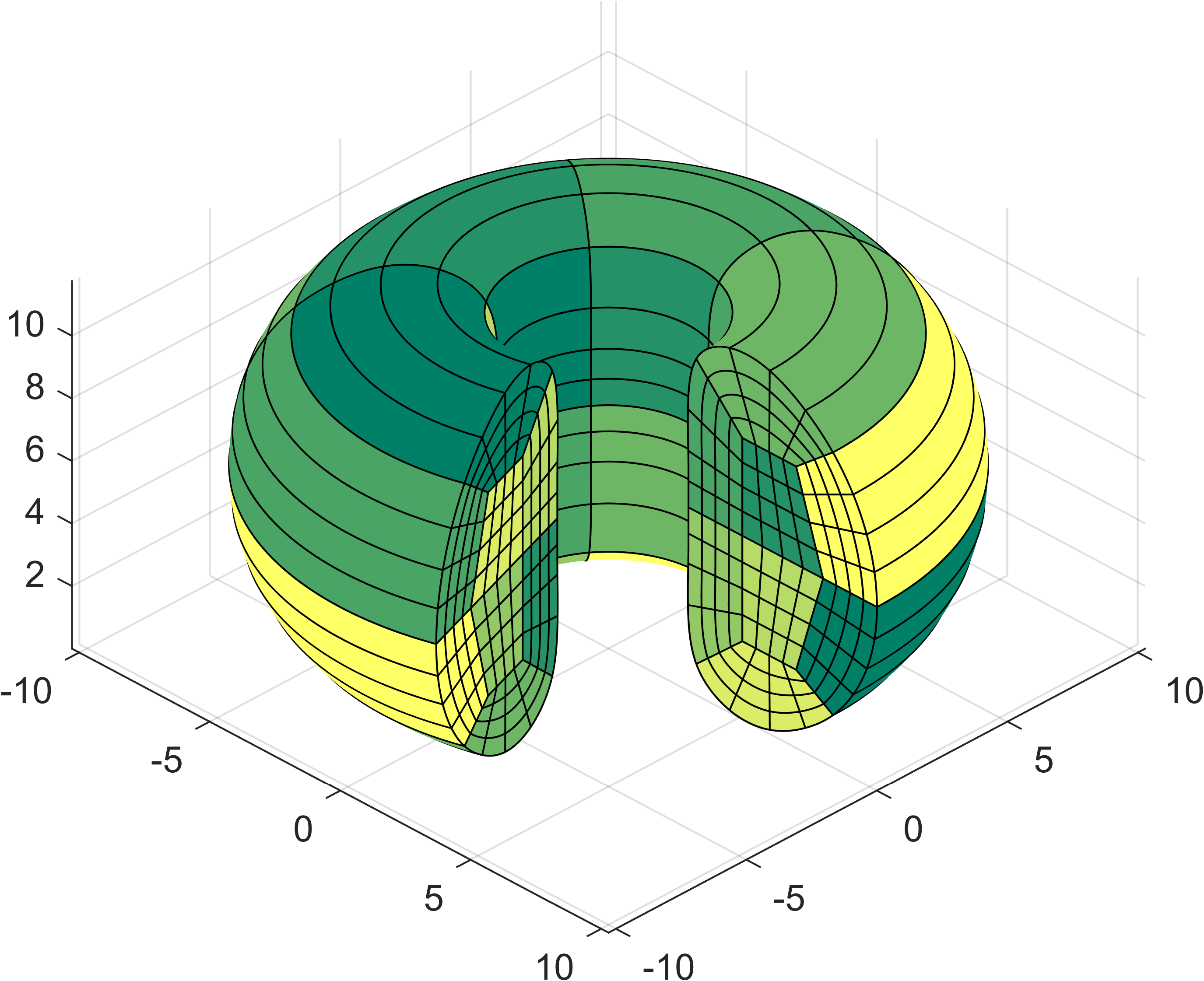}
		\caption{}
		\label{fig:iterVol4}
	\end{subfigure}
	\hfill
	\begin{subfigure}{0.4\textwidth}
		\includegraphics[width=\textwidth]{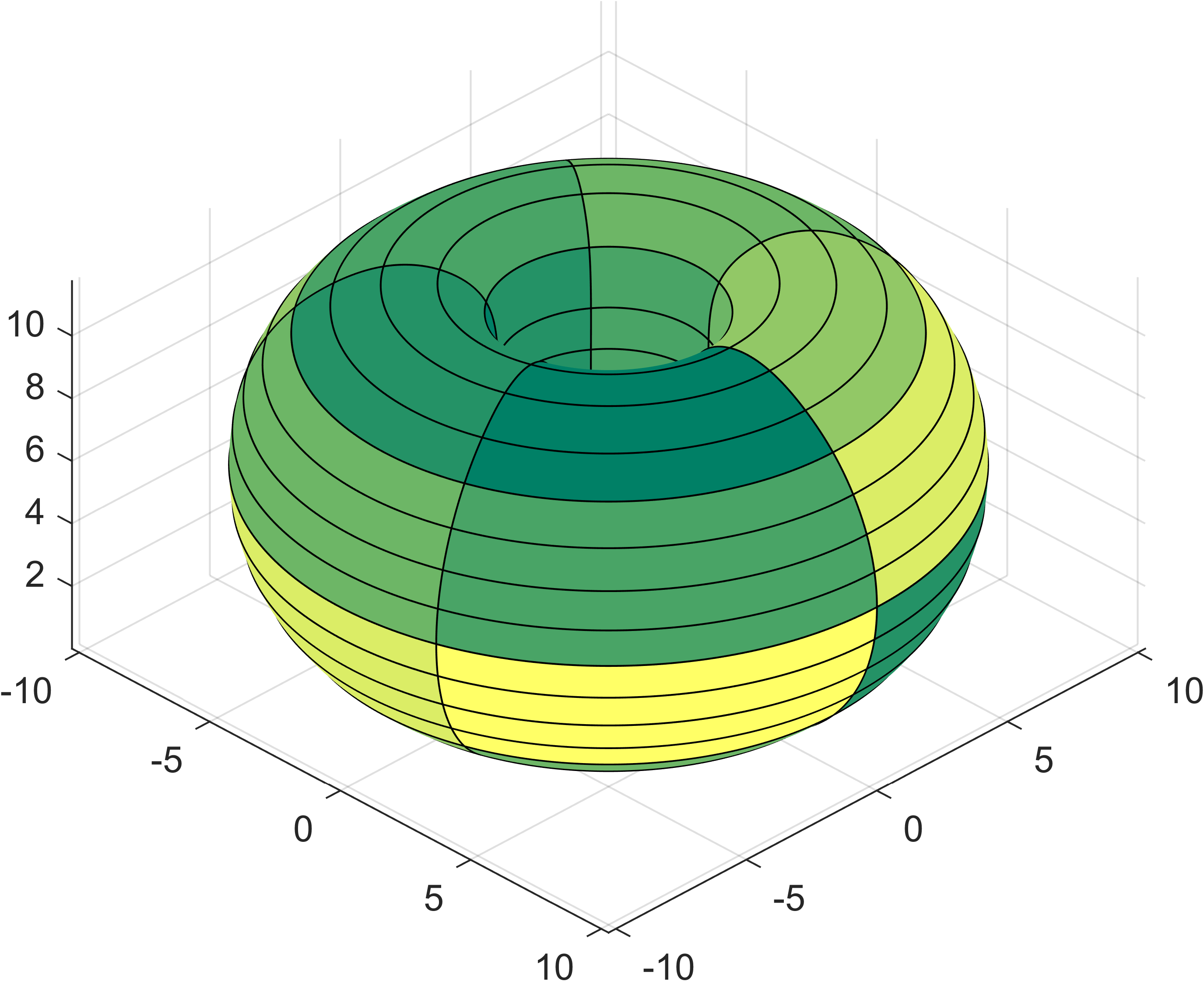}
		\caption{}
		\label{fig:iterVol5}
	\end{subfigure}
	\caption{Multi-patch domains inspired by the ITER tokamak: (a) Patch representation in section view, (b) 3D representation with 2 sectors, (c) 3D representation with 3 sectors, (d) 3D representation with 4 sectors, (e) 3D representation with 5 sectors.}
	\label{fig:ITER_mp}
\end{figure}

Table~\ref{tab:ITERvolMpE} demonstrates that the favorable performance of the proposed single-patch preconditioners extends to the additive Schwarz method within the continuous Galerkin formulation, as predicted by Theorem~\ref{thm:additive_schwarz}. Specifically, the PCG iteration count is relatively robust with respect to the number of subdivisions (it actually seems to approach a constant value as the number of subdivions grows) and exhibit only a mild dependence on the spline degree. Furthermore, the overall number of iterations remains quite low across all test cases, particularly when $\overline{\mathbf{P}}_{\overline{\mathcal{C}}}$ is employed as the local solver. We note that the largest problem size considered entails approximately $33$ million degrees of freedom.

To evaluate scalability, we compare results across the four domain configurations. Although all domains are composed of identically repeated patches, their incidence patterns differ slightly. Specifically, the maximal adjacency number is $\nadj = 12$ in the two-sector geometry, whereas it stabilizes at $\nadj = 17$ for the remaining configurations. Because we always consider at least 4 elements per parametric direction for each patch, the actual maximal interaction number is strictly bounded: it is $\nint = 14$ in the two-sector geometry and stabilizes at $\nint = 21$ in the other configurations. Ultimately, the iteration count is practically independent of the number of sectors and, consequently, of the total number of patches.

The wall-clock times for the preconditioner setup and the application of PCG, shown in Figure~\ref{fig:ITERvolMpE}, exhibit scaling behavior consistent with that observed in the single-patch test cases.
\pgfplotstableread[col sep=comma]{tables/bdPerfITERvol2slE.csv}\tabBDd
\pgfplotstableread[col sep=comma]{tables/bdPerfITERvol3slE.csv}\tabBDt
\pgfplotstableread[col sep=comma]{tables/bdPerfITERvol4slE.csv}\tabBDq
\pgfplotstableread[col sep=comma]{tables/bdPerfITERvol5slE.csv}\tabBDc
\pgfplotstableread[col sep=comma]{tables/fdPerfITERvol2slE.csv}\tabBKd
\pgfplotstableread[col sep=comma]{tables/fdPerfITERvol3slE.csv}\tabBKt
\pgfplotstableread[col sep=comma]{tables/fdPerfITERvol4slE.csv}\tabBKq
\pgfplotstableread[col sep=comma]{tables/fdPerfITERvol5slE.csv}\tabBKc

\pgfplotstablevertcat{\tabBDd}{\tabBDt}
\pgfplotstablevertcat{\tabBDd}{\tabBDq}
\pgfplotstablevertcat{\tabBDd}{\tabBDc}
\pgfplotstablevertcat{\tabBKd}{\tabBKt}
\pgfplotstablevertcat{\tabBKd}{\tabBKq}
\pgfplotstablevertcat{\tabBKd}{\tabBKc}

\def\segmentsList{}


\pgfplotstablecreatecol[create on use/segments/.style={string type}]{segments}{\tabBDd}

\begin{table}[htbp]
	\centering
	\pgfplotstabletypeset[
	columns={segments,nsub,ItsEp2,ItsEp3,ItsEp4}, 
	column type=c, 
	every head row/.style={output empty row,before row={\toprule \multirow{2}{*}{Sectors} & \multirow{2}{*}{$n_{\text{sub}}$} & \multicolumn{3}{c}{$\overline{\mathbf{P}}_{\mathbf{I}}$}\\ \cmidrule(lr){3-5} & & $p=2$ & $p=3$ & $p=4$\\}, after row=\midrule},
	every nth row={5}{before row=\midrule},
	every last row/.style={after row=\bottomrule},
	columns/segments/.style={
		assign cell content/.code={%
			\pgfmathparse{ifthenelse(int(\pgfplotstablerow/5)==\pgfplotstablerow/5,int(\pgfplotstablerow/5),-1)}%
			\ifcase\pgfmathresult
			\pgfkeyssetvalue{/pgfplots/table/@cell content}%
			{\multirow{5}{*}{2}}%
			\or
			\pgfkeyssetvalue{/pgfplots/table/@cell content}%
			{\multirow{5}{*}{3}}%
			\or
			\pgfkeyssetvalue{/pgfplots/table/@cell content}%
			{\multirow{5}{*}{4}}%
			\or
			\pgfkeyssetvalue{/pgfplots/table/@cell content}%
			{\multirow{5}{*}{5}}%
			\else
			\pgfkeyssetvalue{/pgfplots/table/@cell content}{}
			\fi
		},
	},
	columns/nsub/.style ={column name={},column type=c, int detect},
	columns/ItsEp2/.style ={column name={},column type=r},
	columns/ItsEp3/.style ={column name={},column type=r,string type},
	columns/ItsEp4/.style ={column name={},column type=r,string type}
	]\tabBDd
	\pgfplotstabletypeset[columns={ItsEp2,ItsEp3,ItsEp4}, 
	column type=c, 
	every head row/.style={output empty row,before row={\toprule \multicolumn{3}{c}{$\overline{\mathbf{P}}_{\overline{\mathcal{C}}}$}\\ \cmidrule(lr){1-3} $p=2$ & $p=3$ & $p=4$\\}, after row=\midrule},
	every nth row={5}{before row=\midrule},
	every last row/.style={after row=\bottomrule},
	columns/ItsEp2/.style ={column name={},column type=r},
	columns/ItsEp3/.style ={column name={},column type=r,string type},
	columns/ItsEp4/.style ={column name={},column type=r,string type},
	]\tabBKd
	\caption{Example 2: PCG iterations for solving a system associated with ${\mathbf{M}}_{\epsilon}$ in the multi-patch domains inspired by the ITER tokamak, as shown in Figure~\ref{fig:ITER_mp}, using the continuous Galerkin formulation. Entries marked with $*$ indicate cases where the problem exceeded the capacity of the available computational resources.
	}
	\label{tab:ITERvolMpE}
\end{table}
\tikzstyle{Linea1}=[thick,dashed]
\tikzstyle{Linea2}=[thick]
\pgfplotscreateplotcyclelist{Lista1}{%
	{Linea2,red,mark=*},
	{Linea1,red,mark=*},
	{Linea2,green,mark=triangle*},
	{Linea1,green,mark=triangle*},
	{Linea2,cyan,mark=square*},
	{Linea1,cyan,mark=square*},
	{Linea2,violet,mark=diamond*},
	{Linea1,violet,mark=diamond*},
}
\def \tabBDdu{tables/bdPerfITERvol2slE.csv}
\def \tabBKdu{tables/fdPerfITERvol2slE.csv}
\def \tabBDt{tables/bdPerfITERvol3slE.csv}
\def \tabBKt{tables/fdPerfITERvol3slE.csv}
\def \tabBDq{tables/bdPerfITERvol4slE.csv}
\def \tabBKq{tables/fdPerfITERvol4slE.csv}
\def \tabBDc{tables/bdPerfITERvol5slE.csv}
\def \tabBKc{tables/fdPerfITERvol5slE.csv}

\def\HeiFact{.78}
\begin{figure}[p]
	\centering 
	\hspace*{\fill}
	\begin{subfigure}[t]{0.4\linewidth}
		\centering
		\begin{tikzpicture}[trim axis left] 
			\begin{loglogaxis}
				[cycle list name=Lista1,
				width=\linewidth,
				height=\HeiFact\linewidth,
				xminorticks=false, yminorticks=false,
				ylabel = {\fbox{2 sectors}},
				title={Setup time [s]},
				legend columns=1,
				legend style={nodes={scale=0.85, transform shape},at={(0.01,0.99)},anchor=north west},
				xmajorgrids=true, ymajorgrids=true,
				ymin=5e-2, ymax=3e3, 
				xmin=3e3,
				xmax=9e7,
				legend style={font=\small},
				legend entries={$p=2$,,$p=3$,,$p=4$,}]
				\addplot table [x=IDofsEp2, y=TsetEp2, col sep=comma]{\tabBDdu};
				\addplot table [x=IDofsEp2, y=TsetEp2, col sep=comma]{\tabBKdu};
				\addplot table [x=IDofsEp3, y=TsetEp3, col sep=comma]{\tabBDdu};
				\addplot table [x=IDofsEp3, y=TsetEp3, col sep=comma]{\tabBKdu};
				\addplot table [x=IDofsEp4, y=TsetEp4, col sep=comma]{\tabBDdu};			
				\addplot table [x=IDofsEp4, y=TsetEp4, col sep=comma]{\tabBKdu};
				\logLogSlopeTriangle{0.85}{0.4}{0.11}{1/3}{black};
				\logLogSlopeTriangle{0.85}{0.2}{0.6}{1}{black};
			\end{loglogaxis}
		\end{tikzpicture}
	\end{subfigure}\hfill
	\begin{subfigure}[t]{0.4\linewidth}
		\centering
		\begin{tikzpicture}[
			trim axis right]
			\begin{loglogaxis}[
				cycle list name=Lista1,
				width=\linewidth, 
				height=\HeiFact\linewidth,
				xminorticks=false, yminorticks=false,
				title={PCG time [s]},
				legend columns=1,
				legend style={nodes={scale=0.85, transform shape},at={(0.01,0.99)},anchor=north west},
				xmajorgrids=true, ymajorgrids=true,
				ymin=5e-2, ymax=3e3,
				xmin=3e3,
				xmax=9e7,
				legend style={font=\small},
				legend entries={$p=2$,,$p=3$,,$p=4$,}]
				\addplot table [x=IDofsEp2, y=TprecEp2, col sep=comma]{\tabBDdu};
				\addplot table [x=IDofsEp2, y=TprecEp2, col sep=comma]{\tabBKdu};
				\addplot table [x=IDofsEp3, y=TprecEp3, col sep=comma]{\tabBDdu};
				\addplot table [x=IDofsEp3, y=TprecEp3, col sep=comma]{\tabBKdu};
				\addplot table [x=IDofsEp4, y=TprecEp4, col sep=comma]{\tabBDdu};			
				\addplot table [x=IDofsEp4, y=TprecEp4, col sep=comma]{\tabBKdu};
				\logLogSlopeTriangle{0.85}{0.2}{0.5}{1}{black};
			\end{loglogaxis}
		\end{tikzpicture}
	\end{subfigure}
	\hspace*{\fill}
	\vspace{0.3cm}
	\\
	\hspace*{\fill}
	\begin{subfigure}[t]{0.4\linewidth}
		\centering
		\begin{tikzpicture}[
			trim axis left]
			\begin{loglogaxis}[
				cycle list name=Lista1,
				width=\linewidth,
				height=\HeiFact\linewidth,
				xminorticks=false, yminorticks=false,
				ylabel = {\fbox{3 sectors}},
				xmajorgrids=true, ymajorgrids=true,
				ymin=5e-2, ymax=3e3,
				xmin=3e3,
				xmax=9e7,
				]
				\addplot table [x=IDofsEp2, y=TsetEp2, col sep=comma]{\tabBDt};
				\addplot table [x=IDofsEp2, y=TsetEp2, col sep=comma]{\tabBKt};
				\addplot table [x=IDofsEp3, y=TsetEp3, col sep=comma]{\tabBDt};
				\addplot table [x=IDofsEp3, y=TsetEp3, col sep=comma]{\tabBKt};
				\addplot table [x=IDofsEp4, y=TsetEp4, col sep=comma]{\tabBDt};			
				\addplot table [x=IDofsEp4, y=TsetEp4, col sep=comma]{\tabBKt};
				\logLogSlopeTriangle{0.85}{0.4}{0.11}{1/3}{black};
				\logLogSlopeTriangle{0.85}{0.2}{0.6}{1}{black};
			\end{loglogaxis}
		\end{tikzpicture}
	\end{subfigure}\hfill
	\begin{subfigure}[t]{0.4\linewidth}
		\centering
		\begin{tikzpicture}[
			trim axis right]
			\begin{loglogaxis}[
				cycle list name=Lista1,
				width=\linewidth,
				height=\HeiFact\linewidth,
				xminorticks=false, yminorticks=false,
				xmajorgrids=true, ymajorgrids=true,
				ymin=5e-2, ymax=3e3,
				xmin=3e3,
				xmax=9e7, 
				]
				\addplot table [x=IDofsEp2, y=TprecEp2, col sep=comma]{\tabBDt};
				\addplot table [x=IDofsEp2, y=TprecEp2, col sep=comma]{\tabBKt};
				\addplot table [x=IDofsEp3, y=TprecEp3, col sep=comma]{\tabBDt};
				\addplot table [x=IDofsEp3, y=TprecEp3, col sep=comma]{\tabBKt};
				\addplot table [x=IDofsEp4, y=TprecEp4, col sep=comma]{\tabBDt};			
				\addplot table [x=IDofsEp4, y=TprecEp4, col sep=comma]{\tabBKt};
				\logLogSlopeTriangle{0.85}{0.2}{0.5}{1}{black};
			\end{loglogaxis}
		\end{tikzpicture}
	\end{subfigure}
	\hspace*{\fill}
	\vspace{0.3cm}
	\\
	\hspace*{\fill}
	\begin{subfigure}[t]{0.4\linewidth}
		\centering
		\begin{tikzpicture}[
			trim axis left]
			\begin{loglogaxis}[
				cycle list name=Lista1,
				width=\linewidth,
				height=\HeiFact\linewidth,
				xlabel={$\ndof$},
				xminorticks=false, yminorticks=false,
				xmin=3e3,
				xmax=9e7,
				ylabel = {\fbox{4 sectors}},
				xmajorgrids=true, ymajorgrids=true,
				ymin=5e-2, ymax=3e3,
				]
				\addplot table [x=IDofsEp2, y=TsetEp2, col sep=comma]{\tabBDq};
				\addplot table [x=IDofsEp2, y=TsetEp2, col sep=comma]{\tabBKq};
				\addplot table [x=IDofsEp3, y=TsetEp3, col sep=comma]{\tabBDq};
				\addplot table [x=IDofsEp3, y=TsetEp3, col sep=comma]{\tabBKq};
				\addplot table [x=IDofsEp4, y=TsetEp4, col sep=comma]{\tabBDq};			
				\addplot table [x=IDofsEp4, y=TsetEp4, col sep=comma]{\tabBKq};
				\logLogSlopeTriangle{0.85}{0.4}{0.11}{1/3}{black};
				\logLogSlopeTriangle{0.85}{0.2}{0.6}{1}{black};
			\end{loglogaxis}
		\end{tikzpicture}
	\end{subfigure}\hfill
	\begin{subfigure}[t]{0.4\linewidth}
		\centering
		\begin{tikzpicture}[
			trim axis right]
			\begin{loglogaxis}[
				cycle list name=Lista1,
				width=\linewidth, height=\HeiFact\linewidth,
				xlabel={$\ndof$},
				xminorticks=false, yminorticks=false,
				xmajorgrids=true, ymajorgrids=true,
				xmin=3e3,
				xmax=9e7,
				ymin=5e-2, ymax=3e3,
				]
				\addplot table [x=IDofsEp2, y=TprecEp2, col sep=comma]{\tabBDq};
				\addplot table [x=IDofsEp2, y=TprecEp2, col sep=comma]{\tabBKq};
				\addplot table [x=IDofsEp3, y=TprecEp3, col sep=comma]{\tabBDq};
				\addplot table [x=IDofsEp3, y=TprecEp3, col sep=comma]{\tabBKq};
				\addplot table [x=IDofsEp4, y=TprecEp4, col sep=comma]{\tabBDq};			
				\addplot table [x=IDofsEp4, y=TprecEp4, col sep=comma]{\tabBKq};
				\logLogSlopeTriangle{0.85}{0.2}{0.5}{1}{black};
			\end{loglogaxis}
		\end{tikzpicture}
	\end{subfigure}
	\hspace*{\fill}
	\vspace{0.3cm}
	\\
	\hspace*{\fill}
	\begin{subfigure}[t]{0.4\linewidth}
		\centering
		\begin{tikzpicture}[
			trim axis left]
			\begin{loglogaxis}[
				cycle list name=Lista1,
				width=\linewidth,
				height=\HeiFact\linewidth,
				xlabel={$\ndof$},
				xminorticks=false, yminorticks=false,
				xmin=3e3,
				xmax=9e7,
				ylabel = {\fbox{5 sectors}},
				xmajorgrids=true, ymajorgrids=true,
				ymin=5e-2, ymax=3e3,
				]
				\addplot table [x=IDofsEp2, y=TsetEp2, col sep=comma]{\tabBDc};
				\addplot table [x=IDofsEp2, y=TsetEp2, col sep=comma]{\tabBKc};
				\addplot table [x=IDofsEp3, y=TsetEp3, col sep=comma]{\tabBDc};
				\addplot table [x=IDofsEp3, y=TsetEp3, col sep=comma]{\tabBKc};
				\addplot table [x=IDofsEp4, y=TsetEp4, col sep=comma]{\tabBDc};			
				\addplot table [x=IDofsEp4, y=TsetEp4, col sep=comma]{\tabBKc};
				\logLogSlopeTriangle{0.85}{0.4}{0.11}{1/3}{black};
				\logLogSlopeTriangle{0.85}{0.2}{0.6}{1}{black};
			\end{loglogaxis}
		\end{tikzpicture}
	\end{subfigure}\hfill
	\begin{subfigure}[t]{0.4\linewidth}
		\centering
		\begin{tikzpicture}[
			trim axis right]
			\begin{loglogaxis}[
				cycle list name=Lista1,
				width=\linewidth, height=\HeiFact\linewidth,
				xlabel={$\ndof$},
				xminorticks=false, yminorticks=false,
				xmajorgrids=true, ymajorgrids=true,
				xmin=3e3,
				xmax=9e7,
				ymin=5e-2, ymax=3e3,
				]
				\addplot table [x=IDofsEp2, y=TprecEp2, col sep=comma]{\tabBDc};
				\addplot table [x=IDofsEp2, y=TprecEp2, col sep=comma]{\tabBKc};
				\addplot table [x=IDofsEp3, y=TprecEp3, col sep=comma]{\tabBDc};
				\addplot table [x=IDofsEp3, y=TprecEp3, col sep=comma]{\tabBKc};
				\addplot table [x=IDofsEp4, y=TprecEp4, col sep=comma]{\tabBDc};			
				\addplot table [x=IDofsEp4, y=TprecEp4, col sep=comma]{\tabBKc};
				\logLogSlopeTriangle{0.85}{0.2}{0.5}{1}{black};
			\end{loglogaxis}
		\end{tikzpicture}
	\end{subfigure}
	\hspace*{\fill}
	\caption{Example 2: time required to solve a system associated with $\mathbf{M}_{\epsilon}$ in the multi-patch domains inspired by the ITER tokamak, as shown in Figure~\ref{fig:ITER_mp}, using the continuous Galerkin formulation. Results are presented for $\overline{\mathbf{P}}_{\mathbf{I}}$ (solid lines \rule[0.5ex]{0.45cm}{0.5pt}) and $\overline{\mathbf{P}}_{\overline{\mathcal{C}}}$ (dashed lines \rule[0.5ex]{0.15cm}{0.5pt}~\rule[0.5ex]{0.15cm}{0.5pt}~\rule[0.5ex]{0.15cm}{0.4pt}).
	}
	\label{fig:ITERvolMpE}
\end{figure}
\subsection{Example 3: Discontinuous Galerkin preconditioners for the ITER tokamak}
This example evaluates the performance of $\overline{\mathbf{P}}_{\mathbf{I}}$ and $\overline{\mathbf{P}}_{\overline{\mathcal{C}}}$ within the symmetric discontinuous Galerkin framework, applied to the multi-patch domain inspired by the ITER tokamak with five sectors (see Figure~\ref{fig:iterVol5}).

Table~\ref{tab:ITERvolMpDGE} reports the maximum number of PCG iterations across all patches. The results show that both preconditioners perform very well, demonstrating robustness with respect to both the spline degree and the number of subdivisions, in full agreement with theoretical expectations. However, $\overline{\mathbf{P}}_{\overline{\mathcal{C}}}$ consistently outperforms $\overline{\mathbf{P}}_{\mathbf{I}}$, requiring fewer PCG iterations overall.
Finally, Figure~\ref{fig:ITERvolMpDGE} reports the corresponding wall-clock times. The trends observed here are consistent with those expected and discussed in the preceding examples.
\pgfplotstableread[col sep=comma]{tables/bdPerfDGITERvol5slE.csv}\tabBD
\pgfplotstableread[col sep=comma]{tables/fdPerfDGITERvol5slE.csv}\tabBK
\begin{table}[htbp]
	\centering
	\pgfplotstabletypeset[columns={nsub,ItsEp2,ItsEp3,ItsEp4}, 
	column type=c, 
	every head row/.style={output empty row,before row={\toprule \multirow{2}{*}{$n_{\text{sub}}$} & \multicolumn{3}{c}{$\overline{\mathbf{P}}_{\mathbf{I}}$}\\ \cmidrule(lr){2-4} & $p=2$ & $p=3$ & $p=4$\\}, after row=\midrule},
	every last row/.style={after row=\bottomrule},
	columns/nsub/.style ={column name={}},
	columns/ItsEp2/.style ={column name={},column type=r},
	columns/ItsEp3/.style ={column name={},column type=r,string type},
	columns/ItsEp4/.style ={column name={},column type=r,string type}
	]\tabBD
	\pgfplotstabletypeset[columns={ItsEp2,ItsEp3,ItsEp4}, 
	column type=c, 
	every head row/.style={output empty row,before row={\toprule \multicolumn{3}{c}{$\overline{\mathbf{P}}_{\overline{\mathcal{C}}}$}\\ \cmidrule(lr){1-3} $p=2$ & $p=3$ & $p=4$\\}, after row=\midrule},
	every last row/.style={after row=\bottomrule},
	columns/ItsEp2/.style ={column name={},column type=r},
	columns/ItsEp3/.style ={column name={},column type=r,string type},
	columns/ItsEp4/.style ={column name={},column type=r,string type}
	]\tabBK
	\caption{Example 3: maximum number of PCG iterations across all patches for solving a system associated with ${\mathbf{M}}_{\epsilon}$ in the multi-patch domain inspired by the ITER tokamak with 5 sectors, as shown in Figure~\ref{fig:iterVol5}, using the symmetric discontinuous Galerkin formulation. Entries marked with $*$ indicate cases where the problem exceeded the capacity of the available computational resources.}
	\label{tab:ITERvolMpDGE}
\end{table}
\tikzstyle{Linea1}=[thick,dashed]
\tikzstyle{Linea2}=[thick]
\pgfplotscreateplotcyclelist{Lista1rs}{%
	{Linea2,red,mark=*},
	{Linea1,red,mark=*},
	{Linea2,green,mark=triangle*},
	{Linea1,green,mark=triangle*},
	{Linea2,cyan,mark=square*},
	{Linea1,cyan,mark=square*},
	{Linea2,violet,mark=diamond*},
	{Linea1,violet,mark=diamond*},
}
\def \tabBD{tables/bdPerfDGITERvol5sl.csv}
\def \tabBK{tables/fdPerfDGITERvol5sl.csv}
\begin{figure}[htbp]
	\centering
	\hspace*{\fill}
	\begin{subfigure}[t]{0.4\linewidth}
		\centering
		\begin{tikzpicture}[
			trim axis left
			]
			\begin{loglogaxis}[
				cycle list name=Lista1rs,
				width=\linewidth,
				height=\linewidth,
				xlabel={$\ndof$},
				ymin=1e-1,
				ymax=2e3,
				xminorticks=false,
				yminorticks=false,
				ylabel={Setup time [s]},
				legend columns=1,
				legend style={at={(0.01,0.99)},anchor=north west},
				xmajorgrids=true,
				ymajorgrids=true,
				legend entries={$p=2$,,$p=3$,,$p=4$,}
				]
				\addplot table [x=IDofsEp2, y=TsetEp2, col sep=comma]{\tabBD};
				\addplot table [x=IDofsEp2, y=TsetEp2, col sep=comma]{\tabBK};
				\addplot table [x=IDofsEp3, y=TsetEp3, col sep=comma]{\tabBD};
				\addplot table [x=IDofsEp3, y=TsetEp3, col sep=comma]{\tabBK};
				\addplot table [x=IDofsEp4, y=TsetEp4, col sep=comma]{\tabBD};			
				\addplot table [x=IDofsEp4, y=TsetEp4, col sep=comma]{\tabBK};
				\logLogSlopeTriangle{0.85}{0.5}{0.1}{1/3}{black};
				\logLogSlopeTriangle{0.85}{0.2}{0.6}{1}{black};
			\end{loglogaxis}
		\end{tikzpicture}
	\end{subfigure}
	\hfill
	\begin{subfigure}[t]{0.4\linewidth}
		\centering
		\begin{tikzpicture}[
			trim axis right
			]
			\begin{loglogaxis}[
				cycle list name=Lista1rs,
				width=\linewidth,
				height=\linewidth,
				xlabel={$\ndof$},
				ymin=1e-1,
				ymax=2e3,
				xminorticks=false,
				yminorticks=false,
				ylabel={PCG time [s]},
				legend columns=1,
				legend style={at={(0.01,0.99)},anchor=north west},
				xmajorgrids=true,
				ymajorgrids=true,
				legend entries={$p=2$,,$p=3$,,$p=4$,}
				]
				\addplot table [x=IDofsEp2, y=TprecEp2, col sep=comma] {\tabBD};
				\addplot table [x=IDofsEp2, y=TprecEp2, col sep=comma] {\tabBK};
				\addplot table [x=IDofsEp3, y=TprecEp3, col sep=comma] {\tabBD};
				\addplot table [x=IDofsEp3, y=TprecEp3, col sep=comma] {\tabBK};
				\addplot table [x=IDofsEp4, y=TprecEp4, col sep=comma] {\tabBD};				
				\addplot table [x=IDofsEp4, y=TprecEp4, col sep=comma] {\tabBK};
				\logLogSlopeTriangle{0.85}{0.2}{0.45}{1}{black};
			\end{loglogaxis}
		\end{tikzpicture}
	\end{subfigure}
	\hspace*{\fill}
	\caption{Example 3: time required to solve a system associated with $\mathbf{M}_{\epsilon}^1$ in the multi-patch domain inspired by the ITER tokamak with 5 sectors, as shown in Figure~\ref{fig:iterVol5}, using the symmetric discontinuous Galerkin formulation. Results are presented for $\overline{\mathbf{P}}_{\mathbf{I}}$ (solid lines \rule[0.5ex]{0.45cm}{0.5pt}) and $\overline{\mathbf{P}}_{\overline{\mathcal{C}}}$ (dashed lines \rule[0.5ex]{0.15cm}{0.5pt}~\rule[0.5ex]{0.15cm}{0.5pt}~\rule[0.5ex]{0.15cm}{0.4pt}).}
	\label{fig:ITERvolMpDGE}
\end{figure}
\subsection{Example 4: Discontinuous Galerkin preconditioners for the twisted cable}
In this second test within the symmetric discontinuous Galerkin framework, we evaluate the preconditioners on a multi-patch twisted cable domain, illustrated in Figure~\ref{fig:cable}. The three-dimensional geometry is obtained by extruding the two-dimensional cross-section, shown in Figure~\ref{fig:cableSec}, along a helical path. The helix completes a total rotation of $\pi/4$ radians and extends to a height of \num{1}. Notably, the extrusion path is not perpendicular to the sweeping cross-section. Consequently, the geometric mappings do not satisfy the orthogonality condition for the derivative in the third parametric direction with respect to the other two, which would be favorable for the block Kronecker preconditioner, see Section~\ref{sec:fd_prec}
\begin{figure}[htbp]
	\centering
	\begin{subfigure}{0.49\textwidth}
		\centering
		\includegraphics[height=0.5\textwidth]{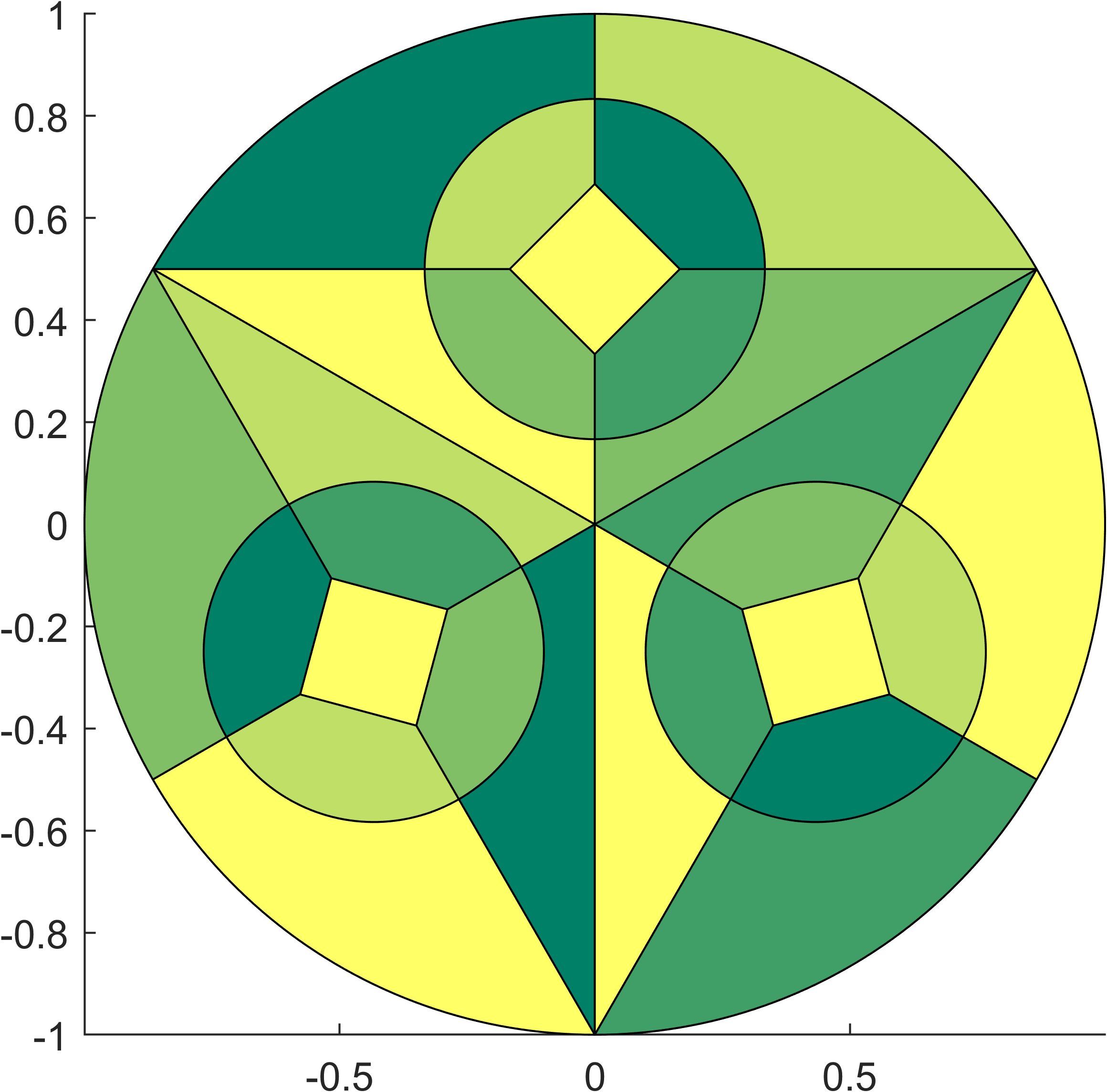}
		\caption{}
		\label{fig:cableSec}
	\end{subfigure}
	\hfill
	\begin{subfigure}{0.49\textwidth}
		\centering
		\includegraphics[height=0.5\textwidth]{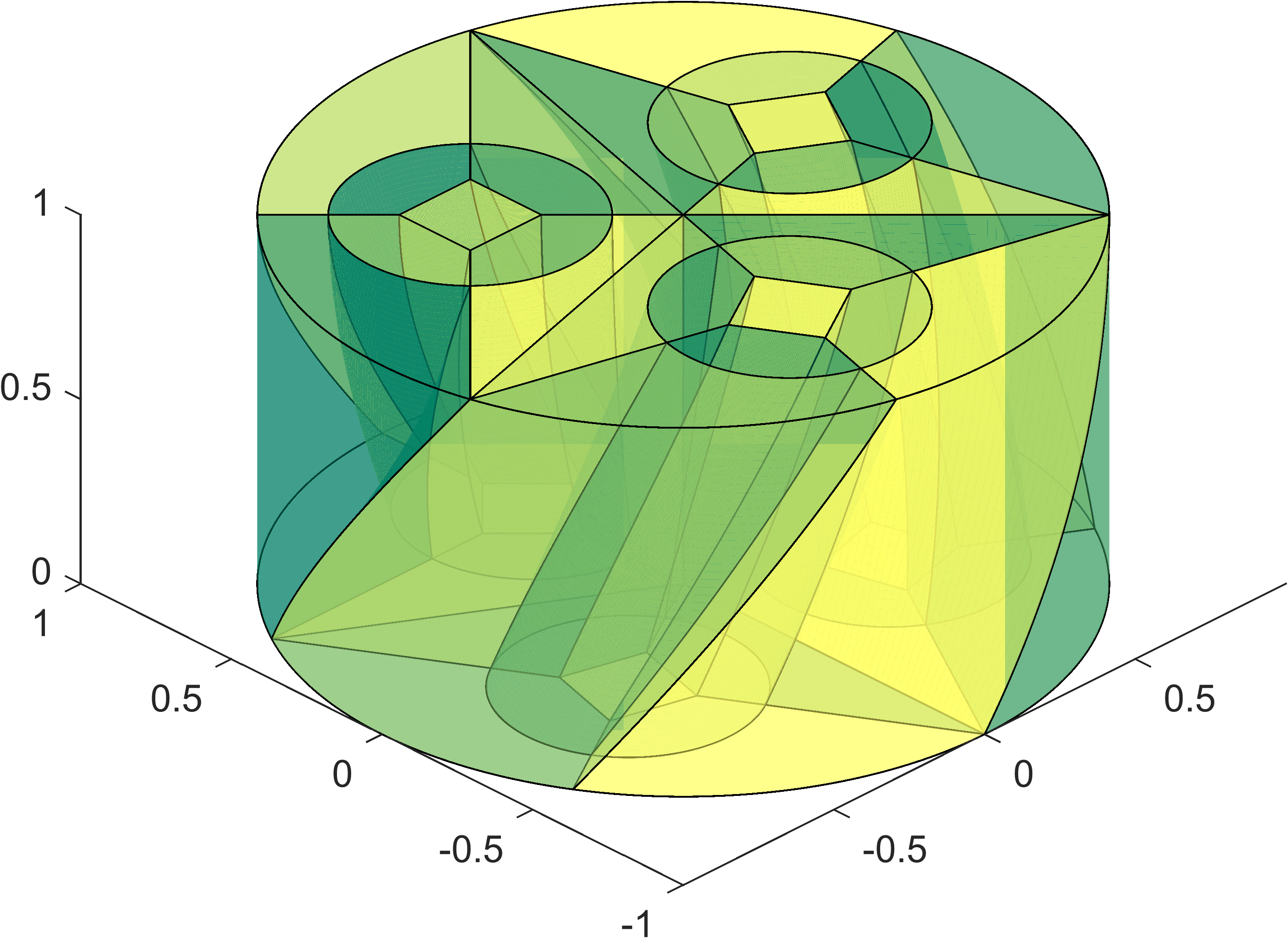}
		\caption{}
	\end{subfigure}
	\caption{Multi-patch twisted cable domain: (a) patch representation in section view, (b) 3D representation.}
	\label{fig:cable}
\end{figure}

Table~\ref{tab:cableE} reports the maximum number of PCG iterations across all patches for this domain. Both preconditioners demonstrate robust performance and yield very similar iteration counts. Since $\overline{\mathbf{P}}_{\overline{\mathcal{C}}}$ is specifically designed for orthogonal geometries, its iteration count is slightly higher in this example than in previous cases where the orthogonality assumption holds. Nevertheless, it still retains a slight advantage over $\overline{\mathbf{P}}_{\mathbf{I}}$. 
We note that the largest problem size considered entails approximately $22$ million degrees of freedom.
Finally, Figure~\ref{fig:cableE} presents the wall-clock times, which exhibit trends consistent with those observed in the previous test cases.
\pgfplotstableread[col sep=comma]{tables/bdPerfCableE.csv}\tabBD
\pgfplotstableread[col sep=comma]{tables/fdPerfCableE.csv}\tabBK
\begin{table}[htbp]
\centering
\pgfplotstabletypeset[columns={nsub,ItsEp2,ItsEp3,ItsEp4}, 
column type=c, 
every head row/.style={output empty row,before row={\toprule \multirow{2}{*}{$n_{\text{sub}}$} & \multicolumn{3}{c}{$\overline{\mathbf{P}}_{\mathbf{I}}$}\\ \cmidrule(lr){2-4} & $p=2$ & $p=3$ & $p=4$\\}, after row=\midrule},
every last row/.style={after row=\bottomrule},
columns/nsub/.style ={column name={}},
columns/ItsEp2/.style ={column name={},column type=r},
columns/ItsEp3/.style ={column name={},column type=r,string type},
columns/ItsEp4/.style ={column name={},column type=r,string type}
]\tabBD
\pgfplotstabletypeset[columns={ItsEp2,ItsEp3,ItsEp4}, 
column type=c, 
every head row/.style={output empty row,before row={\toprule \multicolumn{3}{c}{$\overline{\mathbf{P}}_{\overline{\mathcal{C}}}$}\\ \cmidrule(lr){1-3} $p=2$ & $p=3$ & $p=4$\\}, after row=\midrule},
every last row/.style={after row=\bottomrule},
columns/ItsEp2/.style ={column name={},column type=r},
columns/ItsEp3/.style ={column name={},column type=r,string type},
columns/ItsEp4/.style ={column name={},column type=r,string type}
]\tabBK
\caption{Example 4: maximum number of PCG iterations across all patches for solving a system associated with ${\mathbf{M}}_{\epsilon}$ in the multi-patch twisted cable domain, as shown in Figure~\ref{fig:cable}, using the symmetric discontinuous Galerkin formulation. Entries marked with $*$ indicate cases where the problem exceeded the capacity of the available computational resources.}
\label{tab:cableE}
\end{table}
\tikzstyle{Linea1}=[thick,dashed]
\tikzstyle{Linea2}=[thick]
\pgfplotscreateplotcyclelist{Lista1rs}{%
	{Linea2,red,mark=*},
	{Linea1,red,mark=*},
	{Linea2,green,mark=triangle*},
	{Linea1,green,mark=triangle*},
	{Linea2,cyan,mark=square*},
	{Linea1,cyan,mark=square*},
	{Linea2,violet,mark=diamond*},
	{Linea1,violet,mark=diamond*},
}
\def \tabBD{tables/bdPerfCableE.csv}
\def \tabBK{tables/fdPerfCableE.csv}
\begin{figure}[htbp]
	\centering
	\hspace*{\fill}
	\begin{subfigure}[t]{0.4\linewidth}
		\centering
		\begin{tikzpicture}[
			trim axis left
			]
			\begin{loglogaxis}[
				cycle list name=Lista1rs,
				width=\linewidth,
				height=\linewidth,
				xlabel={$\ndof$},
				ymin=1e-1,
				ymax=1e3,
				xminorticks=false,
				yminorticks=false,
				ylabel={Setup time [s]},
				legend columns=1,
				legend style={at={(0.01,0.99)},anchor=north west},
				xmajorgrids=true,
				ymajorgrids=true,
				legend entries={$p=2$,,$p=3$,,$p=4$,}
				]
				\addplot table [x=IDofsEp2, y=TsetEp2, col sep=comma]{\tabBD};
				\addplot table [x=IDofsEp2, y=TsetEp2, col sep=comma]{\tabBK};
				\addplot table [x=IDofsEp3, y=TsetEp3, col sep=comma]{\tabBD};
				\addplot table [x=IDofsEp3, y=TsetEp3, col sep=comma]{\tabBK};
				\addplot table [x=IDofsEp4, y=TsetEp4, col sep=comma]{\tabBD};			
				\addplot table [x=IDofsEp4, y=TsetEp4, col sep=comma]{\tabBK};
				\logLogSlopeTriangle{0.85}{0.5}{0.1}{1/3}{black};
				\logLogSlopeTriangle{0.85}{0.2}{0.55}{1}{black};
			\end{loglogaxis}
		\end{tikzpicture}
	\end{subfigure}
	\hfill
	\begin{subfigure}[t]{0.4\linewidth}
		\centering
		\begin{tikzpicture}[
			trim axis right
			]
			\begin{loglogaxis}[
				cycle list name=Lista1rs,
				width=\linewidth,
				height=\linewidth,
				xlabel={$\ndof$},
				ymin=1e-1,
				ymax=1e3,
				xminorticks=false,
				yminorticks=false,
				ylabel={PCG time [s]},
				legend columns=1,
				legend style={at={(0.01,0.99)},anchor=north west},
				xmajorgrids=true,
				ymajorgrids=true,
				legend entries={$p=2$,,$p=3$,,$p=4$,}
				]
				\addplot table [x=IDofsEp2, y=TprecEp2, col sep=comma] {\tabBD};
				\addplot table [x=IDofsEp2, y=TprecEp2, col sep=comma] {\tabBK};
				\addplot table [x=IDofsEp3, y=TprecEp3, col sep=comma] {\tabBD};
				\addplot table [x=IDofsEp3, y=TprecEp3, col sep=comma] {\tabBK};
				\addplot table [x=IDofsEp4, y=TprecEp4, col sep=comma] {\tabBD};				
				\addplot table [x=IDofsEp4, y=TprecEp4, col sep=comma] {\tabBK};
				\logLogSlopeTriangle{0.85}{0.2}{0.45}{1}{black};
			\end{loglogaxis}
		\end{tikzpicture}
	\end{subfigure}
	\hspace*{\fill}
	\caption{Example 4: time required to solve a system associated with $\mathbf{M}_{\epsilon}^1$ in the multi-patch twisted cable domain as shown in Figure~\ref{fig:cable}, using the symmetric discontinuous Galerkin formulation. Results are presented for $\mathbf{P}_{\mathbf{I}}$ (solid lines \rule[0.5ex]{0.45cm}{0.5pt}) and $\mathbf{P}_{\overline{\mathcal{C}}}$ (dashed lines \rule[0.5ex]{0.15cm}{0.5pt}~\rule[0.5ex]{0.15cm}{0.5pt}~\rule[0.5ex]{0.15cm}{0.4pt}).}
	\label{fig:cableE}
\end{figure}
\section{Conclusions} \label{sec:conclusions}
In this paper, we introduced a preconditioning strategy for the mass matrix in spaces of 1-forms, which arises in the explicit time integration of Maxwell's equations. We first analyzed a class of spectrally equivalent preconditioners in the context of a single-patch domain, proposing two distinct approaches that are both robust and computationally efficient. We then extended these preconditioners to the multi-patch setting by incorporating them as local solvers within an additive Schwarz framework. This extension ensures that the approach remains efficient, robust, and scalable with respect to the total number of patches.

Moreover, the inherent structure of the single-patch preconditioners, combined with the additive Schwarz framework, facilitates a highly parallel implementation, significantly improving the efficiency of the preconditioning techniques. To validate our approach, we conducted numerical experiments on realistic problems, which confirmed the theoretical findings and demonstrated the practical effectiveness of the proposed methods.

\section*{Acknowledgements}
G. Loli, G. Sangalli, and M. Tani are members of the Gruppo Nazionale Calcolo Scientifico-Istituto Nazionale di Alta Matematica (GNCS-INDAM).
G. Sangalli and M. Tani acknowledge the support of the Italian Ministry of University and Research (MUR) through the PRIN 2022 PNRR project NOTES (No. P2022NC97R), funded by the European Union - NextGenerationEU.
This research also received financial support from ICSC - the Italian Research Center on High-Performance Computing, Big Data, and Quantum Computing, funded by the European Union - NextGenerationEU. The work of Rafael Vázquez has been partially funded by Consellería de Educación, Ciencia, Universidades e Formación Profesional - Xunta de Galicia (2025-AD059 and ED431F 2025/03), and by the Spanish State Research Agency (PID2024-156071NB-I00).
\begin{figure}[H]
	{\centering
		\hfill\includegraphics[scale=0.075]{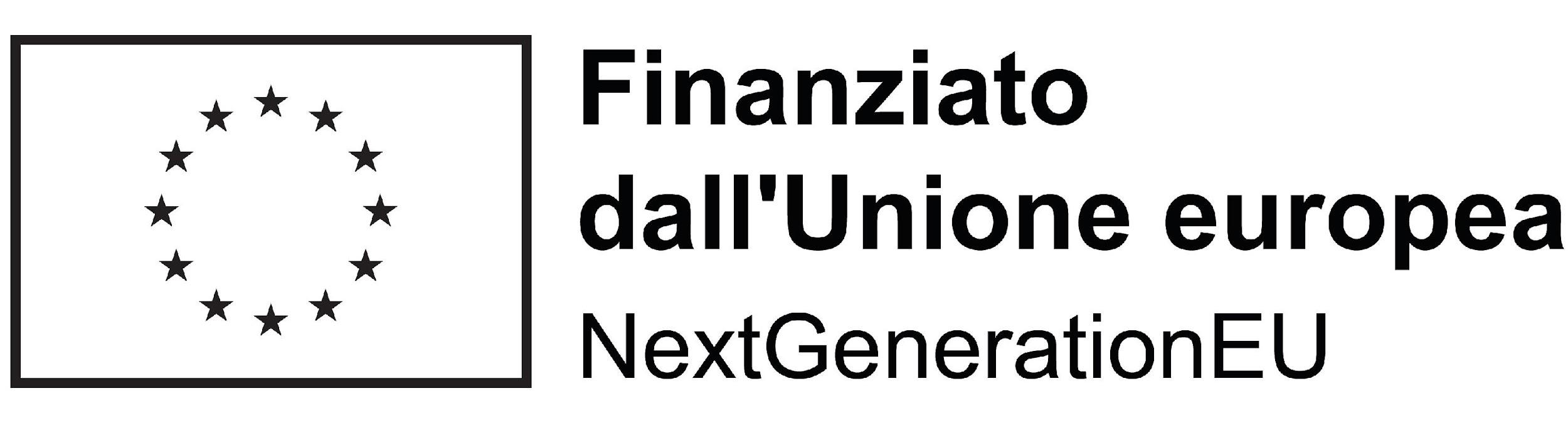}
		\hfill\includegraphics[scale=0.075]{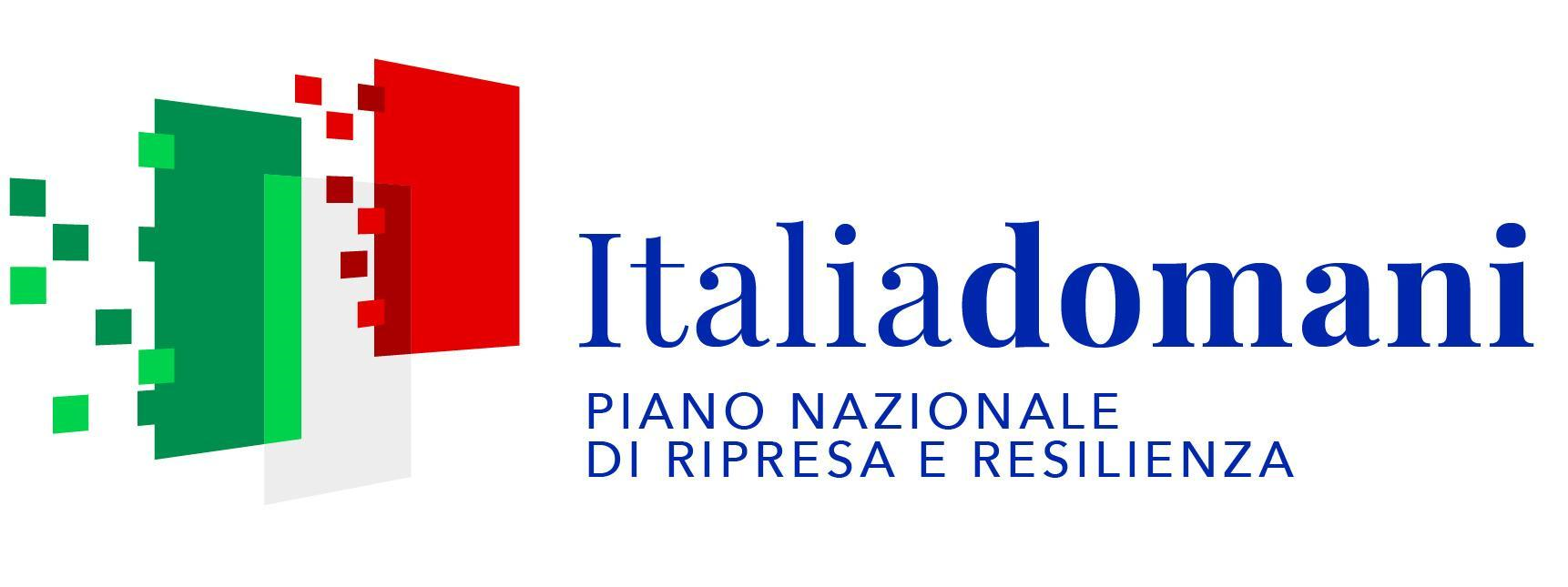}
		\hfill\includegraphics[scale=0.075]{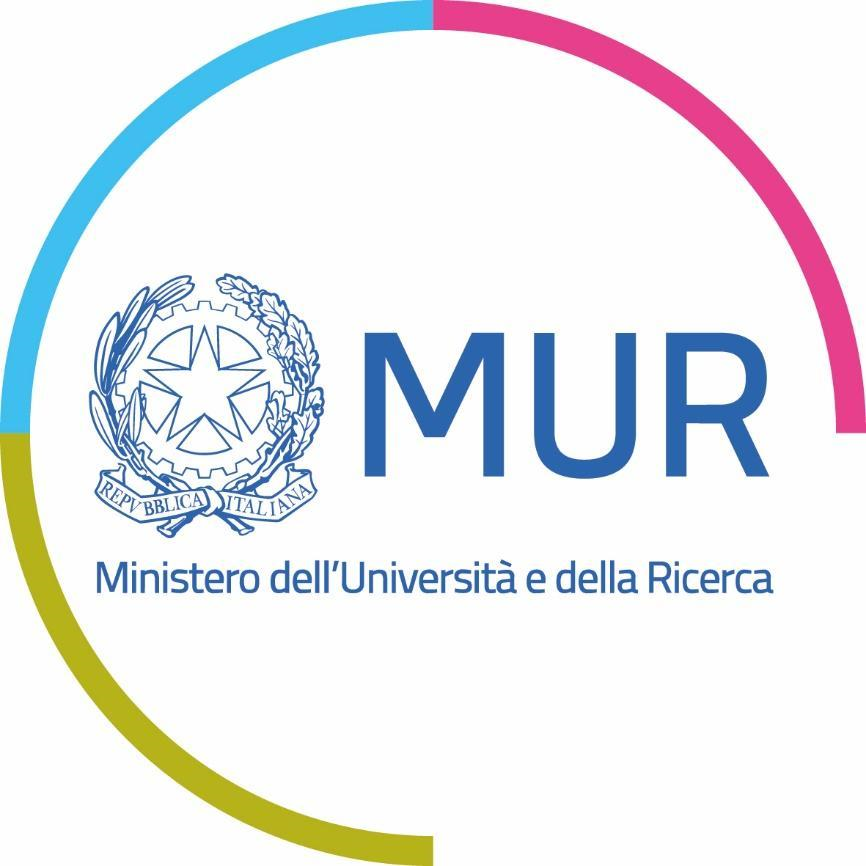}\hfill\mbox{}}
\end{figure}
\printbibliography
 \end{document}